\titleformat{\section}[block]{\large\center\sc}{\arabic{section}}{0.5em}{}[] %����section ��ʽ
\definecolor{teal}{RGB}{67,205,128}
\theoremstyle{plain}
\newtheorem{theorem}{Theorem}[section]
\newtheorem{lemma}[theorem]{Lemma}
\newtheorem{corollary}[theorem]{Corollary}
\newtheorem{proposition}[theorem]{Proposition}
\newtheorem{definition}[theorem]{Definition}
\newtheorem{condition}[theorem]{Condition}
\newtheorem{problem}[theorem]{Problem}
\newtheorem{remark}[theorem]{Remark}
\let\oldsection\section
\renewcommand\section{\setcounter{equation}{0}\oldsection}
\def\be{\begin{equation}}
\def\ee{\end{equation}}
\def\bes{\begin{equation*}}
\def\ees{\end{equation*}}
\def\bs{\begin{split}}
\def\es{\end{split}}
\def\bali{\begin{aligned}}
\def\eali{\end{aligned}}
\newcommand{\pf}{\noindent {\bf Proof. \hspace{2mm}}}
\def\bR{{\mathbb R}}
\def\un{\underbrace}
\def\al{\alpha}
\def\e{\epsilon}
\def\ve{\varepsilon}
\def\t{\tilde}
\def\th{\theta}
\def\dl{\delta}
\def\Dl{\Delta}
\def\lt{\left}
\def\rt{\right}
\def\ls{\lesssim}
\def\i{\infty}
\def\p{\partial}
\def\f{\frac}
\def\na{\nabla}
\def\o{\omega}
\def\O{\Omega}
\def\q{\quad}
\def\bl{\boldsymbol}
\def\mS{\mathbb{S}}
\def\mH{\mathcal{H}}
\def\mS{\mathcal{S}}
\def\mfS{\mathfrak{S}}
\begin{document}
\title{\bf\normalsize  CONSTRAINED LARGE SOLUTIONS TO LERAY'S PROBLEM IN A DISTORTED STRIP WITH THE NAVIER-SLIP BOUNDARY CONDITION}

% Constrained large solutions to Leray's problem in a distorted strip with the Navier-slip boundary condition
\author{\normalsize\sc Zijin Li, Xinghong Pan and Jiaqi Yang}

\date{}

\maketitle
% \vskip 0.2in

\begin{abstract}
 In this paper, we solve the Leray's problem for the stationary Navier-Stokes system in a 2D infinite distorted strip with the Navier-slip boundary condition. The existence, uniqueness, regularity and asymptotic behavior of the solution are investigated. Moreover, we discuss how the friction coefficient affects the well-posedness of the solution. Due to the validity of the Korn's inequality, all constants in each a priori estimate are independent of the friction coefficient. Thus our method is also valid for the total-slip and no-slip cases. The main novelty is that the total flux of the velocity can be relatively large (proportional to the {\it slip length}) when the friction coefficient of the Navier-slip boundary condition is small, which is essentially different from the 3D case.

\medskip

{\sc Keywords:} Stationary Navier-Stokes system, Navier-slip boundary condition, Leray's problem, Distorted strip.

{\sc Mathematical Subject Classification 2020:} 35Q35, 76D05

\end{abstract}

\tableofcontents

\section{Introduction}\label{SEC1}
Consider the Navier-Stokes equations
\be\label{NS}
\lt\{
\begin{aligned}
&\bl{u}\cdot\na \bl{u}+\na p-\Dl \bl{u}=0,\\
&\na\cdot \bl{u}=0,
\end{aligned}
\rt.\q \text{in}\q \mathcal{S}\subset \bR^2,
\ee
subject to the Navier-slip boundary condition:
\be\label{NBC}
\left\{
\begin{aligned}
&2(\mathbb{S}\bl{u}\cdot\boldsymbol{n})_{\mathrm{tan}}+\alpha \bl{u}_{\mathrm{tan}}=0,\\
&\bl{u}\cdot\boldsymbol{n}=0,\\
\end{aligned}
\right.\q\text{on}\q\p\mathcal{S}.
\ee
Here $\mathbb{S}\bl{u}=\frac{1}{2}\left(\nabla \bl{u}+\nabla^T \bl{u}\right)$ is the stress tensor, where $\nabla^T \bl{u}$ represent the transpose of $\nabla \bl{u}$, and $\boldsymbol{n}$ is the unit outer normal vector of $\p \mS$. For a vector field $\bl{v}$, we denote $\bl{v}_{\mathrm{tan}}$ its tangential part:
\[
\bl{v}_{\mathrm{tan}}:=\bl{v}-(\bl{v}\cdot\boldsymbol{n})\boldsymbol{n},
\]
and $\al\geq0$ in \eqref{NBC} stands for the friction coefficient which may depend on various elements, such as the property of the boundary and the viscosity of the fluid. When $\al\to0_+$, the boundary condition \eqref{NBC} turns to be the total Navier-slip boundary condition, while when $\al\to\infty$, the boundary condition \eqref{NBC} degenerates into the no-slip boundary condition $u\equiv 0$. In this paper, we consider the general case, which assumes $0\leq\al\leq +\i$.

The domain  $\mS$ is a two dimensional infinitely distorted smooth strip as follows.
\begin{figure}[H]\label{FIG0}
\centering
\includegraphics[scale=0.3]{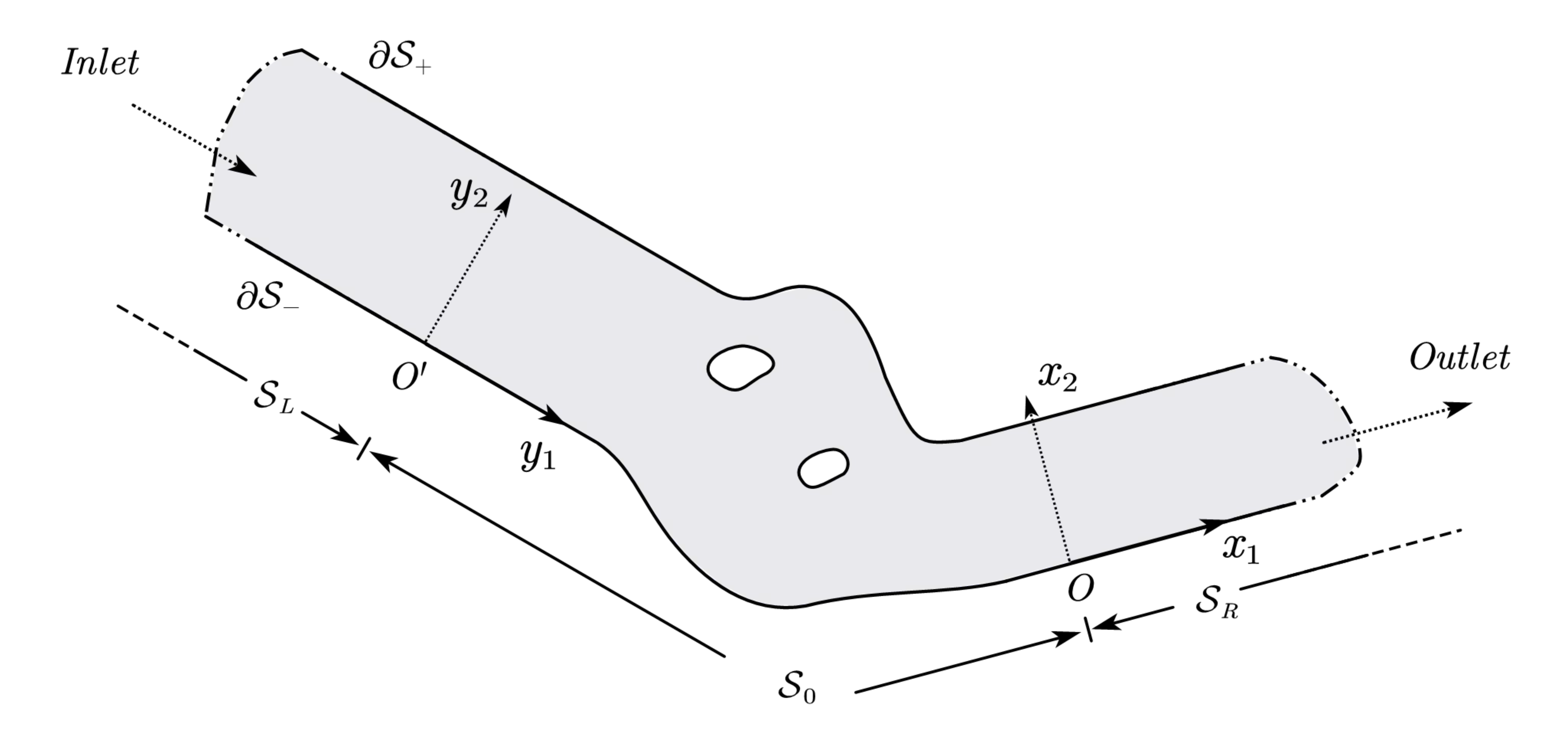}
\caption{The infinitely distorted strip $\mathcal{S}$.}
\end{figure}
Here, $\mathcal{S}_R$ and $\mathcal{S}_L$ are two semi-infinitely long straight strips. In the cartesian coordinate system $x_1Ox_2$, the strip
\[
\mathcal{S}_R:=\{x\in\mathbb{R}^2\,:\,(x_1,x_2)\in(0,\i)\times(0,1)\},
\]
while in the cartesian coordinate system $y_1O'y_2$, the stripe
\[
\mathcal{S}_L:=\{y\in\mathbb{R}^2\,:\,(y_1,y_2)\in(-\i,0)\times(0,c_0)\}.
\]
Here $c_0$ is a fixed constant. They are smoothly connected by the compact distorted part $\mathcal{S}_0$ in the middle. We do not insist the domain $\mathcal{S}$ to be simply connected, but all obstacles, with their boundaries are smooth Jordan curves, must lie in $\mathcal{S}_0$, and keep away from upper and lower boundaries of $\mathcal{S}$, i.e. $\p\mS_+$ and $\p\mS_-$, respectively.

Before stating our main results, we give some notations for later convenience.
\subsection*{Notations}

\q\ Throughout this paper, $C_{a,b,c,...}$ denotes a positive constant depending on $a,\,b,\, c,\,...$, which may be different from line to line. For simplicity, a constant $C_\mS$, depending on geometry properties of $\mS$, is usually abbreviated by $C$. Dependence on $\mS$ is default unless independence of $\mS$ is particularly stated. We use $\bl{e}_1,\,\bl{e}_2$ to denote the unit standard basis in the cartesian coordinate system $x_1Ox_2$, and $\bl{e}_1',\,\bl{e}_2'$ to denote the unit standard basis in the cartesian coordinate system $y_1O'y_2$. Meanwhile, for any $\zeta>1$, $\mathcal{S}_{R,\zeta}=(0,\zeta)\times(0,1)$ in the cartesian coordinate system $x_1Ox_2$, and $\mathcal{S}_{L,\zeta}=(-\zeta,0)\times(0,c_0)$ in the cartesian coordinate system $y_1O'y_2$. Then the truncated strip is defined by:
\be\label{Szeta}
\mS_\zeta:=\mathcal{S}_{L,\zeta}\cup\mathcal{S}_0\cup\mathcal{S}_{R,\zeta}.
\ee
We use $\Upsilon^{\pm}_\zeta$ to denote the right and left part of $\mathcal{S}_\zeta\backslash\mathcal{S}_{\zeta-1}$ as follows:
\[
\Upsilon^{+}_\zeta:=\mathcal{S}_{R,\zeta}\backslash\mathcal{S}_{R,\zeta-1},\q\q\Upsilon^{-}_\zeta:=\mathcal{S}_{L,\zeta}\backslash\mathcal{S}_{L,\zeta-1}.
\]
 We also apply $A\lesssim B$ to state $A\leq CB$. Moreover, $A\simeq B$ means both $A\lesssim B$ and $B\lesssim A$.

For $1\leq p\leq\infty$ and $k\in\mathbb{N}$, $L^p$ denotes the usual Lebesgue space with norm
\[
\|f\|_{L^p(D)}:=
\lt\{
\begin{aligned}
&\left(\int_{D}|f(x)|^pdx\right)^{1/p},\quad &1\leq p<\infty,\\[3mm]
&\mathrm{ess sup}_{x\in D}|f(x)|,\quad &p=\infty,\\
\end{aligned}
\rt.
\]
while $W^{k,p}$ denotes the usual Sobolev space with its norm
\[
\begin{split}
\|f\|_{W^{k,p}(D)}:=&\sum_{0\leq|L|\leq k}\|\nabla^L f\|_{L^p(D)},\\
\end{split}
\]
where $L=(l_1,l_2)$ is a multi-index. We also simply denote $W^{k,p}$ by $H^k$ provided $p=2$. Finally, $\overline{D}$ denote the closure of a domain $D$. A function $g\in W^{k,p}_{\mathrm{loc}}(D)$ or $W^{k,p}_{\mathrm{loc}}(\overline{D})$ function means $g\in W^{k,p}(\tilde{D})$, for any $\tilde{D}$ compactly contained in $D$ or $\overline{D}$.

For the 2D vector-valued function, we define
\[
\begin{split}
\mathcal{H}(\mS)&:=\left\{\bl{\varphi}\in H^1({\mS};\bR^2):\,\bl{\varphi}\cdot\bl{n}\big|_{\p \mS}=0 \right\},\\
\mathcal{H}_\sigma({\mS})&:=\left\{\bl{\varphi}\in H^1({\mS};\,\mathbb{R}^2):\,\na\cdot \bl{\varphi}=0,\ \bl{\varphi}\cdot \bl{n}\big|_{\p \mS}=0\right\},
\end{split}
\]
and
\[
\mathcal{H}_{\sigma,{\mathrm{loc}}}(\overline{\mS}):=\left\{\bl{\varphi}\in H^1_{{\mathrm{loc}}}(\overline{\mS};\,\mathbb{R}^2):\,\na\cdot \bl{\varphi}=0,\ \bl{\varphi}\cdot \bl{n}\big|_{\p \mS}=0\right\}.
\]
We also denote
\[
 \bl{X}:=\big\{\bl{\varphi}\in C_c^\infty(\overline{\mS}\,;\,\mathbb{R}^2):\ \na\cdot \bl{\varphi}=0,\ \bl{\varphi}\cdot\bl{n}\big|_{\p\mS}=0\big\}.
\]
Clearly, $\bl{X}$ is dense in $\mathcal{H}_\sigma$ in $H^1(\mS)$ norm. For matrices $\bl{\Gamma}=(\gamma_{ij})_{1\leq i,j\leq 2}$ and $\bl{K}=(\kappa_{ij})_{1\leq i,j\leq 2}$, we denote
\[
\bl{\Gamma}:\bl{K}=\sum^{2}_{i,j=1}\gamma_{ij}\kappa_{ij}.
\]

\subsection{The generalized Leray's problem in the distorted strip}

\q\ For a given flux $\Phi$ which is supposed to be nonnegative without loss of generality, we consider  Poiseuille-type flows, $\boldsymbol{P}^R_{\Phi}=:{P}^R_{\Phi} (x_2)\bl{e}_1$ and $\boldsymbol{P}^L_{\Phi}=:{P}^L_{\Phi} (y_2)\bl{e}_1'$, of \eqref{NS} with the boundary condition \eqref{NBC} in $\mS_i$ ($i$ denotes $R$ or $L$), then we will find that
\be\label{POSS2}
\left\{
\begin{aligned}
&-\left(P^R_\Phi(x_2)\right)''=C_R,\q\q\q\text{in }\q (0,1),\\
&\left(P^R_\Phi(0)\right)'=\al P^R_\Phi(0),\q \left(P^R_\Phi(1)\right)'=-\al P^R_\Phi(1),\\
&\int_{0}^{1}P^R_\Phi(x_2)dx_2=\Phi, \\
\end{aligned}
\right.
\ee
and
\be\label{POSS1}
\left\{
\begin{aligned}
&-\left(P^L_\Phi(y_2)\right)''=C_L,\q\q\q\text{in }\q (0,c_0),\\
&\left(P^L_\Phi(0)\right)'=\al P^L_\Phi(0),\q \left(P^L_\Phi(c_0)\right)'=-\al P^L_\Phi(c_0),\\
&\int_{0}^{c_0}P^L_\Phi(y_2)dy_2=\Phi, \\
\end{aligned}
\right.
\ee
where the constants $C_i$ are uniquely related to $\Phi$.  Direct computation shows that
\be\label{PF}
\left\{
\begin{split}
P_{\Phi}^R(x_2)&=\f{6\Phi}{6+\al}\lt[\al(-x^2_2+x_2)+1\rt];\\
P_{\Phi}^L(y_2)&=\f{6\Phi}{c_0^2\lt(6+c_0\al\rt)}\lt[\al(-y^2_2+c_0y_2)+c_0\rt].\\
\end{split}
\right.
\ee
And
\be\label{EHP}
\bali
|(P^i_\Phi)'|\leq C\f{\al\Phi}{1+\al},\q\forall i\in\{L,R\},
\eali
\ee
where $C>0$ is a constant independent of both $\al$ and $\Phi$.

The main objective of this paper is to study the solvability of the following generalized Leray's problem: For a given flux $\Phi$, to find a pair $(\bl{u},p)$ such that
\be\label{GL1}
\lt\{
\bali
&\bl{u}\cdot\na \bl{u}+\na p-\Dl \bl{u}=0,\q \na\cdot \bl{u}=0,\q \hskip .1cm\text{in}\q \mathcal{S}, \\
&2(\mathbb{S}\bl{u}\cdot\boldsymbol{n})_{\mathrm{tan}}+\alpha \bl{u}_{\mathrm{tan}}=0,\q \bl{u}\cdot\boldsymbol{n}=0, \q \text{on}\q \p\mathcal{S},\\
\eali
\rt.
\ee
with
\be\label{GL2}
\int^{1}_{0}u_1(x_1, x_2) d x_2=\Phi,\q\text{for any}\q x\in\mathcal{S}_R
\ee
and
\be\label{GL3}
\bl{u}\rightarrow \boldsymbol{P}^i_{\Phi},\q \text{as}\q |x|\rightarrow \i \ \text{ in }\ \mS_i.
\ee

To prove the existence of the above generalized Leray's problem, we first introduce a weak formulation. Multiplying \eqref{GL1}$_1$ with  $\bl{\varphi}\in\bl{X}$ and integration by parts, by using the boundary condition \eqref{GL1}$_2$, we can obtain
\be\label{weaksd}
\begin{split}
&2\int_{\mS}\mathbb{S}\bl{u}:\mathbb{S}\bl{\varphi} dx+\al\int_{\p\mS}\bl{u}_{\mathrm{tan}}\cdot\bl{\varphi}_{\mathrm{tan}}dS+\int_{\mS}\bl{u}\cdot\nabla \bl{\varphi} \cdot \bl{u} dx=0,\q \text{for all }\ \bl{\varphi}\in \bl{X}.
\end{split}
\ee
Now we define the weak solution of the generalized Leary's problem:

\begin{definition}\label{weaksd1}
A vector $\bl{u}:\mS\to \bR^2$ is called a {\it weak} solution of the generalized Leray problem \eqref{GL1} to \eqref{GL3} if and only if
\begin{itemize}
\setlength{\itemsep}{-2 pt}
\item[(i).] $\bl{u}\in \mathcal{H}_{\sigma,{\mathrm{loc}}}(\overline{\mS})$;

\item[(ii).] $\bl{u}$ satisfies \eqref{weaksd};

\item[(iii).] $\bl{u}$ satisfies \eqref{GL2} in the trace sense;

\item[(iv).] $\bl{u}-\bl{P}^i_{\Phi}\in {H}^1(\mS_i)$, for $i=L,R$.
\end{itemize}
\end{definition}

\qed

\begin{remark}
The weak solution also satisfies a generalized version of \eqref{GL3}. Actually, it follows from the trace inequality (\cite[Theorem II.4.1]{Galdi2011}) that for any $x\in\mS_R$:
\be\label{WK111}
\bali
&\int^1_0|\bl{u}(x_1,x_2)-\bl{P}^R_{\Phi}(x_2)|^2dx_2\leq C\|\bl{u}-\bl{P}^R_{\Phi}\|^2_{H^1([x_1,+\i)\times[0,1])},\q\forall x_1>0,
\eali
\ee
where the constant $C$ is independent of $x_1$. Using $(iv)$ in Definition \ref{weaksd1}, the estimate \eqref{WK111} implies that
\[
\int^1_0|\bl{u}(x_1,x_2)-\bl{P}^R_{\Phi}(x_2)|^2dx_2 \to0,\quad\text{as $x_1\to\infty$}.
\]
The case in $\mS_L$ is similar.
\end{remark}

\qed

The following result shows that for each weak solution we can associate a corresponding pressure field. See the proof in Section \ref{SEC3627} below.

\begin{lemma}\label{pressure}
Let $\bl{u}$ be a weak solution to the generalized Leray's problem defined above. Then there exists a scalar function $p\in L^2_{{\mathrm{loc}}}(\overline{\mS})$ such that
\[
\int_{\mS}\na\bl{u}:\na\bl{\psi} dx+\int_{\mS}\bl{u}\cdot\nabla \bl{u}\cdot \bl{\psi}dx=\int_{\mS} p\na\cdot\bl{\psi} dx
\]
holds for any $\bl{\psi}\in C^\i_c(\mS; \bR^2)$.
\end{lemma}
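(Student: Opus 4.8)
The plan is to recover the pressure from the weak formulation by a standard De Rham–type argument, localized to an exhaustion of $\mS$ by bounded smooth subdomains. Fix a sequence $\mS_\zeta$ (the truncated strips defined in \eqref{Szeta}) with $\zeta\to\infty$, and work first on a fixed bounded piece. For $\bl{\psi}\in C_c^\infty(\mS;\bR^2)$, the term $\mathcal F(\bl\psi):=\int_{\mS}\na\bl u:\na\bl\psi\,dx+\int_{\mS}\bl u\cdot\na\bl u\cdot\bl\psi\,dx$ defines a bounded linear functional on $H^1_0(\Omega;\bR^2)$ for every bounded $\Omega\Subset\mS$: the first term is controlled by $\|\na\bl u\|_{L^2(\Omega)}\|\na\bl\psi\|_{L^2(\Omega)}$ since $\bl u\in\mathcal H_{\sigma,\mathrm{loc}}(\overline{\mS})\subset H^1_{\mathrm{loc}}$, and the convective term is controlled using the 2D embedding $H^1(\Omega)\hookrightarrow L^4(\Omega)$ by $\|\bl u\|_{L^4(\Omega)}\|\na\bl u\|_{L^2(\Omega)}\|\bl\psi\|_{L^4(\Omega)}\lesssim_\Omega \|\bl u\|^2_{H^1(\Omega)}\|\bl\psi\|_{H^1(\Omega)}$.

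The key step is to show that $\mathcal F$ annihilates divergence-free test fields. For $\bl\psi\in C_c^\infty(\Omega;\bR^2)$ with $\na\cdot\bl\psi=0$, I want to apply the weak formulation \eqref{weaksd}. The subtlety is that \eqref{weaksd} is stated with the symmetric-gradient form $2\int_{\mS}\mS\bl u:\mS\bl\psi\,dx+\al\int_{\p\mS}\bl u_{\mathrm{tan}}\cdot\bl\psi_{\mathrm{tan}}\,dS$, whereas Lemma 1.3 uses the full-gradient form $\int_{\mS}\na\bl u:\na\bl\psi\,dx$; but for $\bl\psi$ compactly supported in the interior the boundary term vanishes, and the identity $2\int\mS\bl u:\mS\bl\psi=\int\na\bl u:\na\bl\psi+\int\na\bl u:\na^T\bl\psi$ together with $\int\na\bl u:\na^T\bl\psi=\int(\na\cdot\bl u)(\na\cdot\bl\psi)=0$ (an integration by parts valid because $\bl\psi$ has compact support, using $\na\cdot\bl u=0$) reconciles the two. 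Since $\bl X$ is dense in $\mathcal H_\sigma$ and any such interior-supported solenoidal $\bl\psi$ lies in $\bl X$, we conclude $\mathcal F(\bl\psi)=0$ for all divergence-free $\bl\psi\in C_c^\infty(\mS;\bR^2)$.

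With that orthogonality in hand, the classical lemma of De Rham (or equivalently Bogovskii/Nečas, see \cite[Lemma III.1.1 and Corollary III.5.1]{Galdi2011}) yields, on each bounded Lipschitz domain $\Omega\Subset\mS$, a function $p_\Omega\in L^2(\Omega)$, unique up to an additive constant, with $\mathcal F(\bl\psi)=\int_\Omega p_\Omega\,\na\cdot\bl\psi\,dx$ for all $\bl\psi\in C_c^\infty(\Omega;\bR^2)$. Applying this on an increasing exhaustion $\Omega_1\Subset\Omega_2\Subset\cdots$ with $\bigcup_k\Omega_k=\mS$, the pressures $p_{\Omega_{k+1}}$ and $p_{\Omega_k}$ agree on $\Omega_k$ up to a constant; normalizing each (say, fixing the mean on a fixed reference ball $B\Subset\Omega_1$) makes them consistent, and they glue to a single $p\in L^2_{\mathrm{loc}}(\overline{\mS})$ — the closure is harmless since the $\Omega_k$ can be taken to include neighborhoods (in $\overline{\mS}$) of any compact subset, and local $L^2$ membership is all that is claimed. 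This $p$ satisfies the stated identity.

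The main obstacle I anticipate is purely bookkeeping: matching the symmetric-gradient weak formulation \eqref{weaksd} against the plain-Laplacian form in the statement. Once one checks that the discrepancy $\int\na\bl u:\na^T\bl\psi$ vanishes for compactly supported solenoidal $\bl\psi$ (which is immediate) and that the boundary integral drops out for interior test fields, the rest is the standard De Rham pressure-recovery plus a routine gluing over an exhaustion. No new estimate is needed, and in particular no dependence on $\al$ enters since $\al$ only multiplies a boundary term that is inactive here.
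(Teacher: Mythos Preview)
Your argument is correct and follows the same overall philosophy as the paper --- show that the functional $\mathcal{F}(\bl{\psi})=\int\na\bl{u}:\na\bl{\psi}+\int\bl{u}\cdot\na\bl{u}\cdot\bl{\psi}$ annihilates divergence-free test fields with compact support (your reconciliation of the symmetric-gradient form with the full-gradient form is exactly the point that needs checking, and you do it correctly), then invoke De~Rham plus an $H^{-1}$-to-$L^2$ result to recover a locally square-integrable pressure. The paper's own proof (at the end of Theorem~\ref{THM3.8}) differs in organization: it applies De~Rham's lemma (Lemma~\ref{DeRham}) globally on $\mS$ to obtain $p$ as a distribution, then exploits the decomposition $\bl{u}=\bl{v}+\bl{a}$ with $\bl{v}\in H^1(\mS)$ to rewrite the equation as $\na\Pi=\mathrm{div}(\cdots)+\text{compactly supported terms}$ for a modified pressure $\Pi$, shows $\na\Pi\in H^{-1}(\mS)$, and finally invokes Temam's lemma (Lemma~\ref{LEM312}) to obtain $\Pi\in L^2_{\mathrm{loc}}(\overline{\mS})$ and hence $p\in L^2_{\mathrm{loc}}(\overline{\mS})$.

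Your localize-then-glue route is slightly more self-contained (it uses only $\bl{u}\in H^1_{\mathrm{loc}}(\overline{\mS})$ and needs no reference to the flux carrier $\bl{a}$), whereas the paper's route introduces the auxiliary field $\Pi$ which, while not needed for Lemma~\ref{pressure} itself, is reused later for the refined pressure estimate in Lemma~\ref{RMKK54} and the decay proof in Proposition~\ref{PROP5.1}. One small point to tighten in your write-up: to get $p\in L^2_{\mathrm{loc}}(\overline{\mS})$ (with the closure, i.e.\ up to $\p\mS$) you should take the exhausting domains $\Omega_k$ to be bounded Lipschitz subdomains of $\mS$ whose closures include pieces of $\p\mS$ (e.g.\ the truncated strips $\mS_\zeta$), and then the Ne\v{c}as/Bogovski\u{\i} result on each $\Omega_k$ gives $p_{\Omega_k}\in L^2(\Omega_k)$ up to $\p\Omega_k\cap\p\mS$; your phrasing ``the closure is harmless'' glosses over this, but it is precisely what Lemma~\ref{LEM312} (the result you are implicitly using) guarantees.
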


\qed

\subsection{Main results}

\q\ Now we are ready to state the main theorems of this paper. The first one is existence and uniqueness of the weak solution, the second one addresses the regularity and decay estimates of the weak solution.

\begin{theorem}\label{PRO1.2}
Let $0\leq \al\leq +\i$ be the friction coefficient given in \eqref{NBC}. Assume that $\mS$ is the aforementioned smooth strip. Then there exists $C_0>0$, independent of $\al$, such that

\begin{itemize}
\item[(i)] if
\be\label{COND1}
\f{\al\Phi}{1+\al}\leq C_0,
\ee
then the 2D generalized Leray's problem \eqref{GL1}-\eqref{GL2}-\eqref{GL3} has a weak solution $(\bl{u},p)\in \mathcal{H}^1_{\sigma,{\mathrm{loc}}}(\overline{\mS})\times L^2_{{\mathrm{loc}}}(\overline{\mS})$ satisfying
\be\label{sesti}
\sum_{i=L,R}\|\bl{u}-\boldsymbol{P}^i_\Phi\|_{H^1(\mS_i)}\leq \Phi e^{C \Phi},
\ee
for some constant $C$ independent of $\al$.

\item[(ii)] Moreover, if $\t{\bl{u}}$ is another weak solution of \eqref{GL1}-\eqref{GL2}-\eqref{GL3} with the flux $\Phi\leq C_0$, and satisfies that for $\zeta\to\i$,
\be\label{GROWC}
\|\na \t{\bl{u}}\|_{L^2(\mS_\zeta)}=o\lt(\zeta^{3/2}\rt).
\ee
then $\t{\bl{u}}\equiv \bl{u}$.
\end{itemize}

\end{theorem}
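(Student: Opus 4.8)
The plan is to split Theorem \ref{PRO1.2} into the existence part (i) and the uniqueness part (ii), since they require genuinely different techniques. For part (i), I would first construct a divergence-free extension $\bl{a}$ of the prescribed Poiseuille data: take a smooth cutoff $\chi$ equal to $1$ on the far ends of $\mS_R$ and $\mS_L$ and $0$ on $\mS_0$, set $\bl{b} = \chi \bl{P}^R_\Phi + (1-\chi)(\cdots)$ glued appropriately, and correct the (compactly supported, zero-mean) divergence using the Bogovskii operator on $\mS_0$ to obtain $\bl{a} \in \mathcal{H}_{\sigma,\mathrm{loc}}(\overline{\mS})$ with $\bl{a} - \bl{P}^i_\Phi \in H^1(\mS_i)$ and $\|\bl{a}\|$ controlled by $\Phi$ and, crucially via \eqref{EHP}, by $\al\Phi/(1+\al)$ on the straight parts. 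Writing $\bl{u} = \bl{a} + \bl{v}$ with $\bl{v} \in \mathcal{H}_\sigma(\mS)$, the weak formulation \eqref{weaksd} becomes a nonlinear equation for $\bl{v}$, which I would solve by the standard Leray--Hopf/Galerkin scheme: finite-dimensional approximations, an a priori bound, and passage to the limit (Rellich compactness on each $\mS_\zeta$ plus a diagonal argument, since the domain is unbounded).

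The heart of part (i) — and the main obstacle — is the a priori estimate. Testing the equation for $\bl{v}$ with $\bl{v}$ itself, the viscous term yields $2\|\mathbb{S}\bl{v}\|_{L^2}^2 + \al\|\bl{v}_{\mathrm{tan}}\|_{L^2(\p\mS)}^2$, which by Korn's inequality (emphasized in the abstract as holding with $\al$-independent constants) dominates $c\|\bl{v}\|_{H^1(\mS)}^2$; this is exactly what lets every constant be independent of $\al$. The dangerous term is the trilinear $\int_{\mS} \bl{v}\cdot\na\bl{a}\cdot\bl{v}\,dx$ (the term $\int \bl{v}\cdot\na\bl{v}\cdot\bl{v} = 0$ by incompressibility and the other mixed terms are lower order). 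On the bounded piece $\mS_0$ this is handled by the usual $\|\bl{v}\|_{L^4}^2 \lesssim \|\bl{v}\|_{L^2}\|\bl{v}\|_{H^1}$ interpolation. On the straight outlets one cannot afford a factor of $\|\na\bl{a}\|$ times $\|\bl{v}\|_{H^1}^2$ unless $\|\na\bl{a}\|$ is small there — and it is: on $\mS_i$ the relevant derivative is $|(P^i_\Phi)'| \lesssim \al\Phi/(1+\al)$, so under hypothesis \eqref{COND1} with $C_0$ chosen small this term is absorbed into the coercive viscous term. This absorption is what forces the smallness condition \eqref{COND1} rather than a condition on $\Phi$ alone, and it is the mechanism behind the paper's headline point that $\Phi$ may be large when $\al$ is small. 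One also needs a Hardy-type or Poincaré inequality on the outlets to control the zeroth-order piece of the trilinear term; I would invoke the slab Poincaré inequality valid for functions vanishing (in the normal-trace sense) on $\p\mS_\pm$. The resulting bound $\|\bl{v}\|_{H^1(\mS)} \lesssim \Phi e^{C\Phi}$ gives \eqref{sesti} after adding back $\|\bl{a}-\bl{P}^i_\Phi\|_{H^1(\mS_i)}$.

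For the uniqueness part (ii), I would set $\bl{w} = \t{\bl{u}} - \bl{u} \in \mathcal{H}_{\sigma,\mathrm{loc}}(\overline{\mS})$, which solves a homogeneous problem with $\bl{w} \in H^1(\mS_i)$ near each outlet (both solutions have the same limit $\bl{P}^i_\Phi$), and attempt to test the difference equation with $\bl{w}$. The obstruction is that $\bl{w}$ is not known to be globally $H^1$ — only \eqref{GROWC} is assumed — so one must test with $\bl{w}\theta_\zeta$ for a cutoff $\theta_\zeta$ supported in $\mS_\zeta$, equal to $1$ on $\mS_{\zeta-1}$, and then let $\zeta \to \i$. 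This generates commutator terms supported on $\Upsilon^\pm_\zeta$ of the form $\int \bl{w}\otimes\bl{w} : \na\theta_\zeta$ and $\int \na\bl{w} : (\bl{w}\otimes\na\theta_\zeta)$, and (because the pressure does not drop out after localization) a pressure term $\int p\,\bl{w}\cdot\na\theta_\zeta$; one also picks up a term involving $\na\bl{P}^i_\Phi$ which is small by \eqref{COND1}. Using $|\na\theta_\zeta| \lesssim 1$, Cauchy--Schwarz, and the growth bound \eqref{GROWC} (together with the analogous control of $\|p\|_{L^2(\mS_\zeta)}$ and $\|\bl{w}\|_{L^2(\mS_\zeta)}$ obtained from $\|\na\bl{w}\|$ via Poincaré on the slab), the width of the annulus $\Upsilon^\pm_\zeta$ being $O(1)$ while the left-hand coercive term grows, one shows these remainders vanish along a subsequence $\zeta_k \to \i$ — the exponent $3/2$ in \eqref{GROWC} is exactly the threshold that makes $o(\zeta^{3/2})$ on $\|\na\bl{w}\|$ beat the $O(\zeta)$ volume growth in the error terms after Cauchy--Schwarz. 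In the limit one is left with $2\|\mathbb{S}\bl{w}\|_{L^2(\mS)}^2 + \al\|\bl{w}_{\mathrm{tan}}\|_{L^2(\p\mS)}^2 \le (\text{small})\,\|\bl{w}\|_{H^1(\mS)}^2$, and Korn's inequality plus the smallness from $\Phi \le C_0$ forces $\bl{w} \equiv 0$. The delicate point throughout is bookkeeping the $\al$-independence: every time Korn is used the constant must not degenerate as $\al \to 0$ or $\al \to \i$, which is precisely the content of the Korn inequality established earlier in the paper.
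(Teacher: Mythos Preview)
Your existence argument has a real gap in the flux carrier construction. A cutoff-plus-Bogovski\u{i} extension $\bl{a}$ will have $\|\na\bl{a}\|_{L^\infty(\mS_0)}$ of order $\Phi$, so the trilinear term $\int_{\mS_0}\bl{v}\cdot\na\bl{a}\cdot\bl{v}\,dx$ is bounded only by $C\Phi\|\bl{v}\|_{H^1}^2$; absorbing this into the Korn-coercive term forces $\Phi$ itself to be small, not merely $\al\Phi/(1+\al)$. That kills precisely the headline result (large flux for small $\al$). The paper avoids this by a much more delicate construction: in the curvilinear frame near $\p\mS_-$ it sets $\bl{a}=\sigma_\ve'(t)\bl{e_s}$, where $\sigma_\ve$ is a mollified logarithmic profile concentrated in a layer of width $\ve$ with $|\sigma_\ve'(t)|\le 2\Phi\ve/t$. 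A Hardy inequality on $v_t/t$ then yields
\[
\Big|\int_{\mS}\bl{v}\cdot\na\bl{a}\cdot\bl{v}\,dx\Big|\le C\Phi\Big(\ve+\tfrac{\al}{1+\al}\Big)\|\na\bl{v}\|_{L^2(\mS)}^2,
\]
and one chooses $\ve\sim 1/\Phi$. This is what produces both the weak smallness condition \eqref{COND1} and the $e^{C\Phi}$ in \eqref{sesti} (the transition region where $\bl{a}$ interpolates to $\bl{P}^i_\Phi$ has length $e^{2/\ve}$). Your sketch does not contain this mechanism, and the interpolation bound you quote on $\mS_0$ cannot substitute for it.

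For uniqueness you take a different route from the paper and it does not close under the stated hypothesis. Your plan is to test with $\bl{w}\theta_\zeta$ and argue that the commutator terms on $\Upsilon^\pm_\zeta$ vanish along a subsequence; this would work if $\bl{w}\in H^1(\mS)$ outright, but the operative assumption in (ii) is only the growth bound \eqref{GROWC}, under which $\|\na\bl{w}\|_{L^2(\Upsilon^\pm_{\zeta_k})}$ need not tend to zero (pigeonholing $o(\zeta^3)$ over $\zeta$ slabs gives at best $o(\zeta^2)$ on a subsequence, not $o(1)$). The paper instead integrates the energy identity on $\mS_\zeta$, keeps the boundary terms, handles the pressure on $\Upsilon^\pm_K$ via a Bogovski\u{i} inverse-divergence estimate (Proposition~\ref{P2.4}), and after integrating in $\zeta$ over $[K-1,K]$ obtains the differential inequality
\[
Y(K)\le C\big(Y'(K)+(Y'(K))^{3/2}\big),\qquad Y(K)=\int_{K-1}^{K}\|\na\bl{w}\|_{L^2(\mS_\zeta)}^2\,d\zeta.
\]
The Lady\v{z}enskaya--Solonnikov lemma (Lemma~\ref{LEM2.3}) then forces either $Y\equiv 0$ or $\liminf K^{-3}Y(K)>0$; the second alternative contradicts \eqref{GROWC}. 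This ODE step is the missing ingredient in your outline and is exactly where the exponent $3/2$ enters.
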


\qed

\begin{remark} Here we give several remarks.

\begin{itemize}

\item In the existence result (i), noticing that the flux at the cross section $\Phi$ can be relatively large when $\al<1$ is small, since one only needs $\Phi\leq 2C_0\al^{-1}$. Here $\al=0$ means the flux $\Phi$ can be arbitrarily large.

\item The limiting case $\al=0$ (i.e., the total slip situation) has already been considered in \cite{Mucha2003}, where an extra geometry restriction on the shape of the strip was imposed and the uniqueness was not considered there.

    \item The limiting case $\al=+\i$ corresponds to the famous Leray's problem with the no-slip boundary condition which has been investigated for a long period of time. See a systematic review and study in \cite[Chapter XIII]{Galdi2011}.
\item From the uniqueness result in Theorem \ref{PRO1.2}, we see that uniqueness can be only guaranteed by assuming that $\Phi$ is small enough, independent of the scale of $\al$. Actually uniqueness of the weak solution is a more complicated problem than existence. See some discussion and non-uniqueness results in \cite[Chapter XII.2]{Galdi2011} for the stationary 2D exterior problem.

\end{itemize}

\end{remark}

\qed

The following Theorem gives the smoothness and the asymptotic behavior of a weak solution, which decays exponentially to the Poiseuille flow $\boldsymbol{P}^i_{\Phi}$ at each $\mS_i$ as $|x|$ tends  to infinity. Only the partial smallness condition \eqref{COND1} is imposed.

%\begin{remark}
%%From \eqref{sesti}, we have, for any $z$,
%%\be\label{sesti1}
%%\|\na u\|_{L^2(\mS_{z})}\les\langle z\rangle^{1/2},
%%\ee
%%\end{remark}
%The gain in \eqref{sesti} is the key estimate used to prove the uniqueness and asymptotic behavior of the solution, which can be relaxed extensively to the following. For any $z> Z$,
%\be\label{sesti1}
%\sum_{i=L,R}\|\na(u-\boldsymbol{g}^i_\Phi)\|_{L^2(\mS_i\cap \mS_z)}+\|\na u\|_{L^2(\mS_M)}=o\lt( \langle z\rangle^{3/2}\rt).
%\ee
% The achievement of this relaxation is due to the application of a key Lemma in \cite{Lady-Sol1980}, which will be presented in Section \ref{SEC2}.
%
%\end{remark}

\begin{theorem}\label{THMSA}
Let $\bl{u}$ be a weak solution stated in the item $(i)$ of Theorem \ref{PRO1.2}. Then
\[
\bl{u}\in C^\i(\overline{\mS})
\]
such that: For any  $m\geq 0$,
\be\label{HODEST}
\sum_{i=L,R}\|\na^m(\bl{u}-\boldsymbol{P}^i_\Phi)\|_{L^2(\mS_i)}+\|\na^m \bl{u}\|_{L^2(\mS_0)}\leq C_{\Phi,m}.
\ee
Meanwhile, the following pointwise decay estimates hold for sufficiently large $|x|$:
\be\label{pointdecay}
\begin{split}
|\na^m(\bl{u}-\boldsymbol{P}^L_{\Phi})(y)|&\leq C_{\Phi,m}  e^{\sigma_{\Phi,m}y_1},\q\text{for all}\q y_1<<-1;\\
|\na^m(\bl{u}-\boldsymbol{P}^R_{\Phi})(x)|&\leq C_{\Phi,m}  e^{-\sigma_{\Phi,m}x_1},\q\text{for all}\q x_1>>1.
\end{split}
\ee
Here $C_{\Phi,m}$ and $\sigma_{\Phi,m}$ are positive constants depending only on $\Phi$ and $m$.
\end{theorem}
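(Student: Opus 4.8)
\textbf{Proof proposal for Theorem \ref{THMSA}.}

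The plan is to upgrade the weak solution produced in Theorem \ref{PRO1.2}(i) to a smooth one by a bootstrap argument, then separately extract exponential decay on the semi-infinite ends by an energy-decay (Saint-Venant type) estimate. For the interior and boundary regularity I would first rewrite the problem locally: in each bounded subdomain $\mS_\zeta$ the weak solution satisfies a Stokes system with right-hand side $\bl{f}=-\bl{u}\cdot\na\bl{u}\in L^{3/2}_{\mathrm{loc}}$ (using $\bl{u}\in H^1_{\mathrm{loc}}$ and 2D Sobolev embedding $H^1\hookrightarrow L^q$ for all $q<\infty$), together with the Navier-slip boundary condition \eqref{NBC}. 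Since the boundary of $\mS$ is smooth, I would invoke the $L^p$ (or $H^k$) regularity theory for the stationary Stokes system with Navier-slip boundary conditions on smooth domains: $\bl{u}\in W^{2,q}_{\mathrm{loc}}$ for some $q>2$, hence $\bl{u}\in C^{0,\beta}_{\mathrm{loc}}$, which makes $\bl{u}\cdot\na\bl{u}$ more regular; iterating (each step gains differentiability in the Sobolev scale) yields $\bl{u}\in H^k_{\mathrm{loc}}(\overline{\mS})$ for every $k$, and Sobolev embedding then gives $\bl{u}\in C^\infty(\overline{\mS})$. The pressure $p$ from Lemma \ref{pressure} is recovered up to a constant and inherits the same local regularity.

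Next I would establish the global bound \eqref{HODEST}. On the compact middle piece $\mS_0$ this is just the local $H^k$ estimate applied once. On the ends $\mS_R$ (and $\mS_L$ symmetrically), I would work with $\bl{w}:=\bl{u}-\bl{P}^R_\Phi$, which by item (iv) of Definition \ref{weaksd1} lies in $H^1(\mS_R)$ and satisfies a perturbed Stokes system: $-\Dl\bl{w}+\na p=-\bl{u}\cdot\na\bl{w}-\bl{w}\cdot\na\bl{P}^R_\Phi-\bl{P}^R_\Phi\cdot\na\bl{w}$, $\na\cdot\bl{w}=0$, with homogeneous Navier-slip boundary data on $\p\mS_+\cup\p\mS_-$ (because $\bl{P}^R_\Phi$ itself solves \eqref{POSS2}). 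Applying interior-up-to-the-boundary Stokes estimates on the unit cells $\Upsilon^+_\zeta$, and using \eqref{EHP} to control the coefficients involving $\bl{P}^R_\Phi$ uniformly in $\zeta$ (note $|(P^R_\Phi)'|\le C\Phi$ and $|P^R_\Phi|\le C\Phi$), one obtains $\|\bl{w}\|_{H^{m}(\Upsilon^+_\zeta)}\lesssim_{\Phi,m}\|\bl{w}\|_{H^1(\Upsilon^+_{\zeta-1}\cup\Upsilon^+_\zeta\cup\Upsilon^+_{\zeta+1})}$; summing over $\zeta$ against the already-known $\|\bl{w}\|_{H^1(\mS_R)}<\infty$ from \eqref{sesti} gives the desired bound uniform in the cell, and in particular $\|\bl{w}\|_{H^m(\mS_R)}<\infty$ after a further interpolation/summation.

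For the exponential decay \eqref{pointdecay}, I would run a Saint-Venant argument on $\bl{w}$. Set $\Psi(t):=\int_{\mS_R\cap\{x_1>t\}}|\na\bl{w}|^2\,dx$ (or the analogous energy including the lower-order Navier-slip boundary term). Testing the perturbed Stokes system for $\bl{w}$ against $\bl{w}$ itself cut off outside $\{x_1>t\}$, and using the Poincaré and Korn inequalities valid on the straight strip (the Korn inequality being exactly what the paper emphasizes gives $\al$-independent constants), one derives a differential inequality of the form $\Psi(t)\le -C\,\Psi'(t)$ for $t$ large — here the smallness of $\f{\al\Phi}{1+\al}$ via \eqref{COND1} and \eqref{EHP} is what absorbs the convective perturbation terms $\int\bl{w}\cdot\na\bl{P}^R_\Phi\cdot\bl{w}$ and $\int\bl{u}\cdot\na\bl{w}\cdot\bl{w}$ into the left side. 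Gronwall then yields $\Psi(t)\le C_\Phi e^{-2\sigma_\Phi t}$, i.e.\ exponential decay of the $H^1$ energy on tails. Finally, combining this tail energy decay with the local higher-order Stokes estimates on each cell $\Upsilon^+_\zeta$ (which transfer the $H^1$-tail smallness to $H^m$-tail smallness on cells) and Sobolev embedding $H^m(\Upsilon^+_\zeta)\hookrightarrow C^{m-2}(\overline{\Upsilon^+_\zeta})$ gives the pointwise estimate \eqref{pointdecay} with $\sigma_{\Phi,m}$ slightly smaller than $\sigma_\Phi$; the $\mS_L$ case is identical with $x_1\to y_1$ and $1\to c_0$.

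The main obstacle I expect is the Saint-Venant step: one must carefully control the pressure term in the cutoff energy identity (the pressure does not decay a priori, so one normalizes it by subtracting its mean over each cross-section or uses that $\na\cdot\bl{w}=0$ to rewrite $\int p\,\na\cdot(\chi\bl{w})=\int p\,\na\chi\cdot\bl{w}$ and then bounds $p$ on a cell by $\na\bl{w}$ via the Stokes estimate), and one must verify that the constants in the Poincaré–Korn inequalities on the straight strip do not degrade — this is where the paper's uniform Korn inequality is essential, and where the smallness condition \eqref{COND1} is used to close the inequality $\Psi\le -C\Psi'$ with a positive $C$ independent of $\al$.
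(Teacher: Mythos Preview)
Your overall architecture --- bootstrap regularity on bounded pieces, then Saint-Venant energy decay on the straight ends, then transfer to higher norms and go pointwise by Sobolev embedding --- matches the paper's. The Saint-Venant step and the role of the smallness condition \eqref{COND1} via \eqref{EHP} are exactly as the paper uses them; your remark that the pressure must be renormalised cell-by-cell is precisely what the paper does (Lemma \ref{RMKK54}, controlling $\Pi-\overline{\Pi}_{\mathfrak{S}_k}$).

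Where you genuinely diverge is the mechanism for higher-order regularity. You invoke the $W^{2,q}$/$H^k$ theory for the Stokes system with Navier-slip boundary directly. The paper instead passes to the scalar vorticity $\omega=\partial_{x_2}v_1-\partial_{x_1}v_2$, observes that under \eqref{NBC} it satisfies a \emph{Dirichlet} problem $\omega=(\alpha-2\kappa)v_{\mathrm{tan}}$ on $\partial\mS$, applies standard scalar elliptic regularity on congruent localised domains $\tilde{\mathfrak{S}}_k$, and recovers $\bl{v}$ by Biot--Savart. This 2D-specific reduction has two advantages over your route: it avoids citing a Navier-slip Stokes regularity theory (which is less classical and whose constants' $\alpha$-independence you would need to track), and the pressure drops out of the vorticity equation entirely, so no cell-wise pressure estimate is needed at the regularity stage. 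Your approach is legitimate but leans on an external black box; the paper's is more self-contained and makes the $\alpha$-uniformity transparent. A minor point: your claimed one-shot estimate $\|\bl{w}\|_{H^m(\Upsilon^+_\zeta)}\lesssim\|\bl{w}\|_{H^1(\text{neighbours})}$ really requires the same bootstrap you did locally, done uniformly in $\zeta$; the paper handles this by summing the cell estimates over $k$ at each step of the bootstrap rather than at the end.
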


\qed

Also the corresponding pressure $p$ enjoys estimates akin to \eqref{HODEST} and \eqref{pointdecay}. See Remark \ref{RMMKK43}.
\begin{remark}
After our paper was posted on arXiv, we were informed by Professor Chunjing Xie that their group are also considering 2D Leray's problem with Navier-slip boundary and two manuscripts on this topic are finished. We are grateful for their kindness  of sending us their manuscripts. After checking their manuscripts, though there are partial overlaps of results, the proof  between theirs and ours differs in many aspects. Since our two groups' manuscripts are nearly posted at the same time, we believe that we independently solve this 2D Leray's problem at almost the same time. Reader can refer their works in \cite{SWX:2022ARXIV1, SWX:2022ARXIV2} for more details.
\end{remark}
\subsection{Influence of the friction coefficient for the well-posedness}

\q\ Unlike the 3D generalized Leray's problem with the Navier-slip boundary condition in our recent work \cite{LPY:2022ARXIV}, the friction coefficient $\al$ plays an important role for the well-posedness in the 2D problem. Some interesting results different from the 3D problem are presented as follows:
\begin{itemize}
\setlength{\itemsep}{-3 pt}
\item[(i).]Largeness of the flux $\Phi$ when we show the existence, regularity and asymptotic behavior of the constructed $H^1$ weak solution.

\item[(ii).] The $\al-$independence of all the estimates in Theorem \ref{PRO1.2} and Theorem \ref{THMSA} indicates that our results are uniform with the friction coefficient $\al$ and can be applied to the limiting cases $\al=0$ (total slip case) and $\al=\i$ (classical Leray's problem).

\end{itemize}

The main reason behind the above improvements is the validity of the Korn-type inequality ($L^2$ norm equivalence between $\na\bl{v}$ and $\mathbb{S}\bl{v}$) in the 2D strip domain $\mS$ as displayed in Lemma \ref{lem-korn} and Corollary \ref{cor-korn}, which fails in the 3D pipe as Remark \ref{3korn}.

\subsection{Difficulties, outline of the proof and related works}

\subsubsection*{Difficulties and corresponding strategies}

\q\ In two dimensional case, compared with the no-slip boundary condition, the main difficulties of the problem with Navier-slip boundary condition lie in the following:
\begin{itemize}
\setlength{\itemsep}{-3 pt}
\item[(i).] For a given flux, construction of a smooth  solenoidal flux carrier, satisfying the Navier-slip boundary condition, and equalling to the Poiseuille flow at a large distance;

\item[(ii).] Achieving Poincar\'e-type inequalities and Korn-type inequalities in the distorted strip $\mS$.

\end{itemize}
In order to overcome difficulties listed above, our main strategies are as follows:
\begin{itemize}

\setlength{\itemsep}{-3 pt}

\item[(i).] In order to construct the flux carrier, we first introduce a uniform curvilinear coordinates transform near the boundary: $T:\, x\rightarrow (s,t)$ to smoothly connect the two semi-infinite long straight strips and the middle distorted part. Under this  curvilinear coordinates, a thin strip near the boundary of the middle distorted part is straightened. Under the curvilinear coordinates $(s,t)$, the flux carrier is constructed to smoothly connect the Poiseuille flows $\bl{P}_\Phi^L$ and $\bl{P}_\Phi^R$ at far field with a compact supported divergence-free vector $\sigma'_\ve(t)\bl{e}_s$ in $\mS_0$. In the intermediate parts, they can be glued smoothly, and the divergence-free property together with the Navier-slip boundary condition keep valid.

\item[(ii).] The Poincar\'e inequality and Korn's inequality play important roles during the proof. For the no-slip boundary condition, Poincar\'e inequality can be applied directly by using zero boundary condition. However, in the case of the Navier-slip boundary condition, the Poincar\'e inequality is not obvious in the middle distorted part $\mS_0$. First, in $\mS_L$ or $\mS_R$, after subtracting the constant flux, $\bl{v}\cdot\bl{e_1}$ (or $\bl{v}\cdot\bl{e_1}'$) has zero mean value in any cross line, and then combining the impermeable boundary condition, which indicates that $\bl{v}\cdot\bl{e_2}=0$ (or $\bl{v}\cdot\bl{e_2}'=0$) on the boundary, we can achieve Poincar\'e inequality in the straight strips. Based on the result in the straight strip, we derive the Poincar\'e inequality in $\mS_0$ by the trace theorem and a 2D Payne's identity \eqref{Payne3D}. See Lemma \ref{TORPOIN}. The $\al$-independence of constants during the proof of main theorems is creditable to Korn's inequalities in 2D strips. The Korn's inequality is proved via a contradiction argument, which is given in Section \ref{PRE}, and is highly dependent on the compactness of the curvature of the boundary. It is not valid in the 3D case. See a counterexample in Remark \ref{3korn}.

\end{itemize}

\subsubsection*{Outline of the proof}

\q\ The existence and uniqueness of the solution will be given in Section \ref{SEU}. Before proving the existence theorem, a smooth solenoidal flux carrier $\bl{a}$ will be carefully constructed under the help of the  uniformly curvilinear coordinates near the boundary $\p\mS_{-}$. By subtracting this smooth flux carrier, the existence problem of \eqref{GL1}-\eqref{GL2}-\eqref{GL3} is reduced to a related one that the solution approaches zero at spacial infinity, which can be handled by the standard Galerkin method.

The main idea of proving the uniqueness is applying Lemma \ref{LEM2.3}, which was originally announced in reference \cite{Lady-Sol1980} and used to prove the uniqueness of the Leray's problem with no-slip boundary. Although our idea originates from \cite{Lady-Sol1980}, there are many differences from the previous literature. Some estimates of the present manuscript is much more complicated, involving the Poincar\'e inequality in a distorted strip as shown in Lemma \ref{TORPOIN} and Korn's inequality in Lemma \ref{lem-korn}.

Proofs of the asymptotic behavior and smoothness of weak solutions are given in Section \ref{SECH}. The main idea in deriving the exponential decay of $H^1$ weak solutions is to derive a first order ordinary differential inequality for the $L^2$ gradient integration in domain $\mS\backslash\mS_\zeta$. For the global estimates of higher-order norms, by using a ``decomposing-summarizing" technique, $H^1$ estimate of the vorticity in $\mS$ will be obtained, and then Biot-Savart law indicates $H^2$ global estimate of the solution. Using the bootstrapping argument, higher-order global estimates then follow. This also leads to the higher-order exponential decay estimates, by utilizing the $H^1$-decay estimate and interpolation inequalities.

\subsubsection*{Some related works}

\q\  The well-posedness study of the stationary Navier-Stokes equations in an infinite long pipe (or an infinite strip in the 2D case) with no-slip boundary condition and toward the Poiseuille flow laid down by Ladyzhenskaya in 1950s \cite{Ladyzhenskaya:1959UMN,Ladyzhenskaya:1959SPD}, in which the problem was called \emph{Leray's problem}. Later by reducing the problem to the resolution of a variational problem, Amick \cite{Amick:1977ASN, Amick:1978NATMA} obtained the existence result of the Leray's problem with small flux, but the uniqueness was left open. For the planar flow, Amick--Fraenkel \cite{Amick-Fraenkel1980} studied the Leray's problem in various types of stripes distinguished by their properties at infinity. An approach to solving the uniqueness of small-flux solution via energy estimate was built by Ladyzhenskaya-Solonnikov \cite{Lady-Sol1980}, in which authors also addressed the existence and asymptotic behavior results. See \cite{AP:1989SIAM, HW:1978SIAM, Pileckas:2002MB} for more related conclusion, also \cite[Chapter XIII]{Galdi2011} for a systematic review to the Leray's problem with the no-slip boundary condition. Recently Yang-Yin \cite{YY:2018SIAM} studied the well-posedness of weak solutions to the steady non-Newtonian fluids in pipe-like domains. Wang-Xie in \cite{WX:2019ARXIV,Wang-Xie2022ARMA} studied the existence, uniqueness and uniform structural stability of Poiseuille flows for the 3D axially symmetric inhomogeneous Navier-Stokes equations in the 3D regular cylinder, with a force term appearing on the right hand of the equations.

The Navier-slip boundary condition was initialed by Navier \cite{Navier}. It allows fluid slip on the boundary with a scale proportional to its stress tensor. Different from the no-slip boundary, the Leray's problem with the Navier-slip boundary condition requires much more complicated mathematical strategies. \cite{Mucha2003, Mucha:2003STUDMATH, Konie:2006COLLMATH} studied the solvability of the steady Navier-Stokes equations with the perfect Navier-slip condition ($\al=0$). In this case, the solution approaches to a constant vector at the spatial infinity. Authors in \cite{Amrouche2014,Ghosh:2018PHD,AACG:2021JDE} studied the properties of solutions to the steady Navier-Stokes equations with the Navier-slip boundary in bounded domains. Wang and Xie \cite{WX:2021ARXIV2} showed the uniqueness and uniform structural stability of Poiseuille flows in an infinite straight long pipe with the Navier-slip boundary condition. Authors of the present paper studied the related 3D Leray's problem with the Naiver-slip boundary condition \cite{LPY:2022ARXIV} under more strict smallness of the flux than the recent paper on 2D case. They also proved the characterization of bounded smooth solutions for the 3D axially symmetric Navier-Stokes equations with the perfect Navier-slip boundary condition in the infinitely long cylinder \cite{LP:2021ARXIV}.

This paper is arranged as follows: In Section \ref{PRE}, some preliminary work are contained, in which a uniform curvilinear coordinate near the boundary will be introduced and the Navier-slip boundary condition will be written under this curvilinear coordinate frame, and some useful lemmas will be presented. We will concern the existence and uniqueness results in Section \ref{SEU}. Finally, we focus on the higher-order regularity and exponential decay properties of the solution in Section \ref{SECH}.

\section{Preliminary}\label{PRE}

First, we introduce a uniformly curvilinear coordinate near the boundary, which will help to construct the flux carrier. This curvilinear coordinates can be viewed as the straightening of the boundary in the distorted part $\mS_0$.  Under this curvilinear coordinates, the Navier-slip boundary condition in \eqref{NBC} on the boundary of $\mS_0$ will share almost the same form as that in the semi-infinite straight part of $\mS_L$ and $\mS_R$. See \eqref{NBCM} below.

\subsection{On the uniformly curvilinear coordinates near the boundary}
To investigate the delicate feature of the Navier-slip boundary condition, also to construct the flux carrier in the distorted part of the strip, one needs to parameterize the boundary of $\mS$. Recalling that
\[
\p\mS=\p\mS_+\cup\p\mS_-\cup\p\mS_{Ob},
\]
where $\p\mS_{\pm}$ are upper and lower boundary portions of $\mS$, while $\p\mS_{Ob}$ denotes the union of boundaries of obstacles in the middle of the strip. For convenience, we only parameterize $\p\mS_-$ since the others are similar. Besides, under this parameterised curvilinear coordinates, we will construct the divergence-free flux carrier, which is supported in
\[
\mS_-(\dl):=\{x\in\overline{\mS}:\,\mathrm{dist }(x,\p\mS_-)<\dl\},
\]
for some suitably small $\dl$ in the next section.
\begin{figure}[H]\label{FIG1}
\centering
\includegraphics[scale=0.3]{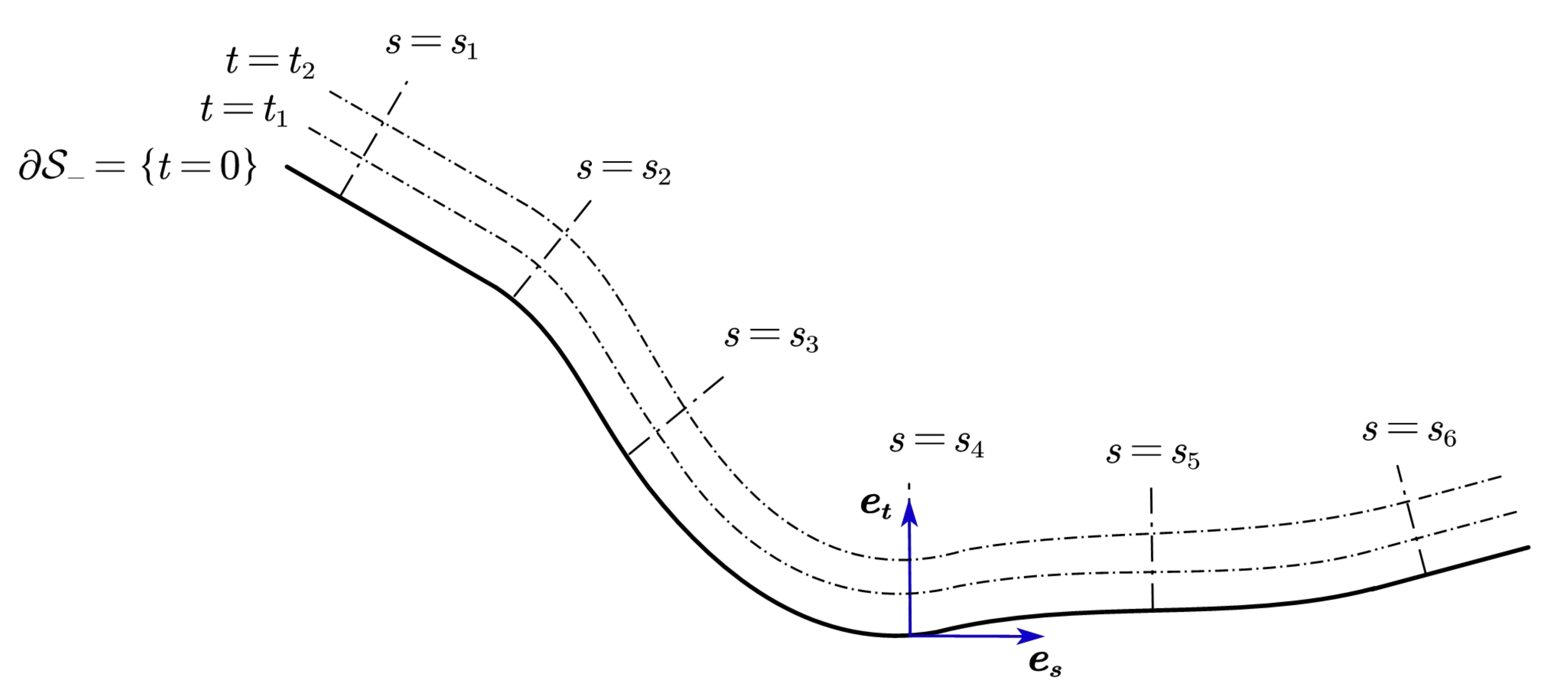}\\
\caption{A curvilinear coordinate system $(s,t)$ near the boundary portion $\p\mS_-$.}\label{FIG1}
\end{figure}

Denoting \be\label{PS-}
\p\mS_-=\{\bl{\mathfrak{b}}(s)=\left(\mathfrak{b}_1(s),\mathfrak{b}_2(s)\right)\in\mathbb{R}^2:\,s\in\mathbb{R}\},
\ee
where $\mathfrak{b}_1(s),\mathfrak{b}_2(s)$ are smooth functions of $s$. Without loss of generality, we suppose the parameter $s\in\mathbb{R}$ being the \emph{arc length parameter} of $\p\mS_-$, so that
\[
|\bl{\mathfrak{b}}(s)|=\sqrt{(\mathfrak{b}_1'(s))^2+(\mathfrak{b}_2'(s))^2}\equiv 1,\q\forall s\in\mathbb{R}.
\]
By the definition of $\mS$ given in Section \ref{SEC1}, $\p\mS_-$ lies on part of straight lines $\{y_2=0\}$ or $\{x_2=0\}$ except a compact distorted part in the middle, and there exists $s_0>0$ such that
\be\label{para}
\p\mS_-\cap\overline{\mS_0}=\{\bl{\mathfrak{b}}(s)=\left(\mathfrak{b}_1(s),\mathfrak{b}_2(s)\right)\in\mathbb{R}^2:\,s\in[-s_0,s_0]\}.
\ee
This indicates $\bl{\mathfrak{b}}(s_0)=O$ and $\bl{\mathfrak{b}}(-s_0)=O'$. Meanwhile, all ``obstacles" inside $\mS$ are away from it.

Because of the compact distortion, $\p\mS_-$ must satisfy the following condition:
\begin{condition}[Uniform interior sphere]\label{COND1.1}
	For any point $z\in\p\mS_-$, there exists a disk $K_{z}$, with its radius being $R_{z}$, such that
	\[
	\overline{K_{z}}\cap(\mathbb{R}^2-\mS)=\{z\}.
	\]
	Meanwhile, there exists $\dl>0$ such that
	\be\label{UISC}
	R_{z}\geq2\dl,\q\forall z\in\p\mS_-.
	\ee
\end{condition}

\qed

Due to the uniform interior sphere condition, for any $x\in\mS_-(\dl)$, there exists a unique point $z\in\p\mS_-$ such that $|x-z|=\mathrm{dist }(x,\p\mS_-)$. Recalling \eqref{PS-}, there exists a unique pair $(s,t)\in\mathbb{R}\times[0,\dl)$ such that $y=\bl{\mathfrak{b}}(s)$ and $t=\mathrm{dist }(x,z)$. In this way, the following mapping is one-to-one and well-defined:
\be\label{MAPP}
x\to(s,t),\q\forall x\in\mS_-(\dl).
\ee
Meanwhile, one has
\begin{lemma}\label{LEM1.2}
The mapping defined in \eqref{MAPP} is smooth.
\end{lemma}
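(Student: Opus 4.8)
The plan is to realize the inverse of the map $x\mapsto(s,t)$ as an explicit smooth map and then invoke the inverse function theorem. Concretely, define
\[
\bl{\Psi}(s,t):=\bl{\mathfrak{b}}(s)+t\,\bl{\nu}(s),\qquad (s,t)\in\mathbb{R}\times[0,\dl),
\]
where $\bl{\nu}(s)$ is the inward unit normal to $\p\mS_-$ at $\bl{\mathfrak{b}}(s)$; since $s$ is the arc-length parameter, $\bl{\mathfrak{b}}'(s)$ is a unit tangent and $\bl{\nu}(s)=\big(-\mathfrak{b}_2'(s),\mathfrak{b}_1'(s)\big)$ (up to a fixed sign chosen so that $\bl{\nu}$ points into $\mS$). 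Because $\mathfrak{b}_1,\mathfrak{b}_2\in C^\infty$, the map $\bl{\Psi}$ is $C^\infty$ on $\mathbb{R}\times(-\dl,\dl)$. The first step is to check that $\bl{\Psi}$ is exactly the inverse of \eqref{MAPP}: for $x\in\mS_-(\dl)$ with nearest boundary point $z=\bl{\mathfrak{b}}(s)$ and $t=\mathrm{dist}(x,\p\mS_-)$, the segment from $z$ to $x$ is orthogonal to $\p\mS_-$ (this is the standard first-order condition for the distance minimizer, using that $\p\mS_-$ is smooth and $x$ is an interior point), hence $x=\bl{\mathfrak{b}}(s)+t\,\bl{\nu}(s)=\bl{\Psi}(s,t)$.

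The second step is to compute the Jacobian of $\bl{\Psi}$ and show it is nonsingular on the relevant range. Using the Frenet relation $\bl{\nu}'(s)=-\kappa(s)\,\bl{\mathfrak{b}}'(s)$, where $\kappa$ is the signed curvature of $\p\mS_-$, one gets
\[
\p_s\bl{\Psi}(s,t)=(1-t\,\kappa(s))\,\bl{\mathfrak{b}}'(s),\qquad \p_t\bl{\Psi}(s,t)=\bl{\nu}(s),
\]
so $\det D\bl{\Psi}(s,t)=\pm\big(1-t\,\kappa(s)\big)$. By the compact-distortion hypothesis the curvature $\kappa$ is supported in $[-s_0,s_0]$ and is bounded; the uniform interior-sphere Condition 1.8 gives $|\kappa(s)|\le 1/R_z\le 1/(2\dl)$, hence $1-t\kappa(s)\ge 1/2>0$ for all $0\le t<\dl$. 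Therefore $D\bl{\Psi}$ is invertible everywhere on $\mathbb{R}\times[0,\dl)$, and by the inverse function theorem the local inverses of $\bl{\Psi}$ are $C^\infty$. Since we already showed in Step 1 that the globally defined map $x\mapsto(s,t)$ coincides with $\bl{\Psi}^{-1}$ (it is well-defined and single-valued on $\mS_-(\dl)$ by the paragraph preceding \eqref{MAPP}), it inherits the $C^\infty$ regularity, which is the claim.

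The only genuinely delicate point is the orthogonality claim in Step 1 — i.e. that the minimizer of $z\mapsto|x-z|$ over $z\in\p\mS_-$ is attained at a point where $x-z\perp\p\mS_-$, and that this makes $t\mapsto x$ fit the ansatz $\bl{\Psi}$ with $t$ equal to the distance rather than its negative (the sign of $\bl{\nu}$ must be chosen consistently inward). This is standard but relies on $x$ being strictly inside $\mS$ (so the minimizer is interior to the smooth curve, not a spurious boundary effect) and on $\dl$ being small enough that the nearest point is unique — exactly what Condition 1.8 is arranged to guarantee. Everything else is a routine application of Frenet formulas and the inverse function theorem, so I would keep that part brief and spend the write-up budget on justifying $1-t\kappa(s)\ge\tfrac12$ from \eqref{UISC}.
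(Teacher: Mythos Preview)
Your proposal is correct and follows essentially the same route as the paper: define the explicit smooth map $\bl{\Psi}(s,t)=\bl{\mathfrak{b}}(s)+t\bl{\nu}(s)$ (the paper writes it as $\bl{F}(s,t)=\bl{\mathfrak{b}}(s)-t\bl{n}_{\bl{\mathfrak{b}}(s)}$ with the outer normal), compute its Jacobian determinant as $1-t\kappa(s)$, bound this below by $\tfrac12$ via the uniform interior sphere condition \eqref{UISC}, and conclude by the inverse function theorem. Your write-up actually adds the orthogonality justification for why $\bl{\Psi}$ is the inverse of \eqref{MAPP}, which the paper omits; one small imprecision is that \eqref{UISC} only bounds $\kappa$ from one side, not $|\kappa|$, but that is the side needed for $1-t\kappa(s)\ge\tfrac12$, so the argument goes through.
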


\begin{proof}
By the construction of this mapping, one deduces
	\[
	x=y-\mathrm{dist }(x,z)\bl{n}_z,
	\]
	where $\bl{n}_y$ is the unit outer normal of $y\in\p\mS_-\,$. Since $y=\bl{\mathfrak{b}}(s)$, we define
	\[
	\bl{F}(s,t):=\bl{\mathfrak{b}}(s)-t\bl{n}_{\bl{\mathfrak{b}}(s)}\q\forall(s,t)\in\mathbb{R}\times[0,\dl)\,\,.
	\]
Clearly $\bl{F}$ is well-defined and smooth in $\mathbb{R}\times[0,\dl)$, and its Jacobian matrix writes
	\[
	J\bl{F}=\left(
	\begin{array}{cc}
		\mathfrak{b}_1'(s)-t\left(\f{d}{d s}\bl{n}_{\bl{\mathfrak{b}}(s)}\right)_1&\q-(\bl{n}_{\bl{\mathfrak{b}}(s)})_1\\[2mm]
		\mathfrak{b}_2'(s)-t\left(\f{d}{d s}\bl{n}_{\bl{\mathfrak{b}}(s)}\right)_2&\q-(\bl{n}_{\bl{\mathfrak{b}}(s)})_2\\
	\end{array}
	\right).
	\]
Here and below, $(\bl{X})_i$ with $i=1,2$ means the $i$-th component of the vector $\bl{X}$. Direct calculation shows
	\[
	\mathrm{det }(J\bl{F})=\left(\bl{\mathfrak{b}}'(s)-t\left(\f{d}{d s}\bl{n}_{\bl{\mathfrak{b}}(s)}\right)\right)\cdot\bl{n}_{\bl{\mathfrak{b}}(s)}^{\perp}=1-t\left(\f{d}{d s}\bl{n}_{\bl{\mathfrak{b}}(s)}\right)\cdot\bl{\mathfrak{b}}'(s),
	\]
	where
	\[
	\bl{n}_{\bl{\mathfrak{b}}(s)}^{\perp}=(-(\bl{n}_{\bl{\mathfrak{b}}(s)})_2,(\bl{n}_{\bl{\mathfrak{b}}(s)})_1)=\bl{\mathfrak{b}}'(s).
	\]
	This indicates that
	\[
	\mathrm{det }(J\bl{F})\geq1-t\f{1}{2\dl}>\f{1}{2},\q\forall(s,t)\in\mathbb{R}\times[0,\dl)
	\]
due to \eqref{UISC} so that $\f{1}{2\dl}$ can bound the curvature of $\p\mS_-\,$. Recalling the compactness of the distorted part, the lemma is claimed by the inverse mapping theorem.
\end{proof}

For any $x=(x_1,x_2)\in\mS_-(\dl)$, Condition \ref{COND1.1} and Lemma \ref{LEM1.2} above guarantee the following well-defined curvilinear coordinate system
\[
(s,t)=(s(x),t(x))\in\mathbb{R}\times[0,\dl).
\]
Geometrically, $t(x)$ is the distance of the given point $x\in\mS_-(\dl)$ to the boundary $\p\mS_-$, while $s(x)$ denotes the parameter coordinate of the unique point $y\in\p\mS_-$ such that $|x-y|=\mathrm{dist }(x,\p\mS_-)$. As it is shown in Figure \ref{FIG1}, we denote
\be\label{ESET}
\bl{e_s}=(e_{s1}\,,\,e_{s2})\q\text{and}\q\bl{e_t}=(e_{t1}\,,\,e_{t2})
\ee
are unit tangent vector of $s-$curves and $t-$curves, respectively. Meanwhile, they are all independent with variable $t\in[0,\dl)$. Clearly
\be\label{ESET}
\left\{
\begin{aligned}
\bl{e_s}&\equiv\bl{e_1};\\
\bl{e_t}&\equiv\bl{e_2},\\
\end{aligned}
\right.\q\q\forall x\in\overline{\mS_R};\q\q\left\{
\begin{aligned}
\bl{e_s}&\equiv\bl{e_1}';\\
\bl{e_t}&\equiv\bl{e_2}',\\
\end{aligned}
\right.\q\q\forall x\in\overline{\mS_L}.
\ee
Moreover,
\be\label{ENTS}
\nabla_x t=\left(\f{\p t}{\p x_1}\,,\,\f{\p t}{\p x_2}\right)\equiv\bl{e_t},
\ee
and there exists a smooth function $\gamma(s,t)>\gamma_0>0$ that
\be\label{ENTS1}
\nabla_x s=\left(\f{\p s}{\p x_1}\,,\,\f{\p s}{\p x_2}\right)=\gamma\bl{e_s}.
\ee
Thus by denoting
\[
\bl{D}:=\left(
\begin{array}{cc}
	\f{\p s}{\p x_1}&\f{\p s}{\p x_2}\\[2mm]
	\f{\p t}{\p x_1}&\f{\p t}{\p x_2}\\
\end{array}
\right),
\]
one derives
\[
\mathrm{det }\,\bl{D}=-\gamma\bl{e_s}\cdot\bl{e_t}^{\perp}=\gamma.
\]
by \eqref{ENTS} and \eqref{ENTS1}. Moreover, by calculating the inverse matrix of $\bl{D}$, one deduces
\[
\left(
\begin{array}{cc}
	\f{\p x_1}{\p s}&\f{\p x_1}{\p t}\\[2mm]
	\f{\p x_2}{\p s}&\f{\p x_2}{\p t}\\
\end{array}
\right)=\left(\begin{array}{cc}
	\f{1}{\gamma}e_{t2}  & -e_{s2}\\[2mm]
	-\f{1}{\gamma}e_{t1} & e_{s1}\\
\end{array}
\right),
\]
which indicates
\[
\left\{
\begin{aligned}
&\f{\p x}{\p s}=\f{\bl{e_s}}{\gamma};\\
&\f{\p x}{\p t}=\bl{e_t}.\\
\end{aligned}
\right.
\]

Since $|\bl{e_s}|=|\bl{e_t}|\equiv 1$ and $\bl{e_s}\cdot\bl{e_t}\equiv0$, there exists a bounded smooth function $\kappa=\kappa(s,t)\in\mathbb{R}$, which denotes the curvature of the boundary, such that
\[
\left\{
\begin{split}
\f{d\bl{e_t}}{ds}&=-\f{\kappa\bl{e_s}}{\gamma};\\[2mm]
\f{d\bl{e_s}}{ds}&=\f{\kappa\bl{e_t}}{\gamma},\\
\end{split}
\right.\q\q\forall(s,t)\in\mathbb{R}\times[0,\dl).
\]
By direct calculation, the divergence and curl of a vector field $\bl{w}=w_s(s,t)\bl{e_s}+w_t(s,t)\bl{e_t}$ writes
\be\label{DIV-CURL}
\begin{split}
\mathrm{div }\,\bl{w}&=\gamma\p_sw_s+\p_tw_t-\kappa w_t;\\
\mathrm{curl }\,\bl{w}&=\p_{x_2}w_1-\p_{x_1}w_2=\p_tw_s-\gamma\p_sw_t-\kappa w_s,
\end{split}
\ee
under this curvilinear coordinates.

To finish this subsection, let us focus on the Navier-slip boundary condition under the curvilinear coordinates. Writing
\[
\bl{u}=u_{s}\bl{e_s}+u_t\bl{e_t}.
\]
Then \eqref{NBC} enjoys the following simplified expression:
\be\label{NBCM}
\left\{
\begin{aligned}
&\f{\p u_s}{\p\bl{n}}=-\p_{t}u_{s}=\left(\kappa-\al\right)u_{s},\\
&u_t=0,\\
\end{aligned}
\right.\q\text{on}\q\p\mS_-.
\ee
See \cite[Proposition 2.1 and Corollary 2.2]{Watanabe2003} for a detailed calculation. Moreover, denoting $\mathfrak{w}=\p_{x_2}u_1-\p_{x_1}u_2$ and applying \eqref{DIV-CURL}$_2$, one has \eqref{NBCM}$_1$ is equivalent to
\[
\mathfrak{w}=\left(-2\kappa+\al\right)u_{s},\q\text{on}\q\p\mS_-.
\]

\subsection{The Poincar\'e inequality and the Korn's inequality}

The following Poincar\'e inequalities and Korn's inequality will play crucial role in the existence and uniqueness results when the no-slip boundary is replaced by the Navier-Slip boundary.

\begin{lemma}[Poincar\'e inequality in a straight strip]\label{POIN}
Let $\bl{g}=g_1\boldsymbol{e_1}+g_2\boldsymbol{e_2}$ be a $H^1$ vector field in the box domain $S:=[a,b]\times[c,d]$, and satisfies that
\bes
\int^d_c g_1(x_1,x_2)dx_2=g_2(x_1,x_2)\big|_{x_2=c,d}=0, \q  \forall\ x_1\in [a,b],
\ees
%
%. Set $f(x_1)\in H^1(\tilde{I})$ satisfying
%\[
%\int_{\tilde{I}}f(x_1)dx_1=\int_{\tilde{I}}g_2(x_1,x_*)dx_1,\q\text{for some}\q x_*\in I,
%\]
%and define $\bl{h}:=\bl{g}-f(x_1)\bl{e_2}$,
then we have the following

\be\label{poinfull}
\|\bl{g}\|_{L^2(S)}\leq C\|\p_{x_2} \bl{g}\|_{L^2(S)}.
\ee
where $C\ls |c-d|$ is a constant depending on the width of the strip.
\end{lemma}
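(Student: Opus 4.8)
The plan is to prove the two-dimensional Poincaré inequality \eqref{poinfull} by reducing to a one-dimensional inequality on vertical slices and integrating in $x_1$. The two structural hypotheses are used separately: the zero-mean condition $\int_c^d g_1(x_1,x_2)\,dx_2=0$ controls $g_1$, while the vanishing boundary trace $g_2(x_1,c)=g_2(x_1,d)=0$ controls $g_2$. Fix $x_1\in[a,b]$ and work on the segment $[c,d]$.

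First I would handle $g_2$. Since $g_2(x_1,c)=0$, the fundamental theorem of calculus gives $g_2(x_1,x_2)=\int_c^{x_2}\p_{x_2}g_2(x_1,\tau)\,d\tau$, and Cauchy--Schwarz yields $|g_2(x_1,x_2)|^2\leq |x_2-c|\int_c^d|\p_{x_2}g_2(x_1,\tau)|^2\,d\tau$. Integrating over $x_2\in[c,d]$ produces
\be\label{pf-g2slice}
\int_c^d |g_2(x_1,x_2)|^2\,dx_2\leq \frac{|c-d|^2}{2}\int_c^d|\p_{x_2}g_2(x_1,\tau)|^2\,d\tau.
\ee
For $g_1$, the zero-mean condition lets me apply the one-dimensional Poincaré--Wirtinger inequality on $[c,d]$: there is a constant $c_0\lesssim|c-d|$ with $\int_c^d|g_1(x_1,x_2)|^2\,dx_2\leq c_0^2\int_c^d|\p_{x_2}g_1(x_1,\tau)|^2\,d\tau$. (Equivalently one can write $g_1(x_1,x_2)-g_1(x_1,x_2')=\int_{x_2'}^{x_2}\p_{x_2}g_1\,d\tau$, average in $x_2'$ over $[c,d]$, and apply Cauchy--Schwarz.) Adding this to \eqref{pf-g2slice} gives a slicewise bound $\int_c^d|\bl{g}(x_1,x_2)|^2\,dx_2\leq C^2\int_c^d|\p_{x_2}\bl{g}(x_1,\tau)|^2\,d\tau$ with $C\lesssim|c-d|$, uniformly in $x_1$.

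Finally I would integrate this slicewise inequality over $x_1\in[a,b]$ and apply Fubini, which immediately yields $\|\bl{g}\|_{L^2(S)}^2\leq C^2\|\p_{x_2}\bl{g}\|_{L^2(S)}^2$, i.e.\ \eqref{poinfull}. A small technical point: the hypotheses are stated pointwise in $x_1$, so strictly one should first establish the estimates for smooth $\bl{g}$ (or note that the trace and the slicewise integrals are well defined for a.e.\ $x_1$ by the Sobolev trace theorem and Fubini for $H^1$ functions) and then pass to the limit by density; this is routine. I do not anticipate a genuine obstacle here—the only mild subtlety is bookkeeping the constant's dependence on $|c-d|$ correctly and making the a.e.-in-$x_1$ slicing rigorous, but both are standard.
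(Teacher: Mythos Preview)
Your proposal is correct and follows essentially the same approach as the paper: apply the appropriate one-dimensional Poincar\'e inequality on each vertical slice $\{x_1\}\times[c,d]$ (zero-mean for $g_1$, zero boundary trace for $g_2$) and then integrate in $x_1$. The paper's proof is simply a terser version of yours, invoking ``the classical one dimensional Poincar\'e inequality'' without spelling out the constants or the density argument.
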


\pf Since $g_1$ has zero mean and $g_2$ has zero boundary in the $x_2$ direction. The classical one dimensional Poincar\'e inequality leads to
\bes
\|\bl{g}(x_1)\|^2_{L^2_{x_2}([c,d])}\ls_{|d-c|} \|\p_{x_2} \bl{g}(x_1)\|^2_{L^2_{x_2}([c,d])}, \q \forall\ x_1\in [a,b]
\ees
Integration on $[a,b]$ with respect to $x_1$ variable indicates \eqref{poinfull}.

\qed
%a
%For any $x=(x_1,x_2)\in\tilde{I}\times I:=[c',c'']\times I$, by the impermeable condition $\bl{g}\cdot\boldsymbol{n}=0$, one sees
%\[
%g_1(c',x_2)=g_1(c'',x_2)=0,\q\forall x_2\in I.
%\]
%Then the classical one-dimensional Poincar\'e inequality indicates
%\[
%\|\bl{h}\cdot\bl{e_1}(\cdot,x_2)\|^2_{L^2({I})}\leq C^2\|\p_{x_1}(\bl{h}\cdot\bl{e_1})(\cdot,x_2)\|^2_{L^2(I)}.
%\]
%Integrating with $x_2$ over $\tilde{I}$ on both sides, respectively, one concludes the validity of the $\bl{e_1}$-part of \eqref{poinfull}. To deduce the remaining direction, one notices the integration of $h_2(x_1,x_2)$ with $x_1$ over $\tilde{I}$ vanishes for any $x_2\in I$ owing to the divergence-free property of $\bl{g}$. The reason is:
%\[
%\begin{split}
%\f{d}{dx_2}\int_{\tilde{I}}h_2(x_1,x_2)dx_1&=-\int_{\tilde{I}}\p_{x_1}g_1(x_1,x_2)dx_1=g_1(c',x_2)-g_1(c'',x_2)=0,
%\end{split}
%\]
%and
%\[
%\int_{\tilde{I}}h_2(x_1,x_*)dx_1=\int_{\tilde{I}}g_2(x_1,x_*)dx_1-\int_{\tilde{I}}f(x_1)dx_1=0.
%\]
%Therefore, the classical one dimensional (zero-meanvalue) Poincar\'e inequality leads to the $\bl{e_2}$-part of \eqref{poinfull}. This completes the proof of the lemma.

\begin{lemma}[Poincar\'e inequality in the torsion part]\label{TORPOIN}
Let $\zeta>1$ and $\bl{h}=h_1\boldsymbol{e_1}+h_2\boldsymbol{e_2}=\tilde{h}_1\boldsymbol{e_1}'+\tilde{h}_2\boldsymbol{e_2}'\in H^1(\mS_\zeta)$ be a divergence free vector with zero flux, that is
\[
\int_{0}^1{h}_1(x_1,x_2)dx_2=0\q \text{for any}\ x\in\mS_\zeta\cap\mS_R.
\]
If we suppose $\bl{h}\cdot\bl{n}\equiv 0$ on $\p\mS\cap\p\mS_\zeta$, where $\bl{n}$ is the unit outer normal vector on $\p\mS$, then the following Poincar\'e inequality holds:
\be\label{TORPIPEPOIN}
\|\bl{h}\|_{L^2(\mS_\zeta)}\leq C\|\na\bl{h}\|_{L^2(\mS_\zeta)}.
\ee
Here $C>0$ is a uniform constant, independent of $\zeta$.
\end{lemma}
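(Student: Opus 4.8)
The plan is to prove the Poincar\'e inequality \eqref{TORPIPEPOIN} by a ``localize and patch'' argument, reducing to the straight-strip estimate of Lemma \ref{POIN} on the two semi-infinite ends and handling the compact distorted core $\mS_0$ separately. Concretely, decompose $\mS_\zeta = \mS_{L,\zeta}\cup\mS_0\cup\mS_{R,\zeta}$. On $\mS_{R,\zeta}=(0,\zeta)\times(0,1)$ the hypotheses are exactly those of Lemma \ref{POIN}: $h_1$ has zero mean in $x_2$ on every cross-section (the zero-flux assumption), and $h_2=\bl{h}\cdot\bl{e_2}=\bl{h}\cdot\bl{n}=0$ on $\p\mS_+\cup\p\mS_-$ there, so $\|\bl{h}\|_{L^2(\mS_{R,\zeta})}\le C\|\p_{x_2}\bl{h}\|_{L^2(\mS_{R,\zeta})}\le C\|\na\bl{h}\|_{L^2(\mS_{R,\zeta})}$ with $C$ depending only on the strip width $1$, hence independent of $\zeta$. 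The same works on $\mS_{L,\zeta}$ once we check that the zero-flux condition transfers: since $\bl{h}$ is divergence-free with $\bl{h}\cdot\bl{n}=0$ on $\p\mS\cap\p\mS_\zeta$, for any $x_1\in(0,\zeta)$ the flux $\int_0^1 h_1(x_1,x_2)\,dx_2$ is independent of $x_1$ (integrate $\na\cdot\bl{h}=0$ over the sub-box between two cross-sections and use the impermeability on top/bottom), so it equals $\Phi=0$ everywhere in $\mS_R$; propagating this conservation through $\mS_0$ gives $\int_0^{c_0}\tilde h_1(y_1,y_2)\,dy_2=0$ for all $y_1\in(-\zeta,0)$ as well, and Lemma \ref{POIN} applies on $\mS_{L,\zeta}$ with constant depending only on $c_0$.

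It remains to control $\|\bl{h}\|_{L^2(\mS_0)}$, and this is the main obstacle, since $\mS_0$ is a fixed distorted domain with obstacles and there is no global zero boundary condition available — only $\bl{h}\cdot\bl{n}=0$ on the part of $\p\mS_0$ lying on $\p\mS$. The strategy the paper signals (trace theorem plus a 2D Payne-type identity) is: first bound the trace of $\bl{h}$ on the two interface cross-sections $\{x_1=1\}$ and $\{y_1=-1\}$ separating $\mS_0$ from the ends. Using the already-established estimates on $\Upsilon_1^\pm$ together with the trace inequality $\|\bl{h}\|_{L^2(\p\Upsilon_1^{\pm}\cap\mS)}\le C\|\bl{h}\|_{H^1(\Upsilon_1^\pm)}\le C\|\na\bl{h}\|_{L^2(\Upsilon_1^\pm)}$, we get control of $\bl{h}$ on these interface segments purely by $\|\na\bl{h}\|_{L^2(\mS_1\setminus\mS_{\text{ends}})}$. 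Then on the bounded domain $\mS_0$ we want an inequality of the shape
\be\label{S0est}
\|\bl{h}\|_{L^2(\mS_0)}\le C\Big(\|\na\bl{h}\|_{L^2(\mS_0)}+\|\bl{h}\|_{L^2(\Gamma_0)}\Big),
\ee
where $\Gamma_0$ denotes the two interface cross-sections. Such an estimate holds for $H^1$ functions on a fixed bounded Lipschitz domain whose boundary data is controlled on a piece of positive measure $\Gamma_0$ — a standard compactness/contradiction argument (if it failed there would be a sequence with $\|\bl{h}_n\|_{L^2(\mS_0)}=1$, $\|\na\bl{h}_n\|_{L^2}\to0$, $\|\bl{h}_n\|_{L^2(\Gamma_0)}\to0$; a weak limit is a constant vector vanishing on $\Gamma_0$, hence zero, contradicting $L^2$-normalization via Rellich). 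The divergence-free and impermeability structure are not even needed for \eqref{S0est} itself, though the zero-flux hypothesis is what makes the boundary-trace term small rather than $O(\Phi)$.

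Finally, combine the three pieces: $\|\bl{h}\|_{L^2(\mS_\zeta)}^2 \le \|\bl{h}\|_{L^2(\mS_{L,\zeta})}^2+\|\bl{h}\|_{L^2(\mS_0)}^2+\|\bl{h}\|_{L^2(\mS_{R,\zeta})}^2 \le C\|\na\bl{h}\|_{L^2(\mS_\zeta)}^2$, where every constant is a fixed geometric constant: the end estimates give strip-width constants, the core estimate \eqref{S0est} and the trace inequality give constants depending only on the fixed geometry of $\mS_0$ and $\Upsilon_1^\pm$, and crucially none of them sees $\zeta$. One cosmetic point to be careful about: \eqref{S0est} needs a little overlap, so it is cleaner to apply the trace/compactness argument on $\mS_1$ (the core plus one unit of each straight end) rather than on $\mS_0$ alone, using the already-proven Poincar\'e estimate on $\mS_{R,1}\cup\mS_{L,1}$ to absorb the interface term; this avoids any issue with traces on the artificial cuts. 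The uniformity in $\zeta$ is then manifest, which is exactly the assertion of the lemma.
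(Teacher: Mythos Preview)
Your proposal is correct and its overall architecture matches the paper's: handle the straight ends $\mS_{L,\zeta}$ and $\mS_{R,\zeta}$ directly by Lemma~\ref{POIN}, then control $\|\bl{h}\|_{L^2}$ on the compact core (in fact on $\mS_1$) by the gradient plus the trace on the two interface segments $\{x_1=1\}$ and $\{y_1=-1\}$, and finally absorb those interface traces using the trace inequality together with the straight-strip Poincar\'e estimate on $\mS_{R,1}$ and $\mS_{L,1}$. The $\zeta$-independence is obtained exactly as you say.

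The genuine difference lies in how you obtain the core estimate \eqref{S0est}. The paper does this constructively via the Payne-type identity
\[
\sum_{i,j}\Big[\p_{x_i}(h_i x_j h_j)-\p_{x_i}h_i\,x_j h_j-|\bl{h}|^2-h_i x_j\p_{x_i}h_j\Big]=0,
\]
integrated over $\mS_1$: the divergence term becomes a boundary integral, the impermeability condition $\bl{h}\cdot\bl{n}=0$ kills the contribution on $\p\mS\cap\p\mS_1$, and what survives is exactly the two interface traces plus a gradient term, yielding \eqref{PCC1} directly. Your route replaces this explicit identity by a Rellich-compactness contradiction argument on the fixed bounded domain $\mS_1$, which is softer and does not use the divergence-free or impermeability structure at all for that step. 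Both are valid; the paper's identity gives an explicit constant tied to the diameter of $\mS_1$ and uses the structural hypotheses throughout, while your approach is shorter to write but non-constructive. Your final remark that the argument should be run on $\mS_1$ rather than $\mS_0$ is well taken and is precisely what the paper does.
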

\pf Integrating the following identity on $\mS_{1}$,
\be\label{Payne3D}
\sum^2_{i,j=1}\lt[\p_{x_i}(h_i x_j h_j)-\p_{x_i} h_ix_jh_j-|\bl{h}|^2-h_ix_j\p_{x_i} h_j\rt]=0,
\ee
one deduces
\be\label{ZJ1}
\bali
\int_{\mS_{1}} |\bl{h}|^2dx=&\underbrace{\sum^2_{i,j=1}\int_{\mS_{1}}\p_i(h_ix_j h_j)dx}_{J_1}-\sum^2_{i,j=1}\int_{\mS_{1}}\p_i h_ix_jh_jdx-\sum^2_{i,j=1}\int_{\mS_{1}} h_ix_j\p_i h_jdx.\\
\eali
\ee
Using the divergence theorem and the boundary condition $\bl{h}\cdot{\boldsymbol{n}}=0$ on $\p\mS_{1}\cap\p\mS$, we can obtain
\[
J_1=\sum^2_{j=1} \int_{\p\mS_1}({\boldsymbol{n}}\cdot \bl{h})x_jh_jdS=\int_{0}^1\left((x\cdot\bl{h})h_1\right)(1,x_2)dx_2-\int_{0}^{c_0}\left((x\cdot\bl{h})\tilde{h}_1\right)(-1,y_2)dy_2.
\]
Thus by \eqref{ZJ1} and the Cauchy-Schwarz inequality, we arrive at
\[
\begin{split}
\int_{\mS_{1}}|\bl{h}|^2dx\leq& \f{1}{2}\int_{\mS_{1}}|\bl{h}|^2dx+C\int_{\mS_{1}}|\na\bl{h}|^2dx\\
&+\left|\int_{0}^1\left((x\cdot\bl{h})h_1\right)(1,x_2)dx_2\right|+\left|\int_{0}^{c_0}\left((x\cdot\bl{h})\tilde{h}_1\right)(-1,y_2)dy_2\right|,
\end{split}
\]
which indicates
\be\label{PCC1}
\int_{\mS_{1}}|\bl{h}|^2dx\leq C\left(\int_{\mS_{1}}|\na\bl{h}|^2dx+\int_{0}^1\left|\bl{h}(1,x_2)\right|^2dx_2+\int_{0}^{c_0}\left|\bl{h}(-1,y_2)\right|^2dy_2\right).
\ee
Meanwhile, using the trace theorem in $\mS_{L,1}$, and Lemma \ref{POIN}, one derives
\be\label{PCC2}
\begin{split}
\int_{0}^{c_0}\left|\bl{h}(-1,y_2)\right|^2dy_2&\leq C\left(\int_{(-1,0)\times(0,c_0)}\left|\bl{h}\right|^2dy_1dy_2+\int_{(-1,0)\times(0,c_0)}\left|\na\bl{h}\right|^2dy_1dy_2\right)\\
&\leq C\int_{\mS_{L,1}}\left|\na\bl{h}\right|^2dx.
\end{split}
\ee
Similarly, one deduces that
\be\label{PCC3}
\int_{0}^{1}\left|\bl{h}(1,x_2)\right|^2dx_2\leq C\int_{\mS_{R,1}}\left|\na\bl{h}\right|^2dx.
\ee
Substituting \eqref{PCC2}--\eqref{PCC3} in the right hand side of \eqref{PCC1}, one deduces
\[
\int_{\mS_{1}}|\bl{h}|^2dx\leq C\int_{\mS_{1}}|\na\bl{h}|^2dx.
\]
Using Lemma \ref{POIN}, it is easy to see that
\[
\int_{\mS_\zeta\backslash\mS_{1}}|\bl{h}|^2dx\leq C\int_{\mS_\zeta\backslash\mS_{1}}|\na\bl{h}|^2dx.
\]
Combining the above two inequalities, we  finish the proof of \eqref{TORPIPEPOIN}.

\qed

After showing Lemma \ref{POIN} and Lemma \ref{TORPOIN}, one concludes the following Poincar\'e inequality in the whole infinite strip:

\begin{corollary}[Poincar\'e inequality in $\mS$]\label{CorPoin}
Let
\[
\bl{g}\in\mathcal{V}:=\left\{\bl{f}=(f_1,f_2)\in H^1(\mS):\,\left(\bl{f}\cdot\bl{n}\right)\big|_{\p\mS}=0,\,\mathrm{div }\,\bl{f}=0\right\},
\]
then the following Poincar\'e inequality holds:
\be\label{TORPIPEPOIN}
\|\bl{g}\|_{L^2(\mS)}\leq C\|\na\bl{g}\|_{L^2(\mS)}.
\ee
Here $C>0$ is a uniform constant.
\end{corollary}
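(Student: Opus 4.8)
The plan is to reduce the Poincaré inequality on the unbounded strip $\mS$ to the two already-established results: Lemma \ref{POIN} on the two straight semi-infinite ends and Lemma \ref{TORPOIN} on the truncated middle piece $\mS_\zeta$. First I would fix a truncation parameter, say $\zeta=2$, and split $\mS=\mS_2\cup(\mS_R\setminus\mS_{R,2})\cup(\mS_L\setminus\mS_{L,2})$. On $\mS_2$ one applies Lemma \ref{TORPOIN} directly: $\bl g\in\mathcal V$ restricts to a divergence-free $H^1$ vector field on $\mS_2$ with zero flux on cross sections of $\mS_R$ (the flux is constant along the strip because $\bl g$ is divergence-free and $\bl g\cdot\bl n=0$ on $\p\mS$, and it is zero since $\int_0^1 g_1\,dx_2=0$ is forced by the zero-flux membership — here I should double-check that ``zero flux'' is part of $\mathcal V$; in fact divergence-free plus $\bl g\cdot\bl n=0$ on $\p\mS$ only gives a \emph{constant} flux, so to invoke Lemma \ref{TORPOIN} I genuinely need that constant to vanish, which for a global $H^1$ field on the infinite strip it must, since $\|\bl g\|_{L^2(\mS_R)}<\infty$ forces the Poiseuille-type flux to be zero).

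Next I would handle the two straight outlets. On $\mS_R\setminus\mS_{R,2}=(2,\infty)\times(0,1)$ the field $\bl g=g_1\bl e_1+g_2\bl e_2$ satisfies $\int_0^1 g_1(x_1,x_2)\,dx_2=0$ for every $x_1>2$ (again from the vanishing flux) and $g_2|_{x_2=0,1}=0$ from the impermeability condition, which are exactly the hypotheses of Lemma \ref{POIN}; applying it on each bounded box $(2,N)\times(0,1)$ with a constant $C\lesssim 1$ independent of $N$ and letting $N\to\infty$ yields $\|\bl g\|_{L^2(\mS_R\setminus\mS_{R,2})}\le C\|\p_{x_2}\bl g\|_{L^2(\mS_R\setminus\mS_{R,2})}\le C\|\na\bl g\|_{L^2(\mS_R\setminus\mS_{R,2})}$. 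The same argument in the $y$-coordinates gives the bound on $\mS_L\setminus\mS_{L,2}$. Summing the three pieces:
\[
\|\bl g\|_{L^2(\mS)}^2=\|\bl g\|_{L^2(\mS_2)}^2+\|\bl g\|_{L^2(\mS_R\setminus\mS_{R,2})}^2+\|\bl g\|_{L^2(\mS_L\setminus\mS_{L,2})}^2\le C\|\na\bl g\|_{L^2(\mS)}^2,
\]
which is \eqref{TORPIPEPOIN}. A minor bookkeeping point is that $\mS_2$ and the two outlet pieces share their interface lines, but since they are measure-zero this causes no double counting in the $L^2$ norms.

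The only genuinely delicate point — and the one I would spend care on — is justifying that the flux of $\bl g$ through every cross section vanishes, i.e. that membership in $\mathcal V$ (global $H^1$, divergence-free, impermeable) really does force $\int_0^1 g_1\,dx_2=0$ on $\mS_R$ rather than merely forcing it to be some constant $\Phi_0$. The argument is that by the divergence theorem the flux is independent of $x_1$, and then $\int_2^\infty\!\!\int_0^1 g_1^2\,dx_2\,dx_1\ge \int_2^\infty \Phi_0^2\,dx_1$ by Cauchy–Schwarz; finiteness of $\|\bl g\|_{L^2(\mS)}$ then forces $\Phi_0=0$. Once this is in hand the rest is a routine assembly of the two preceding lemmas, so I do not expect any further obstruction; everything else is the elementary splitting described above and the fact that the constants in Lemma \ref{POIN} and Lemma \ref{TORPOIN} are uniform (the latter explicitly independent of $\zeta$).
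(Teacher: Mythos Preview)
Your proposal is correct and follows essentially the same approach as the paper: both reduce to gluing Lemma~\ref{POIN} on the straight outlets with Lemma~\ref{TORPOIN} on a truncated middle piece, and both identify the only nontrivial step as showing that the cross-sectional flux of a $\bl g\in\mathcal V$ must vanish, arguing via the divergence theorem that the flux is constant and then using $\|\bl g\|_{L^2(\mS)}<\infty$ to force that constant to be zero. The paper's presentation is slightly terser (it phrases the flux argument as a contradiction with $\bl g\in H^1(\mS)$ rather than invoking Cauchy--Schwarz explicitly), but the substance is identical.
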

\pf This is a direct conclusion by gluing results in Lemma \ref{POIN} and Lemma \ref{TORPOIN} together, after we have shown
\[
\int_{\mS\cap\{x_1=0\}}g_1dx_2=0
\]
holds unconditionally for $\bl{g}\in\mathcal{V}$. By the divergence theorem,
\[
\int_{\mS\cap\{x_1=\mathfrak{s}\}}g_1dx_2-\int_{\mS\cap\{x_1=0\}}g_1dx_2=\int_{\mS\cap\{0<x_1<\mathfrak{s}\}}\mathrm{div }\,\bl{g}dx-\int_{\p\mS\cap\{0<x_1<\mathfrak{s}\}}\left(\bl{g}\cdot\bl{n}\right)dS=0
\]
for any $\mathfrak{s}>0$. Thus if
\[
\left|\int_{\mS\cap\{x_1=0\}}g_1dx_2\right|=c_0>0,
\]
one deduces
\[
c_0\leq\int_{\mS\cap\{x_1=\mathfrak{s}\}}|g_1|dx_2\leq\left|\mS\cap\{x_1=\mathfrak{s}\}\right|^{1/2}\left(\int_{\mS\cap\{x_1=\mathfrak{s}\}}|g_1|^2dx_2\right)^{1/2}.
\]
This implies that, for any $\mathfrak{s}>Z_0$
\[
\int_{\mS\cap\{x_1=\mathfrak{s}\}}|g_1|^2dx_2\geq c_0^2,
\]
which results in a paradox with $\bl{g}\in H^1(\mS)$ .

\qed

Here goes the Korn's inequality in the truncated strip:

\begin{lemma}[Korn's inequality]\label{lem-korn}
Let $\mS_{\zeta}$ with $\zeta>0$ be the finite truncated strip given in \eqref{Szeta}. For any
\[
\bl{g}\in\mathcal{V}_\zeta:=\left\{\bl{f}=(f_1,f_2)\in H^1(\mS_{\zeta};\,\mathbb{R}^2):\,\left(\bl{f}\cdot\bl{n}\right)\big|_{\p\mS\cap \p\mS_{\zeta}}=0,\,\mathrm{div }\,\bl{f}=0,\,\int_{\mS\cap\{x_1=0\}}f_1dx_2=0\right\},
\]
there exists $C>0$, which is independent of $\bl{g}$ or $\zeta$, such that
\be\label{KN1.1}
\|\bl{g}\|^2_{H^{1}(\mS_{\zeta})}\leq C\|\mathbb{S}\bl{g}\|^2_{L^2(\mS_{\zeta})}+2\int_{\{x_1=\zeta\}}|\bl{g}||\na \bl{g}|dx_2+2\int_{\{y_1=-\zeta\}}|\bl{g}||\na \bl{g}|dy_2.
\ee
\end{lemma}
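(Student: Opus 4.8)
The plan is to argue by contradiction, exploiting the compactness of the distorted part $\mS_0$ together with the Poincaré inequality already established in Corollary \ref{CorPoin}. Suppose the estimate fails. Then there is a sequence $\zeta_k>0$ and $\bl{g}_k\in\mathcal{V}_{\zeta_k}$ with $\|\bl{g}_k\|_{H^1(\mS_{\zeta_k})}=1$ but
\[
\|\mathbb{S}\bl{g}_k\|^2_{L^2(\mS_{\zeta_k})}+2\int_{\{x_1=\zeta_k\}}|\bl{g}_k||\na\bl{g}_k|\,dx_2+2\int_{\{y_1=-\zeta_k\}}|\bl{g}_k||\na\bl{g}_k|\,dy_2\to0.
\]
First I would dispose of the boundary terms: on the straight strips $\mS_R$ and $\mS_L$ the field $\bl{g}_k$ has zero flux on every cross-section (this follows as in the proof of Corollary \ref{CorPoin}, since $\int_{\{x_1=0\}}(g_k)_1\,dx_2=0$ and $\bl{g}_k$ is divergence-free with $\bl{g}_k\cdot\bl{n}=0$ on the lateral boundary), so one can integrate the identity $\partial_{x_2}\bl{g}_k$ against a cutoff to push the cross-sectional $L^2$ norm of $\bl{g}_k$ at $x_1=\zeta_k$ into $\|\na\bl{g}_k\|_{L^2(\Upsilon^+_{\zeta_k})}$, and then a Cauchy–Schwarz/averaging argument over $\zeta_k\in[\zeta_k-1,\zeta_k]$ shows the boundary contributions are controlled by $\|\na\bl{g}_k\|^2_{L^2(\mS_{\zeta_k}\setminus\mS_{\zeta_k-1})}$, which is harmless; in fact by possibly shifting $\zeta_k$ by a bounded amount we may assume the boundary integrals themselves tend to $0$ and so $\|\mathbb{S}\bl{g}_k\|_{L^2(\mS_{\zeta_k})}\to0$.

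Next I would localize. On the straight pieces, the two-dimensional symmetric-gradient identity (for planar divergence-free fields, $2|\mathbb{S}\bl{v}|^2 = |\na\bl{v}|^2 + $ a divergence/null-Lagrangian term) combined with Lemma \ref{POIN} gives, after integrating by parts, that $\|\na\bl{g}_k\|_{L^2(\Upsilon^\pm_j)}$ is controlled by $\|\mathbb{S}\bl{g}_k\|_{L^2(\Upsilon^\pm_j)}$ up to cross-sectional boundary terms at $x_1=j$ and $x_1=j\mp1$; summing a telescoping chain of these over $j$ (the "decomposing-summarizing" device mentioned in the outline) and using that the interior cross-sectional terms cancel in pairs, one obtains $\|\na\bl{g}_k\|_{L^2(\mS_{\zeta_k}\setminus\mS_1)}\le C\|\mathbb{S}\bl{g}_k\|_{L^2(\mS_{\zeta_k})}+(\text{controlled boundary terms})\to0$. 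Hence all the mass of $\bl{g}_k$ concentrates, in the limit, on the fixed compact region $\mS_1$. Restricting to $\mS_1$, the sequence is bounded in $H^1(\mS_1)$, so along a subsequence $\bl{g}_k\rightharpoonup\bl{g}$ weakly in $H^1(\mS_1)$ and strongly in $L^2(\mS_1)$, with $\mathbb{S}\bl{g}=0$ and $\na\cdot\bl{g}=0$, $\bl{g}\cdot\bl{n}=0$ on $\p\mS\cap\p\mS_1$. By the classical characterization of fields with vanishing symmetric gradient on a connected open set, $\bl{g}$ is an infinitesimal rigid motion, $\bl{g}(x)=\bl{b}+\omega x^\perp$; the constraints $\bl{g}\cdot\bl{n}=0$ on the curved portion of $\p\mS_-$ inside $\mS_0$ — which is genuinely curved, so its normal is not constant — together with $\bl{g}\cdot\bl{n}=0$ on the flat pieces of $\p\mS_1$, force $\omega=0$ and $\bl{b}=0$, i.e. $\bl{g}\equiv0$ on $\mS_1$, hence on all of $\mS_{\zeta_k}$ in the limit. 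This contradicts $\|\bl{g}_k\|_{H^1}=1$ once we upgrade the strong convergence from $\mS_1$ to the whole truncated strip, which is exactly what the decay estimate $\|\na\bl{g}_k\|_{L^2(\mS_{\zeta_k}\setminus\mS_1)}\to0$ and the Poincaré inequality (Lemma \ref{POIN}) on the outer part provide.

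The main obstacle is making the contradiction argument uniform in $\zeta$: the domains $\mS_{\zeta_k}$ grow, so one cannot simply invoke Rellich compactness on the whole sequence, and one must genuinely show that no $H^1$-mass escapes to the two straight ends. This is handled precisely by the telescoping Korn-plus-Poincaré estimate on the straight strips together with the averaging trick on $\zeta_k$ to neutralize the right-hand boundary integrals; the planar divergence-free identity $|\na\bl{v}|^2=2|\mathbb{S}\bl{v}|^2-2\det(\na\bl{v})$ and the null-Lagrangian nature of $\det(\na\bl{v})$ (so its integral reduces to boundary data) is what makes the straight-strip step work with a constant independent of the length. A secondary subtlety is verifying that the curvature of $\p\mS_-$ really kills the rotational part of the limiting rigid motion; this is where the hypothesis that $\mS_0$ contains a genuinely distorted (nonstraight) boundary arc is used, and it is the 2D feature that has no 3D analogue (cf. Remark \ref{3korn}).
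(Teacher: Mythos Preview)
Your overall architecture (null-Lagrangian identity for $\det(\nabla\bl{g})$, compactness/contradiction, rigid-motion classification) matches the paper's, but the way you organize it leaves a genuine gap. In your localization step you integrate the identity $|\nabla\bl{g}|^2=2|\mathbb{S}\bl{g}|^2+2\det(\nabla\bl{g})$ over $\mS_{\zeta_k}\setminus\mS_1$ and telescope; the interior cross-sections do cancel, but you are left with boundary terms at $x_1=1$ and $y_1=-1$ of the form $\int_{\{x_1=1\}}g_i\partial_{x_i}g_1\,dx_2$, which are only bounded by $\|\bl{g}_k\|_{H^1(\mS_2)}^2\le 1$ and do \emph{not} tend to zero. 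Hence your claimed decay $\|\nabla\bl{g}_k\|_{L^2(\mS_{\zeta_k}\setminus\mS_1)}\to 0$ is unproven, and without it the final contradiction (which relies on all the $H^1$-mass being captured on the fixed compact piece) does not close.

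The paper avoids this by running the same integration-by-parts computation on the \emph{entire} $\mS_\zeta$ at once: then the lateral boundary contribution is exactly $-\int_{\partial\mS\cap\partial\mS_\zeta}\kappa|\bl{g}|^2\,dS$, and since $\kappa\equiv 0$ on the straight parts this reduces to an integral over the fixed compact set $\partial\mS\cap\partial\mS_0$. One obtains directly
\[
\|\bl{g}\|_{H^1(\mS_\zeta)}^2\le C\big(\|\mathbb{S}\bl{g}\|_{L^2(\mS_\zeta)}^2+\|\bl{g}\|_{L^2(\partial\mS\cap\partial\mS_0)}^2\big)+\text{(end terms)},
\]
with $C$ manifestly independent of $\zeta$, and the only remaining task is the fixed-domain estimate $\|\bl{g}\|_{L^2(\partial\mS\cap\partial\mS_0)}^2\le\frac{1}{2C}\|\bl{g}\|_{H^1}^2+C'\|\mathbb{S}\bl{g}\|_{L^2}^2$, for which a standard Rellich/contradiction argument on $\mS_1$ suffices. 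A minor correction to your rigid-motion step: curvature is not what eliminates the limit. On the flat portion $\{x_2=0\}\cap\partial\mS_R$ the condition $\bl{h}_*\cdot\bl{n}=0$ already forces $(h_*)_2=\omega x_1+b_2\equiv 0$, hence $\omega=b_2=0$; then $b_1=0$ comes from the zero-flux constraint, not from the geometry of $\mS_0$. The role of the compact distortion is only that $\mathrm{supp}\,\kappa$ is compact, which is what makes the reduction above $\zeta$-uniform.
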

\pf Noting that
\be\label{KN655}
\begin{aligned}
\int_{\mS_{\zeta}}|\mathbb{S}\bl{g}|^{2} dx &=\frac{1}{2} \int_{\mS_{\zeta}} \sum_{i, j=1}^{2}\left(\p_{x_j}g_{ i}+\p_{x_i}g_{j}\right)^{2} dx \\
&=\int_{\mS_{\zeta}} \sum_{i, j=1}^{2}\left(\left(\p_{x_j}g_{i}\right)^{2}+\p_{x_j}g_{i}\p_{x_i}g_{j}\right) dx \\
&=\|\nabla \bl{g}\|_{L^{2}(\mS_{\zeta})}^{2}+\un{\int_{\mS_{\zeta}} \sum_{i, j=1}^{2}\p_{x_j}g_{i}\p_{x_i}g_{j}dx}_{K_1}.
\end{aligned}
\ee
For the last term of \eqref{KN655}, we can assume that $\bl{g}\in C^{2}(\mS_{\zeta})$  without loss of generality. By integration by parts, we get

\be\label{KN111}
\begin{aligned}
K_1=&-\int_{\mS_{\zeta}} g_{i}\p^2_{x_ix_j}g_jdx+\int_{\partial \mS_{\zeta}\cap\p\mS } \sum_{i, j=1}^{2} n_{j}g_{i} \p_{x_i}g_{j} dS+\int_{\partial \mS_{\zeta}\cap \{x_1=\pm \zeta\} } \sum_{i, j=1}^{2} n_{j}g_{i} \p_{x_i}g_{j} dx_2 \\
=& \int_{\mS} (\mathrm{div}\,\bl{g})^2dx-\int_{\partial \mS_{\zeta}\cap\p\mS} \sum_{i, j=1}^{2} n_{i}g_{i}\p_{x_j}g_{j} dS+\int_{\partial \mS_{\zeta}\cap\p\mS} \sum_{i, j=1}^{2} n_{j}g_{i} \p_{x_i}g_{j} dS\\
&-\int_{\{x_1=\zeta\}} \sum_{i, j=1}^{2} n_{i}g_{i}\p_{x_j}g_{j} dx_2-\int_{\{y_1=-\zeta\}} \sum_{i, j=1}^{2} n_{i}g_{i}\p_{x_j}g_{j} dy_2\\
&+\int_{\{x_1=\zeta\}} \sum_{i, j=1}^{2} n_{j}g_{i} \p_{x_i}g_{j} dx_2+\int_{\{y_1=-\zeta\}} \sum_{i, j=1}^{2} n_{j}g_{i} \p_{x_i}g_{j} dy_2.
\end{aligned}
\ee
The first, fourth and fifth terms on the far right of above equation vanish owing to the divergence-free property of $\bl{g}$, and the second one also vanishes because $\left.\bl{g} \cdot \bl{n}\right|_{\partial \mS}=0$. Meanwhile, the condition $\left.\bl{g} \cdot \bl{n}\right|_{\partial \mS}=0$ also implies that, at the boundary,

\be\label{KN222}
\sum_{i=1}^{2}g_i\p_{x_i}(\bl{g}\cdot \bl{n})=0 \quad \text { or } \quad \sum_{i,j=1}^{2} g_i\p_{x_i}g_{j} n_{j}=-\sum_{i,j=1}^{2} g_ig_{j} \p_{x_i}n_{j}.
\ee
Using \eqref{KN111} to \eqref{KN222}, we get

\[
\bali
K_1=&-\int_{\p\mS\cap\p\mS_\zeta }\kappa(x)|\bl{g}|^2dS+\int_{\{x_1=\zeta\}} \sum_{i, j=1}^{2} n_{j}g_{i} \p_{x_i}g_{j} dx_2+\int_{\{y_1=-\zeta\}} \sum_{i, j=1}^{2} n_{j}g_{i} \p_{x_i}g_{j} dy_2.
\eali
\]
where $\kappa(x)$ is the curvature of the boundary $\p\mS$. By definition of $\mS$, we have $\kappa(x) \equiv 0$ on $\p\mS\backslash\p\mS_0$, and $\bl{n}=\bl{e_1}$ on $\p\mS_{\zeta} \cap \{x_1=\zeta\}$, while $\bl{n}=-\bl{e_1}'$ on $\p\mS_{\zeta}\cap\{y_1=-\zeta\}$. This guarantees that
\be\label{KN699}
|K_1|\leq\int_{\p\mS\cap\p\mS_0}|\kappa(x)||\bl{g}|^2dS+\int_{\{x_1=\zeta\}} |\bl{g}| |\na \bl{g}| dx_2+\int_{\{y_1=-\zeta\}} |\bl{g}| |\na \bl{g}| dy_2.
\ee
Substituting \eqref{KN111}--\eqref{KN699} in \eqref{KN655}, one concludes
\[
\|\nabla \bl{g}\|_{L^{2}(\mS_\zeta)}^{2}\leq \int_{\mS_\zeta}|\mathbb{S}\bl{g}|^{2} dx+\int_{\p\mS\cap\p\mS_0}|\kappa(x)||\bl{g}|^2dS+\int_{\{x_1=\zeta\}} |\bl{g}| |\na \bl{g}| dx_2+\int_{\{y_1=-\zeta\}} |\bl{g}| |\na \bl{g}| dy_2.
\]
Noting that $\|\kappa(x)\|_{L^\i(\p\mS)}$ is uniformly bounded due to the smoothness of $\p\mS$ and combining with the Poincar\'e inequalities in Lemma \ref{POIN} and Lemma \ref{TORPOIN}, one deduces that there exists $C>0$ that
$$
\|\bl{g}\|_{H^1(\mS_\zeta)}^{2} \leq C\left(\int_{\mS_\zeta}|\mathbb{S}\bl{g}|^{2} dx+\|\bl{g}\|_{L^{2}(\p\mS\cap\p\mS_0)}^{2}\right)+\int_{\{x_1=\zeta\}} |\bl{g}| |\na \bl{g}| dx_2+\int_{\{y_1=-\zeta\}} |\bl{g}| |\na \bl{g}| dy_2.
$$

To finish the proof, one only needs to show that there exists $C>0$ such that:
\be\label{KN611}
\|\bl{g}\|_{L^{2}(\p\mS\cap\p\mS_0)}^{2} \leq\frac{1}{2 C}\|\bl{g}\|_{H^1(\mS_{\zeta})}^{2}+C\int_{\mS_{\zeta}}|\mathbb{S}\bl{g}|^{2} dx .
\ee
We prove this by the method of contradiction. If a number the above $C$ does not exist, then there exists a bounded sequence $\left\{\bl{g}_{m}\right\}_{m=0}^{\infty}\subset\mathcal{V}_{\zeta}$ such that
\[
\left\|\bl{g}_{m}\right\|_{L_{2}(\p\mS\cap\p\mS_0)}^{2} \geq \frac{1}{2 C_{1}}\left\|\bl{g}_{m}\right\|_{H^1(\mS_{\zeta})}^{2}+m \int_{\mS_{\zeta}}|\mathbb{S}\bl{g}_{m}|^{2} dx.
\]
Denoting $\bl{h}_{m}=\bl{g}_{m} /\left\|\bl{g}_{m}\right\|_{L^{2}(\p\mS\cap\p\mS_0)}$, one deduces that

\be\label{KN613}
\left\|\bl{h}_{m}\right\|_{L^{2}(\p\mS\cap\p\mS_0)}=1 \quad \text { and } \quad m \int_{\mS_{\zeta}}|\mathbb{S}\bl{h}_{m}|^{2} dx \leq 1.
\ee
Since the sequence $\left\{\bl{g}_{m}\right\}$ is bounded in $\mathcal{V}_{\zeta}$, we can choose a subsequence $\left\{\bl{h}_{m_{k}}\right\}_{k=0}^{\infty}$ which is weakly convergent in $H^1(\mS_{\zeta})$ and strongly in $L^{2}(\p\mS\cap\p\mS_0)$ to a vector $\bl{h}_{*} \in \mathcal{V}_{\zeta}$. Particularly,
\[
\mathbb{S}\bl{h}_{m_{k}}\to\mathbb{S}\bl{h}_*,\q\text{weakly in }L^2(\mS_{\zeta}).
\]
By \eqref{KN613}, one knows
$$
\int_{\mS_{\zeta}}|\mathbb{S}\bl{h}_{m_{k}}|^{2} dx \leq \frac{1}{m_{k}} \rightarrow 0,\q\text{as}\q k\to\i.
$$
Thus one deduces
\[
\int_{\mS_{\zeta}}|\mathbb{S}\bl{h}_{*}|^{2} \mathrm{~d} x\leq\liminf_{k\to\i}\int_{\mS_{\zeta}}|\mathbb{S}\bl{h}_{m_{k}}|^{2} dx=0,
\]
by the Fatou's lemma for weakly convergent sequences. This concludes $\mathbb{S}\bl{h}_{*}\equiv0$ in $\mS_{\zeta}$. It is well known that $\bl{h}_*$ has the form $\bl{h}_*=Ax+B$ (see \cite[\S 6]{KO:1988RMS}), where $A$ is a constant skew-symmetric matrix with constant entries and $B$ is a constant vector, that is,
\begin{equation*}
	\bl{h}_*=
	\begin{pmatrix}
		0 & -a\\
		a & 0
	\end{pmatrix}
	\begin{pmatrix}
		x_1 \\
		x_2
	\end{pmatrix}
	+\begin{pmatrix}
		b_1 \\
		b_2
	\end{pmatrix}=
	\begin{pmatrix}
		-a\,x_2+b_1	 \\
		a\,x_1+b_2
	\end{pmatrix}\,,
\end{equation*}
where $a\,,b_i$ ($i=1\,,2$) are some constants. However, by the boundary condition $\bl{h}_*\cdot\bl{n}=0$ holds everywhere on $\p\mS\cap\p\mS_{\zeta}$, one has
\[
(h_*)_2=ax_1+b_2\equiv0,\q\text{for all}\q 0<x_1<\zeta,
\]
which indicates $a=b_2\equiv 0$. This indicates $(h_*)_1=b_1$ and thus $b_1=0$ due to
\[
\int_0^1(h_*)_1(x_1,x_2)dx_2=0,\q\text{for all}\q 0<x_1<\zeta.
\]
Therefore one concludes $\bl{h}_*\equiv0$ in $\mS_{\zeta}$. However, this creates a paradox to the fact
\[
\|\bl{h}_*\|_{L^2(\p\mS\cap\p\mS_0)}=1
\]
coming from \eqref{KN613}. This indicates the validity of \eqref{KN611} and therefore one concludes \eqref{KN1.1}.

\qed

If we replace the truncated strip with the infinite strip $\mS$, the result in Lemma \ref{lem-korn} will be simpler with boundary term integrations on the segments $\{x_1=\zeta\}$ and $\{y_1=-\zeta\}$ disappearing. We have the following Corollary.

\begin{corollary}\label{cor-korn}
Let $\mS$ be the infinite strip given in the previous section. For any $\bl{g}\in \mathcal{V}$, there exists $C>0$, which is independent of $\bl{g}$, such that
\be\label{KN1.1plus}
\|\bl{g}\|_{H^{1}(\mS)}\leq C\|\mathbb{S}\bl{g}\|_{L^2(\mS)}.
\ee
\end{corollary}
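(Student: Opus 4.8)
The plan is to pass to the limit $\zeta \to \infty$ in Lemma \ref{lem-korn}, but the cleanest route is actually to redo the integration-by-parts argument of Lemma \ref{lem-korn} directly on the whole strip $\mS$, where the boundary segments $\{x_1=\zeta\}$ and $\{y_1=-\zeta\}$ simply do not appear. So first I would take $\bl{g}\in\mathcal{V}$; by density we may assume $\bl{g}\in C^2$ with all relevant quantities decaying at spatial infinity (this decay is legitimate because $\bl{g}\in H^1(\mS)$, so a further truncation/approximation argument gives vanishing boundary contributions at infinity). Repeating the computation in \eqref{KN655}--\eqref{KN699} verbatim, the only surviving term after using $\na\cdot\bl{g}=0$ and $\bl{g}\cdot\bl{n}\big|_{\p\mS}=0$ is the curvature term $-\int_{\p\mS\cap\p\mS_0}\kappa(x)|\bl{g}|^2\,dS$, since $\kappa\equiv 0$ on $\p\mS\setminus\p\mS_0$. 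This yields
\[
\|\na\bl{g}\|_{L^2(\mS)}^2 \le \int_{\mS}|\mathbb{S}\bl{g}|^2\,dx + \|\kappa\|_{L^\infty(\p\mS)}\,\|\bl{g}\|_{L^2(\p\mS\cap\p\mS_0)}^2,
\]
and combined with the Poincar\'e inequality in Corollary \ref{CorPoin} (which applies precisely because $\mathcal{V}$ is its hypothesis class) we get $\|\bl{g}\|_{H^1(\mS)}^2 \lesssim \int_{\mS}|\mathbb{S}\bl{g}|^2\,dx + \|\bl{g}\|_{L^2(\p\mS\cap\p\mS_0)}^2$.

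The remaining task, exactly as in Lemma \ref{lem-korn}, is to absorb the boundary term: I would show there is $C>0$ with
\[
\|\bl{g}\|_{L^2(\p\mS\cap\p\mS_0)}^2 \le \tfrac{1}{2C}\|\bl{g}\|_{H^1(\mS)}^2 + C\int_{\mS}|\mathbb{S}\bl{g}|^2\,dx.
\]
This is where the main subtlety lies. In Lemma \ref{lem-korn} the contradiction argument used weak $H^1$-compactness on the \emph{bounded} domain $\mS_\zeta$; on the unbounded $\mS$ one does not have compact embedding $H^1(\mS)\hookrightarrow L^2(\mS)$, so I cannot directly extract an $L^2(\mS)$-convergent subsequence. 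The fix: $\p\mS\cap\p\mS_0$ is a \emph{compact} piece of the boundary, and $\mS_0$ (or $\mS_2$, say) is a bounded domain; the trace operator $H^1(\mS_2)\to L^2(\p\mS\cap\p\mS_0)$ is compact. So run the contradiction argument with $\bl{h}_m = \bl{g}_m/\|\bl{g}_m\|_{L^2(\p\mS\cap\p\mS_0)}$, extract a subsequence converging weakly in $H^1(\mS)$, use compactness of the trace on the bounded collar to get strong $L^2(\p\mS\cap\p\mS_0)$ convergence to $\bl{h}_*\in\mathcal{V}$, and conclude $\mathbb{S}\bl{h}_*\equiv 0$ via Fatou for weakly convergent sequences, exactly as before. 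Then $\bl{h}_* = Ax+B$ is a rigid motion; the impermeability condition $\bl{h}_*\cdot\bl{n}=0$ on the straight parts forces the rotation $a=0$ and $b_2=0$, and the zero-flux condition $\int_0^1(h_*)_1\,dx_2=0$ forces $b_1=0$, so $\bl{h}_*\equiv 0$, contradicting $\|\bl{h}_*\|_{L^2(\p\mS\cap\p\mS_0)}=1$.

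Alternatively — and this is probably the shortest writeup — I would simply invoke Lemma \ref{lem-korn} for each $\zeta$, note that for $\bl{g}\in\mathcal{V}$ the restriction $\bl{g}|_{\mS_\zeta}$ lies in $\mathcal{V}_\zeta$, and then argue that because $\bl{g}\in H^1(\mS)$ the traces $\int_{\{x_1=\zeta\}}|\bl{g}||\na\bl{g}|\,dx_2$ and $\int_{\{y_1=-\zeta\}}|\bl{g}||\na\bl{g}|\,dy_2$ are integrable in $\zeta$ over $(1,\infty)$, hence have a sequence $\zeta_k\to\infty$ along which they tend to zero; passing to the limit along $\zeta_k$ in \eqref{KN1.1} and using monotone convergence on the left gives \eqref{KN1.1plus}. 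The one point requiring a line of justification is that, since $\int_1^\infty\!\big(\int_{\{x_1=\zeta\}}|\bl{g}||\na\bl{g}|\,dx_2\big)d\zeta \le \|\bl{g}\|_{L^2(\mS_R)}\|\na\bl{g}\|_{L^2(\mS_R)}<\infty$ by Fubini and Cauchy--Schwarz, the integrand cannot be bounded below along every sequence, yielding the required $\zeta_k$. Either way the only genuine obstacle is handling the non-compactness of $\mS$; both routes sidestep it, the first by localizing the compactness to the bounded collar $\p\mS\cap\p\mS_0$, the second by exploiting $H^1$-integrability of the end fluxes.
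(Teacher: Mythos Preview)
Your proposal is correct; the paper itself does not spell out a proof but simply remarks that the boundary integrals on $\{x_1=\zeta\}$ and $\{y_1=-\zeta\}$ disappear, and your second route (applying Lemma~\ref{lem-korn} and passing to the limit along a sequence $\zeta_k\to\infty$ on which the end-fluxes vanish, using integrability of $|\bl{g}||\na\bl{g}|$ over $\mS$) is precisely the rigorous justification of that remark. Your first route is a valid self-contained alternative, and in fact on the full strip the contradiction step is even shorter than you indicate: once $\bl{h}_*\in H^1(\mS)$ satisfies $\mathbb{S}\bl{h}_*=0$, the rigid motion $Ax+B$ must lie in $L^2(\mS)$, which already forces $A=0$ and $B=0$ without invoking the boundary or zero-flux conditions.
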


\qed

\begin{remark}\label{3korn}
Here let us give a brief explanation why this Korn's inequality fails to be valid in a 3D infinite pipe. Consider the vector
\[
\bl{w}=(-\xi(x_3)x_2\,,\,\xi(x_3)x_1\,,\,0)
\]
given in the cylindrical pipe $\mathcal{D}=B\times\mathbb{R}$, where $B$ is the unit disk in $\mathbb{R}^2$, and $\xi$ is a smooth cut-off function that:
\[
\xi(x_3)=\left\{
\begin{aligned}
1\,,&\q\q x_3\in[-R,R];\\
0\,,&\q\q x_3\in\mathbb{R}\backslash(-R-1,R+1),
\end{aligned}
\right.
\]
with
\[
|\xi'(x_3)|\leq 2,\q\text{for any}\q x_3\in(-R-1,R)\cup(R,R+1).
\]
One notices that $\bl{w}$ is divergence-free and it satisfies $\bl{w}\cdot\bl{n}\equiv0$ on $\p B\times\mathbb{R}$, also its flux in the cross section $B\times\{x_3=0\}$ is zero.

For the convenience of calculation, we introduce the cylindrical coordinates:
\[
\boldsymbol{e_r}=(\frac{x_1}{r},\frac{x_2}{r},0),\quad \boldsymbol{e_\th}=(-\frac{x_2}{r},\frac{x_1}{r},0),\quad \boldsymbol{e_z}=(0,0,1),
\]
and we find
\[
\bl{w}=\xi(z)r\bl{e_\th}.
\]
Using equation (A.4) in \cite{LP:2021ARXIV}, one finds
\[
\mathbb{S}\bl{w}=\f{1}{2}\xi'(z)r\left(\bl{e_\th}\otimes\bl{e_z}+\bl{e_z}\otimes\bl{e_\th}\right).
\]
This indicates
\[
\int_{\mathcal{D}}|\mathbb{S}\bl{w}|^2dx=O(1)
\]
which is independent with $R$. On the other hand
\[
\int_{\mathcal{D}}|\nabla\bl{w}|^2dx\geq\int_{\mathcal{D}}|\p_2w_1|^2dx\geq 2\pi R.
\]
Noting that $R>0$ is arbitrary, one could not find a uniform constant $C>0$ such that a ``3D version" \eqref{KN1.1} or \eqref{KN1.1plus} holds.
\end{remark}

\qed

\begin{remark}
In the 3-dimensional case, the curvature of the domain boundary $\kappa(x)$ no longer has compact support. In this case one cannot find a subsequence $\left\{\bl{h}_{m_{k}}\right\}_{k=0}^{\infty}$ which is strongly convergent in $L^{2}(\p\mathcal{D})$ to a vector $\bl{h}_{*}$. That is why our method in the proof of Lemma \ref{lem-korn} fails in the 3-dimensional case.
\end{remark}

\qed

\subsection{Other useful lemmas}
The following Brouwer's fixed point theorem is crucial to establish the existence. See \cite{Lions1969} or \cite[Lemma IX.3.1]{Galdi2011}.
\begin{lemma}\label{FUNC}
	Let $P$ be a continuous operator which maps $\mathbb{R}^N$ into itself, such that for some $\rho>0$
	\[
	P({\xi})\cdot{\xi}\geq 0 \quad \text { for all } {\xi} \in\mathbb{R}^N \text { with }\,|{\xi}|=\rho.
	\]
	Then there exists ${\xi}_{0} \in\mathbb{R}^N$ with $|{\xi}_{0}| \leq \rho$ such that ${P}({\xi}_{0})=0$.
\end{lemma}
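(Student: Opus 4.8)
The final statement to prove is Lemma \ref{FUNC}, which is a classical consequence of Brouwer's fixed point theorem. Let me sketch a proof.

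The plan is to argue by contradiction. Suppose $P(\xi) \neq 0$ for all $\xi$ in the closed ball $\overline{B_\rho}$ of radius $\rho$. Then I would construct a continuous map from $\overline{B_\rho}$ to its boundary sphere $\partial B_\rho$ that has no fixed point, contradicting Brouwer's theorem (in the form: every continuous self-map of a closed ball has a fixed point, equivalently there is no continuous retraction of the ball onto its boundary sphere).

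Concretely, first I would note that $P$ being continuous and nonvanishing on the compact set $\overline{B_\rho}$ means $|P(\xi)|$ is bounded below by a positive constant there. Define the map
$$
Q(\xi) := -\rho\,\frac{P(\xi)}{|P(\xi)|}, \qquad \xi \in \overline{B_\rho}.
$$
This is continuous and maps $\overline{B_\rho}$ into $\partial B_\rho \subset \overline{B_\rho}$. By Brouwer's fixed point theorem, $Q$ has a fixed point $\xi_* \in \overline{B_\rho}$, i.e. $\xi_* = Q(\xi_*)$. Since $Q$ takes values in $\partial B_\rho$, we must have $|\xi_*| = \rho$. But then $\xi_* = -\rho P(\xi_*)/|P(\xi_*)|$, so taking the inner product with $\xi_*$ gives
$$
\rho^2 = |\xi_*|^2 = \xi_* \cdot \xi_* = -\frac{\rho}{|P(\xi_*)|}\, P(\xi_*)\cdot \xi_* \le 0,
$$
using the hypothesis $P(\xi_*)\cdot\xi_* \ge 0$ valid since $|\xi_*| = \rho$. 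This is a contradiction, so $P$ must vanish somewhere in $\overline{B_\rho}$.

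The only real content here is Brouwer's fixed point theorem itself (or its no-retraction form), which is assumed as a known tool; the rest is a routine normalization argument. The mild technical point one should check is that $Q$ is genuinely well-defined and continuous, which follows from $\inf_{\overline{B_\rho}} |P| > 0$ by compactness — this is where the contradiction hypothesis is used. There is no serious obstacle; I would present this as a short self-contained paragraph and, for completeness, cite \cite{Lions1969} or \cite[Lemma IX.3.1]{Galdi2011} as the paper already does.
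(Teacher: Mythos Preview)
Your argument is correct and is exactly the classical proof found in the references the paper cites (\cite{Lions1969}, \cite[Lemma IX.3.1]{Galdi2011}); the paper itself does not supply a proof but simply defers to those sources, so there is nothing further to compare.
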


\qed

The following asymptotic estimate of a function that satisfies an ordinary differential inequality will be useful in our further proof. To the best of the authors' knowledge, it was originally derived by Ladyzhenskaya-Solonnikov in \cite{Lady-Sol1980}. We also refer readers to \cite[Lemma 2.7]{LPY:2022ARXIV} for a proof written in a relatively recent format.
\begin{lemma}\label{LEM2.3}
Let $Y(\zeta)\nequiv 0$ be a nondecreasing nonnegative differentiable function satisfying
\[
Y(\zeta)\leq\Psi(Y'(\zeta)),\q\forall\zeta>0.
\]
Here $\Psi:\,[0,\infty)\to[0,\infty)$ is a monotonically increasing function with $\Psi(0)=0$ and there exists $C,\,\tau_1>0$, $m>1$, such that
\[
\Psi(\tau)\leq C\tau^m,\q\forall\tau>\tau_1.
\]
Then
\[
\liminf_{\zeta\to+\infty}\zeta^{-\f{m}{m-1}}Y(\zeta)>0.
\]
\end{lemma}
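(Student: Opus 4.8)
\textbf{Proof plan for Lemma \ref{LEM2.3}.}

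The plan is to extract a differential inequality for $Y$ of the form $Y(\zeta) \le C (Y'(\zeta))^m$ valid for all large $\zeta$, then integrate it to obtain a power growth lower bound. First I would observe that since $Y \nequiv 0$ is nondecreasing and nonnegative, there is some $\zeta_0 \ge 0$ with $Y(\zeta_0) =: y_0 > 0$; for $\zeta \ge \zeta_0$ we have $Y(\zeta) \ge y_0 > 0$. The hypothesis $Y(\zeta) \le \Psi(Y'(\zeta))$ together with $\Psi$ increasing forces $Y'(\zeta)$ to stay bounded away from $0$: indeed if $Y'(\zeta) \le \tau_1$ then $Y(\zeta) \le \Psi(\tau_1)$, so once $Y(\zeta) > \Psi(\tau_1)$ (which happens for $\zeta$ large, unless $Y$ is bounded — see below) we get $Y'(\zeta) > \tau_1$, and hence the bound $\Psi(\tau) \le C\tau^m$ applies, giving
\[
Y(\zeta) \le \Psi(Y'(\zeta)) \le C (Y'(\zeta))^m, \qquad \text{i.e.} \qquad Y'(\zeta) \ge C^{-1/m} Y(\zeta)^{1/m}.
\]

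Next I would handle the dichotomy. If $Y$ is bounded on $[0,\infty)$, say $Y \le M$, then I still need $Y'(\zeta) > \tau_1$ eventually: this is where I would argue that a bounded $Y$ with $Y(\zeta) \le \Psi(Y'(\zeta))$ is impossible unless... actually the cleaner route is to note that if $Y' \le \tau_1$ on an unbounded set is allowed, but then on that set $Y \le \Psi(\tau_1)$; so either $Y(\zeta) \le \Psi(\tau_1)$ for all large $\zeta$ — but $Y$ is nondecreasing, so then $Y$ is bounded by $\Psi(\tau_1)$ everywhere, and I claim this still yields the conclusion because I only need a \emph{lower} bound and can instead show $Y \equiv$ const is incompatible with $Y \nequiv 0$ and the inequality. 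Hmm — more carefully: a bounded nondecreasing $Y$ tends to a limit $L \ge y_0 > 0$, so $Y(\zeta) \to L$; but then for the inequality $Y(\zeta) \le \Psi(Y'(\zeta))$ to hold with $Y(\zeta) \ge y_0/2 > 0$ we need $Y'(\zeta) \ge \Psi^{-1}(y_0/2) > 0$ on the whole ray (using $\Psi$ increasing, $\Psi(0)=0$), which contradicts $Y$ bounded. Hence $Y$ is unbounded, $Y(\zeta) \to \infty$, so eventually $Y(\zeta) > \Psi(\tau_1)$ and the displayed differential inequality holds for $\zeta \ge \zeta_1$ for some $\zeta_1$.

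Finally I would integrate. From $Y'(\zeta) Y(\zeta)^{-1/m} \ge C^{-1/m}$ for $\zeta \ge \zeta_1$, integrating over $[\zeta_1,\zeta]$ gives
\[
\frac{m}{m-1}\Big( Y(\zeta)^{\frac{m-1}{m}} - Y(\zeta_1)^{\frac{m-1}{m}} \Big) \ge C^{-1/m} (\zeta - \zeta_1),
\]
so $Y(\zeta)^{\frac{m-1}{m}} \ge c\,\zeta - c'$ for constants $c>0$, $c'$ depending on $C,m,\zeta_1,Y(\zeta_1)$, whence $Y(\zeta) \ge (c\zeta - c')^{\frac{m}{m-1}}$ and therefore
\[
\liminf_{\zeta\to\infty} \zeta^{-\frac{m}{m-1}} Y(\zeta) \ge c^{\frac{m}{m-1}} > 0.
\]
I expect the main obstacle to be the rigorous treatment of the boundedness dichotomy and the measure-theoretic subtlety that $Y'(\zeta) > \tau_1$ need only hold where we can invoke $Y(\zeta) > \Psi(\tau_1)$; since $Y$ is differentiable and nondecreasing this is clean, but one must be careful that the pointwise inequality $Y' \ge C^{-1/m}Y^{1/m}$ genuinely holds at every $\zeta \ge \zeta_1$ (it does, because at such $\zeta$, $Y(\zeta) > \Psi(\tau_1) \ge \Psi(Y'(\zeta))$ would fail if $Y'(\zeta) \le \tau_1$, contradiction, so $Y'(\zeta) > \tau_1$ and the power bound on $\Psi$ applies there).
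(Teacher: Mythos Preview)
Your argument is correct and follows the standard route; the paper itself does not give a proof of this lemma, referring instead to Lady\v{z}enskaya--Solonnikov and to \cite[Lemma~2.7]{LPY:2022ARXIV}, and the proof there proceeds exactly as you outline: obtain the pointwise inequality $Y'(\zeta)\ge C^{-1/m}Y(\zeta)^{1/m}$ for large $\zeta$ and integrate.

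One stylistic remark: the bounded/unbounded dichotomy you set up is more elaborate than needed. As soon as you have $\zeta_0$ with $Y(\zeta_0)=y_0>0$, the hypothesis $\Psi(Y'(\zeta))\ge Y(\zeta)\ge y_0$ for all $\zeta\ge\zeta_0$, together with $\Psi(0)=0$ and monotonicity, already gives a uniform lower bound $Y'(\zeta)\ge\delta_0>0$ on $[\zeta_0,\infty)$, hence $Y(\zeta)\to\infty$ directly---no case split is required. After that, your passage to $Y'(\zeta)>\tau_1$ and the integration of $Y' Y^{-1/m}\ge C^{-1/m}$ are exactly right.
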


\qed

The following two lemmas are essential in creating the pressure field for a weak solution to the Navier-Stokes equations. The first one is a special case of \cite[Theorem 17]{DeRham1960} by De Rham. See also \cite[Proposition 1.1]{Temam1984}.
\begin{lemma}\label{DeRham}
For a given open set $\O\subset\mathbb{R}^2$, let $\bl{\mathcal{F}}$ be a distribution in $\left(C_c^\i(\O)\right)'$ which satisfies:
\[
\langle \bl{\mathcal{F}},\bl{\phi}\rangle=0,\q\text{for all}\q \bl{\phi}\in\{\bl{g}\in C_c^\i(\O;\mathbb{R}^2):\,\text{div }\bl{g}=0\}.
\]
Then there exists a distribution $q\in\left(C_c^\i(\O;\mathbb{R})\right)'$ such that
\[
\bl{\mathcal{F}}=\na q.
\]
\end{lemma}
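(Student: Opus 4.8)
The plan is to let $q$ act, on the divergence of a compactly supported vector field, exactly the way $\bl{\mathcal{F}}$ does, and then to verify this recipe is legitimate. Since a distribution on $\O$ is determined by its action on test functions supported in each connected component, and both the hypothesis on $\bl{\mathcal{F}}$ and the desired conclusion respect this decomposition, I first reduce to the case that $\O$ is connected (the component-wise functionals reassemble automatically into a single element of $(C_c^\i(\O;\mathbb{R}))'$). The one analytic ingredient needed is a Bogovskii-type right inverse of the divergence: a linear map $\mathcal{B}$ sending each $f\in C_c^\i(\O)$ with $\int_\O f\,dx=0$ to a field $\mathcal{B}f\in C_c^\i(\O;\mathbb{R}^2)$ with $\na\cdot(\mathcal{B}f)=f$, continuous in the sense that $\mathcal{B}$ carries functions supported in a fixed compact subset of $\O$ boundedly (in each $C^k$ seminorm) to functions supported in a fixed compact subset of $\O$. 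On a general, possibly unbounded and non-smooth, connected open set this is produced from an exhaustion by bounded Lipschitz subdomains together with a cut-off-and-correct procedure that keeps supports compact; this is precisely the input supplied by \cite{DeRham1960} or \cite[Proposition 1.1]{Temam1984}, and it is the main — indeed the only nontrivial — obstacle in the argument; everything after it is formal.

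Granting $\mathcal{B}$, fix $\phi_0\in C_c^\i(\O)$ with $\int_\O\phi_0\,dx=1$ and define, for arbitrary $f\in C_c^\i(\O)$,
\[
\langle q,\, f\rangle:=-\Big\langle\, \bl{\mathcal{F}},\ \mathcal{B}\big(f-(\textstyle\int_\O f\,dx)\,\phi_0\big)\,\Big\rangle.
\]
This is a well-defined linear functional once $\mathcal{B}$ and $\phi_0$ are chosen, and it is continuous on $C_c^\i(\O)$ — hence a distribution — because $f\mapsto f-(\int_\O f\,dx)\phi_0$ and $\mathcal{B}$ are continuous in the test-function topology and $\bl{\mathcal{F}}$ is a distribution. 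The key observation, used twice, is that the hypothesis on $\bl{\mathcal{F}}$ forces $\langle\bl{\mathcal{F}},\bl{g}\rangle$ to depend on $\bl{g}\in C_c^\i(\O;\mathbb{R}^2)$ only through $\na\cdot\bl{g}$: if $\na\cdot\bl{g}_1=\na\cdot\bl{g}_2$ then $\bl{g}_1-\bl{g}_2$ is a divergence-free test field, so $\langle\bl{\mathcal{F}},\bl{g}_1-\bl{g}_2\rangle=0$. (This also shows $q$ is independent of the choice of $\mathcal{B}$ and changes only by an additive constant if $\phi_0$ is changed, consistent with $\na q$ being canonical.)

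It remains to check $\na q=\bl{\mathcal{F}}$. Let $\bl{g}\in C_c^\i(\O;\mathbb{R}^2)$; then $\int_\O\na\cdot\bl{g}\,dx=0$, so $\mathcal{B}(\na\cdot\bl{g})$ is a compactly supported field with the same divergence as $\bl{g}$, and the key observation gives $\langle\bl{\mathcal{F}},\mathcal{B}(\na\cdot\bl{g})\rangle=\langle\bl{\mathcal{F}},\bl{g}\rangle$. Therefore
\[
\langle\na q,\bl{g}\rangle=-\langle q,\na\cdot\bl{g}\rangle=\big\langle\bl{\mathcal{F}},\,\mathcal{B}(\na\cdot\bl{g})\big\rangle=\langle\bl{\mathcal{F}},\bl{g}\rangle,
\]
which is exactly $\bl{\mathcal{F}}=\na q$. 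A more abstract variant bypasses the explicit formula: it is enough to know that $\na\cdot:C_c^\i(\O;\mathbb{R}^2)\to C_c^\i(\O)$ has closed range (equal to the functions of zero mean on each component), since then the annihilator of its kernel — the space of divergence-free test fields — is the weak-$*$ closed range of its transpose $-\na$; but proving the closed-range property once more reduces to the same Bogovskii-type estimate, so the difficulty is unchanged.
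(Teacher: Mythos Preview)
The paper does not supply its own proof of this lemma; it merely records it as a special case of \cite[Theorem 17]{DeRham1960} and refers the reader to \cite[Proposition 1.1]{Temam1984}. Your proposal goes further and actually sketches a proof, via a Bogovskii right inverse of the divergence, and the argument is correct: once a continuous linear map $\mathcal{B}$ from mean-zero test functions to compactly supported test fields with prescribed divergence is in hand, the definition of $q$ and the verification $\nabla q=\bl{\mathcal{F}}$ proceed exactly as you wrote, and the observation that $\langle\bl{\mathcal{F}},\bl{g}\rangle$ depends only on $\nabla\cdot\bl{g}$ is the crux.

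One minor misattribution: you cite \cite{DeRham1960} and \cite[Proposition 1.1]{Temam1984} as supplying the Bogovskii ingredient, but in this paper those are precisely the references given for the De Rham lemma itself, not for a right inverse of the divergence. The Bogovskii construction (on bounded Lipschitz domains, extended to general open sets by exhaustion and patching) is what the paper records separately in Lemma~\ref{LEM2.1}, with references \cite{Bme1,Bme2} and \cite[Chapter III]{Galdi2011}. Your proof is not circular, but the citation for $\mathcal{B}$ should point there instead; with that correction the proposal stands as a self-contained argument where the paper offers none.
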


\qed

The second one states the regularity of the aforementioned field $q$:

\begin{lemma}[See \cite{Temam1984}, Proposition 1.2]\label{LEM312}
Let $\Omega$ be a bounded Lipschitz open set in $\mathbb{R}^{2}$. If a distribution $q$ has all its first derivatives $\p_{x_i} q$, $1\leq i \leq2$, in $H^{-1}(\Omega)$, then $q\in L^{2}(\Omega)$ and
\be\label{EEEE0}
\left\|q-\bar{q}_\O\right\|_{L^{2}(\Omega)}\leq C_\O\|\na q\|_{H^{-1}(\Omega)},
\ee
where $\bar{q}_\O=\f{1}{|\O|}\int_{\O}qdx$. Moreover, if $\Omega$ is any Lipschitz open set in $\mathbb{R}^{2}$, then $q\in L_{\mathrm{loc}}^{2}(\overline{\Omega})$.
\end{lemma}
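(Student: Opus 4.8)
This is a classical fact (see \cite[Proposition 1.2]{Temam1984}, also \cite[Sec.~III.3]{Galdi2011}); I sketch the standard argument. The heart of the matter is the \emph{quantitative} De Rham / Ne\v{c}as lemma on a \emph{bounded} Lipschitz open set $\O\subset\bR^2$: the subspace $G(\O):=\{\na p:\,p\in L^2(\O)\}$ is closed in $H^{-1}(\O)^2$, it coincides with the annihilator of $V:=\{\bl{v}\in H^1_0(\O)^2:\,\na\cdot\bl{v}=0\}$, and
\be\label{nec-plan}
\|p-\bar{p}_\O\|_{L^2(\O)}\leq C_\O\|\na p\|_{H^{-1}(\O)},\q\forall\,p\in L^2(\O).
\ee
By the closed range theorem this is equivalent to the surjectivity of $\na\cdot:\,H^1_0(\O)^2\to\{g\in L^2(\O):\,\int_\O g\,dx=0\}$ together with a bounded right inverse (the Bogovski\u{\i} operator). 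I would take \eqref{nec-plan} as the one external ingredient and derive everything from it.

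Granting this, the first assertion is short. Let $\O$ be bounded Lipschitz and $q\in\mathcal{D}'(\O)$ with $\p_{x_i}q\in H^{-1}(\O)$ for $i=1,2$. For every $\bl{v}\in C_c^\infty(\O;\bR^2)$ with $\na\cdot\bl{v}=0$ the definition of the distributional gradient gives $\langle\na q,\bl{v}\rangle=-\langle q,\na\cdot\bl{v}\rangle=0$; since $\na q\in H^{-1}(\O)^2$ and such $\bl{v}$ are dense in $V$, it follows that $\na q$ annihilates $V$, hence $\na q=\na p$ in $H^{-1}(\O)^2$ for some $p\in L^2(\O)$. Then $\na(q-p)=0$ in $\mathcal{D}'(\O)$, so $q-p$ is constant on each component and $q\in L^2(\O)$; moreover $q-\bar{q}_\O=p-\bar{p}_\O$, so \eqref{EEEE0} is exactly \eqref{nec-plan}. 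For the last assertion, let $D$ be relatively compact in $\overline{\O}$ with $\O$ an arbitrary Lipschitz open set, and cover $D$ by finitely many bounded Lipschitz open sets $\O_j\subset\O$ (balls around interior points of $D$, small Lipschitz--chart neighbourhoods around boundary points). Since $H^1_0(\O_j)\hookrightarrow H^1_0(\O)$ by extension by zero, restriction maps $H^{-1}(\O)$ boundedly into $H^{-1}(\O_j)$ and $\na(q|_{\O_j})=(\na q)|_{\O_j}\in H^{-1}(\O_j)^2$, so the bounded case applies on each $\O_j$ and yields $q\in L^2(\O_j)$, whence $q\in L^2(D)$.

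The substantive step is \eqref{nec-plan} (equivalently the Bogovski\u{\i} estimate), and that is where the real work sits. The usual route is to first establish the estimate on the half-space $\bR^2_+$ via even/odd reflection and the Plancherel identity $\|u\|^2_{L^2(\bR^2)}=\|u\|^2_{H^{-1}(\bR^2)}+\sum_{i}\|\p_{x_i}u\|^2_{H^{-1}(\bR^2)}$, then to flatten $\p\O$ with Lipschitz charts, patch with a partition of unity $\{\theta_j\}$, and absorb the lower-order commutators $q\,\na\theta_j$; alternatively one uses the explicit Bogovski\u{\i} integral operator on star-shaped pieces and glues. It is worth stressing why the qualitative part cannot be shortcut by the naive duality computation: given $\phi\in C_c^\infty(\O)$ one would like to write $\phi=\na\cdot\bl{v}+c_\phi\chi$ with $c_\phi=\int_\O\phi\,dx$, a fixed $\chi\in C_c^\infty(\O)$ with $\int_\O\chi\,dx=1$, and $\bl{v}\in H^1_0(\O)^2$, and then use $\langle q,\na\cdot\bl{v}\rangle=-\langle\na q,\bl{v}\rangle$; but approximating $\bl{v}$ by $\bl{v}_k\in C_c^\infty(\O;\bR^2)$ in $H^1_0(\O)^2$ only gives $\na\cdot\bl{v}_k\to\phi-c_\phi\chi$ in $L^2(\O)$, not in the topology of $\mathcal{D}(\O)$, so $\langle q,\na\cdot\bl{v}_k\rangle$ need not converge to $\langle q,\phi-c_\phi\chi\rangle$. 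This is precisely the obstruction that forces one into the closed-range formulation \eqref{nec-plan} rather than a direct pairing argument.
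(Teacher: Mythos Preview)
The paper does not prove this lemma at all; it is stated as a quotation of \cite[Proposition 1.2]{Temam1984} and closed with a \qed. Your sketch is the standard Ne\v{c}as/Bogovski\u{\i} argument underlying that reference, so there is nothing to compare --- your proposal simply fills in what the paper deliberately outsources.
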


\qed

Finally, we state the following lemma, which shows the existence of the solution to problem $\nabla\cdot \bl{V}=f$ in a truncated regular stripe.
\begin{lemma}\label{LEM2.1}
For a boxed domain $S:= [a,b]\times [c,d]$,  if $f\in L^2(S)$ with $\int_S fdx=0$, then there exists a vector valued function $\bl{V}:\,S\to\mathbb{R}^2$ belongs to $H^1_0(S)$ such that
\be\label{LEM2.11}
\nabla\cdot \bl{V}=f,\q\text{and}\q\|\nabla \bl{V}\|_{L^2(S)}\leq C\|f\|_{L^2(S)}.
\ee
Here $C>0$ is an absolute constant.
\end{lemma}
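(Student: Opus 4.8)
The plan is to recognize this as the standard solvability of the divergence equation $\nabla\cdot\bl{V}=f$ with homogeneous Dirichlet boundary data on a bounded Lipschitz domain, and to exploit the invariances of the estimate to reduce to a single reference box. Both the equation and the scale‑free quantity $\|\nabla\bl{V}\|_{L^2(S)}/\|f\|_{L^2(S)}$ are unchanged under translations of $S$, and under a dilation $S\mapsto\lambda S$ as well: if $\bl{V}$ solves the problem on $S$ with datum $f$, then $\bl{V}(\cdot/\lambda)$ solves it on $\lambda S$ with datum $\lambda^{-1}f(\cdot/\lambda)$, and a change of variables gives $\|\nabla(\bl{V}(\cdot/\lambda))\|_{L^2(\lambda S)}=\|\nabla\bl{V}\|_{L^2(S)}$ and $\|\lambda^{-1}f(\cdot/\lambda)\|_{L^2(\lambda S)}=\|f\|_{L^2(S)}$. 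Hence, after translating so that $a=c=0$, rescaling so that $\min(b-a,d-c)=1$, and relabelling the axes, it suffices to solve the problem on boxes $(0,\ell)\times(0,1)$ with $\ell\ge 1$, and the resulting constant depends only on $\ell$, i.e. only on the aspect ratio of $S$.

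On such a box I would invoke Bogovskii's explicit right inverse of the divergence on a domain star‑shaped with respect to an interior ball. A box is convex, hence star‑shaped with respect to its inscribed disk, whose radius is $\tfrac12$; Bogovskii's operator $\bl{V}=\mathcal{B}f$, built from a singular kernel assembled out of one fixed smooth bump supported in that disk, then produces $\bl{V}\in W^{1,2}_0(S)=H^1_0(S)$ with $\nabla\cdot\bl{V}=f$ (here the compatibility $\int_S f=0$ is exactly what makes the divergence equal $f$ and not $f$ plus a constant), together with $\|\nabla\bl{V}\|_{L^2(S)}\le C\|f\|_{L^2(S)}$, where $C$ is governed by the ratio of $\operatorname{diam}(S)$ to the inradius; see \cite[Chapter III]{Galdi2011}. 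Combined with the reduction above this proves the lemma, with $C$ depending only on the aspect ratio of the box — in particular, for the unit‑width truncated pieces $\Upsilon^{\pm}_\zeta$ and for the fixed part $\mS_0$ appearing later, this ratio is a fixed number, so $C$ can be taken independent of the truncation parameter $\zeta$, which is the sense in which it is ``absolute''.

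The step I expect to be the real obstacle is precisely this control of the constant by the shape alone. One cannot hope for a constant independent of the aspect ratio: for $(0,\ell)\times(0,1)$ the datum $f=\varepsilon$ on the left half and $f=-\varepsilon$ on the right half forces the horizontal flux $\int_0^1 V_1(x_1,x_2)\,dx_2$ to grow like $\varepsilon\min(x_1,\ell-x_1)$, and combined with the Poincar\'e inequality in $x_2$ (using $V_1|_{x_2=0,1}=0$) this yields $\|\nabla\bl{V}\|_{L^2}^2\gtrsim\varepsilon^2\ell^3\gg\|f\|_{L^2}^2=\varepsilon^2\ell$. So the Bogovskii route above, read on a reference box of fixed proportions, is the clean way to get the estimate, and it suffices here because every box to which the lemma is applied has uniformly bounded aspect ratio. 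If one insisted on tracking a long box directly, one would decompose $(0,\ell)\times(0,1)$ into $\lceil\ell\rceil$ essentially‑unit squares $Q_k$, solve on each the localized mean‑zero problem $f\chi_{Q_k}-m_k\phi_k$ (with $m_k=\int_{Q_k}f$ and $\phi_k$ a fixed bump, summing via disjoint supports and $\sum_k m_k^2\le\|f\|_{L^2}^2$), and then resolve the residual transport term $\sum_k m_k\phi_k$ — this last piece being exactly where the aspect‑ratio dependence concentrates.
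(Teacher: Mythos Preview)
Your proposal is correct and follows exactly the route the paper takes: the paper does not give an argument but simply refers to Bogovski\u{\i} \cite{Bme1,Bme2} and \cite[Chapter III]{Galdi2011}, which is precisely the operator you invoke after your translation/dilation reduction. Your observation that the constant necessarily depends on the aspect ratio (with the explicit counterexample on $(0,\ell)\times(0,1)$) is a welcome sharpening of the paper's loose phrase ``absolute constant''; it causes no trouble here because the lemma is only ever applied to the unit-length slabs $\Upsilon_K^{\pm}$, whose proportions are fixed independently of $K$.
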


See \cite{Bme1, Bme2}, also \cite[Chapter III]{Galdi2011} for detailed proof of this lemma.

\qed

\section{Existence and uniqueness of the weak solution}\label{SEU}

\subsection{Construction of the flux carrier}\label{SEC32}

\q\ In this subsection, we are devoted to the construction of a flux carrier $\bl{a}$, which is divergence free, satisfying the Navier-slip boundary condition \eqref{NBC}, and connects two Poiseuille flows in $\mS_L$ and $\mS_R$ smoothly. Meanwhile, the vector $\bl{a}$ will satisfy the following:

\begin{proposition}\label{Prop}
There exists a smooth vector field $\bl{a}(x)$ which enjoys the following properties
\begin{itemize}
\setlength{\itemsep}{-2 pt}
\item[(i).] $\bl{a}\in C^\i(\overline{\mS})$, and $\na\cdot \bl{a}=0$ in $\mS$;

\item[(ii).] $2(\mathbb{S}\bl{a}\cdot\bl{n})_{\mathrm{tan}}+\al \bl{a}_{\mathrm{tan}}=0$, and $\bl{a}\cdot\bl{n}=0$ on $\p\mS$;

\item[(iii).] For a fixed $\ve\in (0,1)$ ,
\be\label{EADF}
\bl{a}=\left\{
\begin{aligned}
&\bl{P}^L_{\Phi}(y)\q\text{ in }\q\mS\cap \{y_1\leq -e^{\f{2}{\ve}}\},\\
&\bl{P}^R_{\Phi}(x)\q\text{ in }\q\mS\cap\{x_1\geq e^{\f{2}{\ve}}\}.
\end{aligned}\right.
\ee
Moreover, for any vector filed $\bl{v}\in \mathcal{V}$ with
\be\label{FUNCVV}
\mathcal{V}:=\left\{\bl{v}\in H^1(\mS):\,\operatorname{div}\bl{v}=0,\ (\bl{v}\cdot\bl{n})\big|_{\p\mS}=0\right\},
\ee
there exists a constant $C$, independent of $\ve$ and $\al$, such that
\be
\lt|\int_{\mS} \bl{v}\cdot \na\bl{a}\cdot \bl{v} dx\rt|\leq C_\mS\Phi\lt(\ve+\f{\al}{1+\al}\rt)\|\na \bl{v}\|^2_{L^2(\mS)}. \label{estisqu}
\ee
\end{itemize}
\end{proposition}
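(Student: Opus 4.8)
The plan is to build $\bl{a}$ piece by piece along the strip and then patch together, using the curvilinear coordinates $(s,t)$ near $\p\mS_-$ (and the mirror construction near $\p\mS_+$ and $\p\mS_{Ob}$) so that the divergence-free and Navier-slip conditions survive the gluing. In the two semi-infinite straight parts $\mS_L\cap\{y_1\le -e^{2/\ve}\}$ and $\mS_R\cap\{x_1\ge e^{2/\ve}\}$ I simply set $\bl{a}=\bl{P}^L_\Phi$ and $\bl{a}=\bl{P}^R_\Phi$ respectively; these are divergence-free, satisfy \eqref{NBC} by the defining ODE systems \eqref{POSS1}--\eqref{POSS2}, and carry the flux $\Phi$. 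The issue is the transition region. First I would write each Poiseuille profile as the $\bl{e_s}$-component of a stream-function-type field: on $\mS_-(\dl)$ a divergence-free field tangent to $\p\mS_-$ has the form $\bl{a}=\p_t\psi\,\bl{e_s}$ (up to the factor $\gamma$) for a scalar $\psi(s,t)$, using \eqref{DIV-CURL}$_1$; the Navier-slip condition \eqref{NBCM} then becomes a condition on $\psi$ at $t=0$. Near the two ends $\psi$ is forced (it must reproduce $\bl{P}^L_\Phi,\bl{P}^R_\Phi$), and in the middle I am free to choose it — in particular I can take $\psi$ supported in a thin layer $\mS_-(\ve)$ and on the matching straight parts take the standard Leray-type construction $\bl{a}=\big(\sigma_\ve(t)\Phi\big)'\bl{e_s}$ where $\sigma_\ve$ is a cutoff at distance $\ve$ from the wall carrying all the flux; this is the field written $\sigma'_\ve(t)\bl{e_s}$ in the outline. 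The key structural point, which must be checked carefully, is that because the coordinate change is $\al$-independent and smooth with Jacobian bounded below (Lemma \ref{LEM1.2}), the glued $\bl{a}$ is $C^\infty(\overline{\mS})$, divergence-free, and satisfies \eqref{NBC} exactly — items (i)--(iii) of the conclusion.

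For the crucial estimate \eqref{estisqu}, I would split $\mS$ into the straight parts $\mS_L\cap\{y_1\le-e^{2/\ve}\}$, $\mS_R\cap\{x_1\ge e^{2/\ve}\}$ (call these the far field), and the bounded remainder. In the far field $\bl{a}=\bl{P}^i_\Phi$ depends only on the transverse variable, so $\bl{v}\cdot\na\bl{a}\cdot\bl{v}=v_1 v_2\,(P^i_\Phi)'$ (in the relevant coordinates); using $|(P^i_\Phi)'|\le C\al\Phi/(1+\al)$ from \eqref{EHP} and Cauchy--Schwarz this is bounded by $C\frac{\al\Phi}{1+\al}\|\na\bl{v}\|^2_{L^2}$ after the Poincaré inequality of Corollary \ref{CorPoin} (here it is essential that $\bl{v}\in\mathcal V$ so Poincaré applies with $\al$-independent constant). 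In the bounded remainder I would use that $\bl{a}$ is supported within the $\ve$-layer near $\p\mS$ (the flux carrier is $O(\ve)$-thin by construction), so $\bl{v}$ vanishes on $\p\mS$ in the normal direction and a one-dimensional Poincaré/Hardy estimate in the $t$-variable gives $\|\bl{v}\|_{L^2(\mS_-(\ve))}\le C\ve\|\p_t\bl{v}\|_{L^2(\mS_-(\ve))}\le C\ve\|\na\bl{v}\|_{L^2}$; pairing this with the bound $\|\na\bl{a}\|_{L^\infty}\le C\Phi/\ve$ (the stream function varies by $\sim\Phi$ over a layer of width $\ve$) yields $|\int v\cdot\na\bl{a}\cdot v|\le C\Phi\ve\cdot\ve^{-2}\cdot\ve^2\|\na\bl{v}\|_{L^2}^2=C\Phi\ve\|\na\bl{v}\|_{L^2}^2$. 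Adding the two contributions gives \eqref{estisqu} with the stated factor $\ve+\al/(1+\al)$, and all constants depend only on $\mS$.

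The main obstacle I anticipate is the interface gluing: one must simultaneously (a) keep $\na\cdot\bl{a}=0$ across the seams between the straight-strip construction and the curvilinear construction, which is why working with the scalar potential $\psi$ rather than $\bl{a}$ directly is the right move — gluing potentials preserves the divergence-free property automatically; (b) keep the Navier-slip condition \eqref{NBCM} valid on the whole of $\p\mS_0$, not merely on the straight ends — this is where the uniform curvilinear coordinates pay off, since \eqref{NBC} takes the nearly identical form \eqref{NBCM} there, so the boundary condition is one scalar ODE constraint on $\psi(s,0)$ that can be met while $\psi$ is free in the interior; and (c) arrange the cutoff scales so that the support of the nontrivial part of $\bl{a}$ in the transition zone still lies in the $\ve$-layer, so that the Hardy-type estimate above applies uniformly. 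The length scale $e^{2/\ve}$ in \eqref{EADF} is exactly what makes the logarithmic cutoff $\sigma_\ve$ have gradient $O(\ve)$ in the $s$-direction, which is what produces the $\ve$ (rather than $O(1)$) contribution from the transition region; verifying this bookkeeping is the technical heart of the argument.
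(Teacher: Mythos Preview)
Your overall architecture matches the paper: set $\bl{a}=\sigma'_\ve(t)\bl{e_s}$ in a thin layer near $\p\mS_-$, blend into $\bl{P}^L_\Phi,\bl{P}^R_\Phi$ over the long interval $|x_1|\le e^{2/\ve}$ via a longitudinal cutoff $\eta$, and split \eqref{estisqu} into a far-field contribution (controlled by $|(P^i_\Phi)'|\le C\al\Phi/(1+\al)$ from \eqref{EHP}) and a near-field contribution. Your reading of the scale $e^{2/\ve}$ as the device that makes $|\eta'|$ beat $\|\sigma'_\ve\|_{L^\infty}$ is also correct. (One minor over-complication: the paper carries all the flux along $\p\mS_-$ only; no separate construction near $\p\mS_+$ or $\p\mS_{Ob}$ is needed.)

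The genuine gap is in your near-field estimate. The Poincar\'e step $\|\bl{v}\|_{L^2(\mS_-(\ve))}\le C\ve\|\p_t\bl{v}\|_{L^2}$ is \emph{false} for the tangential component $v_s$: under Navier-slip only $v_t|_{t=0}=0$, while $v_s$ need not be small at the wall, so you get no factor of $\ve$ from $v_s$. Moreover the dominant entry of $\na\bl{a}$ is $\p_t a_s=\sigma''_\ve(t)$, whose $L^\infty$ norm is of order $\Phi/\ve^2$ (for an ordinary cutoff) and much larger for the logarithmic one, not $\Phi/\ve$; with the correct scalings your bookkeeping gives at best $(\Phi/\ve^2)\cdot\|v_s\|_{L^2}\cdot\ve\|\na\bl{v}\|_{L^2}$, which does not close. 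The paper's mechanism is more delicate: it takes the \emph{logarithmic} cutoff so that $|\sigma'_\ve(t)|\le 2\Phi\ve/t$ pointwise, integrates by parts in $t$ to trade $\sigma''_\ve v_sv_t$ for terms involving $\sigma'_\ve$, bounds the piece $\int\sigma'_\ve\,\p_t(v_s/\gamma)\,v_t$ via the Hardy inequality applied \emph{only} to $v_t/t$ (legitimate since $v_t|_{t=0}=0$), and uses $\na\cdot\bl{v}=0$ to convert the remaining piece $\int\sigma'_\ve v_s\p_sv_s$ into boundary terms at $s=\pm s_0$ that cancel exactly against matching boundary terms arising in the $\mS_R$ and $\mS_L$ computations. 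Without the Hardy step on $v_t$ alone and this cross-region cancellation, the factor $\ve$ does not emerge.
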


\qed

The following lemma is useful in the construction of $\bl{a}$.

\begin{lemma}
There exists a smooth non-decreasing function $\sigma_\ve: [0,\dl)\to[-\Phi,0]$, where $0<\ve<<1$, such that
\[
\sigma_\ve(t)=\left\{
\begin{aligned}
0,&\q\text{for}\q t\geq \ve;\\
-\Phi,&\q\text{for}\q 0\leq t\leq \e.
\end{aligned}
\right.
\]
Here $\e:=\ve e^{-1/\ve}/3$, and
\[
\dl:=\mathrm{dist}(\p\mS_-,\p\mS_{ob}\cup\p\mS_+)=\inf\left\{|x-z|:\,x\in\p\mS_-,\,z\in\p\mS_{ob}\cup\p\mS_+\right\}>2\ve.
\]
Meanwhile, when $t\in\lt[0,\ve\rt]$, there exists constant $C$ such that
\be\label{VEEST}
\left\{\begin{aligned}
&0\leq\sigma_\ve'(t)\leq \min\left\{\Phi e^{1/\ve},\f{2\Phi\ve}{t}\right\},\\[1mm]
&\left|\sigma_\ve^{(k)}(t)\right|\leq C\Phi e^{1/\ve}(\ve^{-1}e^{1/\ve})^{k-1},\q\text{for }k=2,3.\\
\end{aligned}\right.
\ee
\end{lemma}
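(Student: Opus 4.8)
The plan is to build $\sigma_\ve$ by composing a fixed smooth one-dimensional transition profile with the \emph{rescaled logarithmic variable} $w(t):=\log(t/\e)/\ell$, where $\ell:=\log(\ve/\e)=\f{1}{\ve}+\log 3$ is the length, measured in the $\log t$ scale, of the transition interval $[\e,\ve]$; this is the Navier-slip analogue of the logarithmic cut-off used in the classical no-slip Leray problem. The point is that $\ell$ is large, of order $\ve^{-1}$, so the factor $1/\ell$ produced by one differentiation is exactly what gives the gain $\sigma_\ve'(t)\ls\ve/t$; simultaneously, the explicit value $\e=\ve e^{-1/\ve}/3$ pushes the whole transition band away from $t=0$ by a distance $\gg\ve e^{-1/\ve}$, which is what keeps the rival factor $1/t$ bounded there by $\ls e^{1/\ve}$. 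Thus the envelope $\min\{\Phi e^{1/\ve},\,2\Phi\ve/t\}$ in \eqref{VEEST} is exactly what one reads off these two features.

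Concretely, I would first fix once and for all a non-decreasing $\phi\in C^\i(\mR)$ with $0\le\phi\le1$, $\phi\equiv0$ on $(-\i,1/4]$, $\phi\equiv1$ on $[1,\i)$, $\|\phi'\|_\i\le2$, and $\|\phi''\|_\i+\|\phi'''\|_\i\le C_0$ for an absolute constant $C_0$ (obtained by integrating a suitably normalized smooth bump supported in $(1/4,1)$). Then, for every $\ve$ small enough that $2\ve<\dl$, I would set
\be\label{sigmadef}
\sigma_\ve(t):=-\Phi\lt(1-\phi\lt(\f{\log(t/\e)}{\ell}\rt)\rt)\q\text{for}\q t\in(0,\dl),
\ee
extended by $\sigma_\ve(0):=-\Phi$. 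Since $\phi$ is constant near $1/4$ and near $1$, the right-hand side of \eqref{sigmadef} is $\equiv-\Phi$ for $t\le\e\,e^{\ell/4}$ (in particular on $[0,\e]$) and $\equiv0$ for $t\ge\ve$; hence $\sigma_\ve$ glues into a genuine $C^\i([0,\dl))$ function, non-decreasing since $\phi'\ge0$, with range inside $[-\Phi,0]$ since $0\le\phi\le1$. This gives all the qualitative assertions, the inequality $\dl>2\ve$ being just the standing smallness hypothesis on $\ve$.

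For \eqref{VEEST} I would use the support property to reduce to $t$ in the transition band $\e\,e^{\ell/4}\le t\le\ve$, where $\e\,e^{\ell/4}=3^{-3/4}\ve\,e^{-3/(4\ve)}\ge\ve e^{-1/\ve}$ for $\ve$ small. Differentiating $w(t)=\log(t/\e)/\ell$ gives $w^{(j)}(t)=(-1)^{j-1}(j-1)!\,\ell^{-1}t^{-j}$, hence $\sigma_\ve'(t)=\Phi\,\ell^{-1}t^{-1}\phi'(w(t))$ and, by the chain rule, $\sigma_\ve^{(k)}(t)$ is a sum of boundedly many terms $\Phi\,\phi^{(j)}(w(t))\prod_i w^{(m_i)}(t)$ with $1\le j\le k$ and $\sum_i m_i=k$, each of size $\ls\Phi\,\ell^{-1}t^{-k}$ (using $\ell\ge1$ and $\|\phi^{(j)}\|_\i\le C_0$). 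Consequently: $\sigma_\ve'\ge0$; $\sigma_\ve'(t)\le2\Phi\ell^{-1}t^{-1}\le2\Phi\ve/t$ since $\ell^{-1}<\ve$; on the band also $\sigma_\ve'(t)\le2\Phi\ell^{-1}t^{-1}\le2\cdot3^{3/4}\,\Phi\,e^{-1/(4\ve)}e^{1/\ve}\le\Phi\,e^{1/\ve}$ for $\ve$ small; and, with the cruder band bound $t\ge\ve e^{-1/\ve}$, $|\sigma_\ve^{(k)}(t)|\ls\Phi\,\ell^{-1}t^{-k}\ls\Phi\,\ve\,(\ve e^{-1/\ve})^{-k}=\Phi\,e^{1/\ve}(\ve^{-1}e^{1/\ve})^{k-1}$ for $k=2,3$. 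These are the estimates in \eqref{VEEST}.

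The construction and the estimates are all elementary; the one genuinely delicate---though still mild---point is the calibration of the two competing scales: taking $\ell$ as large as $\ve^{-1}$ so that $1/\ell$ delivers the $\ve/t$ bound, while at the same time keeping the transition band a distance $\gg\ve e^{-1/\ve}$ from $t=0$ so that $1/t$ stays $\ls e^{1/\ve}$ there. The value $\e=\ve e^{-1/\ve}/3$ is precisely the choice reconciling the two, leaving slack of size $e^{c/\ve}$ (some $c>0$) to absorb all numerical constants; once the scales are fixed, the $k=2,3$ bounds are routine repeated chain-rule bookkeeping.
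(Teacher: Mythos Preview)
Your construction is correct and takes a genuinely different route from the paper. The paper builds $\sigma_\ve'$ by mollifying the explicit piecewise function $\tau_\ve(t)=\ve/t$ (supported on $[\ve e^{-1/\ve},\ve]$, with unit integral) by a mollifier of radius $\e$, and then antidifferentiates with a normalizing constant $\tilde C\approx1$; the bounds in \eqref{VEEST} are then read off from $\|\tau_\ve\|_\infty\le e^{1/\ve}$, $\tau_\ve(t)\le\ve/t$, and Young's inequality for the mollified derivatives. You instead compose a fixed profile $\phi$ with the rescaled logarithm $w(t)=\log(t/\e)/\ell$, so that all derivatives follow from the chain rule and no mollification or normalization is needed. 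Both approaches encode the same idea---the transition is linear in $\log t$ over an interval of length $\sim1/\ve$, forcing $\sigma_\ve'\sim\Phi\ve/t$---but yours is arguably cleaner bookkeeping, while the paper's makes the target shape $\sigma_\ve'\approx\Phi\ve/t$ explicit from the outset. One minor remark: like the paper, your sharp constants (the $1$ in $\Phi e^{1/\ve}$ and the $2$ in $2\Phi\ve/t$) rely on $\ve$ being small enough to absorb factors such as $2\cdot3^{3/4}e^{-1/(4\ve)}$; this is consistent with the standing hypothesis $0<\ve\ll1$.
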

\begin{proof}
We start with the piecewise smooth function
\be\label{tauve}
\tau_\ve(t)=\left\{
\begin{aligned}
0,&\q\text{for}\q t\geq\ve;\\
\f{\ve}{t},&\q\text{for}\q \ve e^{-1/\ve}<t<\ve;\\
0,&\q\text{for}\q 0\leq t\leq \ve e^{-1/\ve}.
\end{aligned}
\right.
\ee
Then denoting $\varsigma$ the classical mollifier with radius equals $\e$, the function $\sigma_\ve$ is given by:
\be\label{PRESIG}
\sigma_\ve(t):=\left\{
\begin{array}{ll}
-\Phi,&\q\text{for}\q 0\leq t<\e;\\[2mm]
-\Phi+\tilde{C}\Phi\int_0^t(\varsigma*\tau_\ve)(s)ds,&\q\text{for}\q \e<t<\dl-\e;\\[2mm]
0,&\q\text{for}\q \dl-\e\leq t<\dl.\\
\end{array}\right.
\ee
Here $\tilde{C}>0$ is chosen such that
\[
\tilde{C}\int_0^{\dl-\e}(\varsigma*\tau_\ve)(s)ds=1.
\]
Noting that $\tilde{C}$ must be sufficiently close to $1$, since $\int_0^\dl\tau_\ve(s)ds=1$ by \eqref{tauve}. Finally, \eqref{VEEST}$_1$ follows directly from \eqref{tauve} and \eqref{PRESIG}, while the validity of \eqref{VEEST}$_2$ follows that
\[
\left\|\sigma_\ve^{(k)}\right\|_{L^\i}=\tilde{C}\Phi\left\|\varsigma^{(k-1)}*\tau_\ve\right\|_{L^\i}\leq C\Phi\lt\|\varsigma^{(k-1)}\rt\|_{L^1}\left\|\tau_\ve\right\|_{L^\i}\leq C\Phi e^{1/\ve}(\ve^{-1}e^{1/\ve})^{k-1},\q\text{for }k=2,3.
\]
\end{proof}

{\noindent\bf Proof of Proposition \ref{Prop} : }Given $\ve<\f{\dl}{2}$, we define
\be\label{Cons1}
\bl{a}=\sigma'_\ve(t)\bl{e_s},\q\text{in}\q\mS_{0},
\ee
where $\bl{e_s}$ is defined around \eqref{ESET}, while
\be\label{Cons}
\bl{a}=\left\{
\begin{array}{l} \left[\sigma'_\ve(x_2)(1-\eta(x_1))+\eta(x_1)P^R_\Phi(x_2)\right]\bl{e_1}-\left(\eta'(x_1)\int_0^{x_2}\left(P^R_\Phi(\xi)-\sigma'_\ve(\xi)\right)d\xi\right)\bl{e_2},\\
\hskip 13.5cm\text{in}\q\mS_R;\\[3mm] \left[\sigma'_\ve(y_2)(1-\eta(-y_1))+\eta(-y_1)P^L_\Phi(y_2)\right]\bl{e_1}'+\left(\eta'(-y_1)\int_{0}^{y_2}\left(P^L_\Phi(\xi)-\sigma'_\ve(\xi)\right)d\xi\right)\bl{e_2}',\\
\hskip 13.5cm\text{in}\q\mS_L.\\
\end{array}
\right.
\ee
Here $\eta=\eta(s)$ be the smooth cut-off functions such that
\be\label{ETA}
\eta(s)=\left\{
\begin{aligned}
&1,\q\text{for}\q s>e^{2/\ve};\\
&0,\q\text{for}\q s<0.\\
\end{aligned}
\right.
\ee
and $\eta$ satisfies
\[
|\eta'|\leq 2e^{-2/\ve}, \q\text{and}\q|\eta''|\leq 4e^{-4/\ve}.
\]
$P^L_\Phi$ and $P^R_\Phi$, which are given in \eqref{PF}, are $\bl{e_1}$-component and $\bl{e_1}'$-component of Poiseuille flows in pipes $\mS_L$ and $\mS_R$, respectively.

Using \eqref{ESET} and \eqref{DIV-CURL}$_1$, the flux carrier $\bl{a}$ constructed in \eqref{Cons} is smooth and divergence-free. Meanwhile, since $\sigma_\ve(t)=0$ near $t=0$, one has $\bl{a}$ vanishes near $\p\mS\cap\p\mS_0$. This indicates $\bl{a}$ satisfies the homogeneous Navier-slip boundary condition on $\p\mS\cap\p\mS_0$.

Now we go to verify that $\bl{a}$ meets the Navier-slip boundary condition on $\p\mS\cap(\mS_L\cup\mS_R)$. Owing to cases in $\eqref{Cons}_{1,2}$ are similar, we only consider $\eqref{Cons}_{1}$ for simplicity. Since in this part, $\p\mS_R$ is straight, direct calculation of the Navier-slip boundary condition is to check
\[
\left\{
\begin{aligned}
&-\p_{x_2}a_1(x_1,0)+\al a_1(x_1,0)=\p_{x_2}a_1(x_1,1)+\al a_1(x_1,1)=0;\\[2mm]
&a_2(x_1,0)=a_2(x_1,1)=0,\\
\end{aligned}
\right.\q\q\forall x_1\in(0,\i).
\]
This could be done by the definition of $P^R_\Phi$ in \eqref{PF}, the construction of $\sigma_\ve$ above, and direct calculations.

Then items (i) and item (ii) in Proposition \ref{Prop} is proven and also it is easy to check that \eqref{EADF} stands due to the choice of the cutoff function $\eta(x_1)$. Now it remains to derive \eqref{estisqu}, we define
\[
V:=\int_{\mS}\bl{v}\cdot\nabla \bl{a}\cdot \bl{v} dx=\int_{\mS_L\cup\mS_0\cup\mS_R} \bl{v}\cdot\nabla \bl{a}\cdot \bl{v}dx.
\]

{\noindent\bf Estimates of the $\mS_0$-part integration:}\\[1mm]

In the distorted part $\mS_0$, we denote that
\[
\bl{v}=v_s(s,t)\bl{e_s}+v_t(s,t)\bl{e_t},
\]
where the coordinates $(s,t)$ and vectors $\bl{e_s}$, $\bl{e_t}$ are defined in Section \ref{PRE}. Noting that
\[\bl{e_t}\cdot\na=\p_t\,,\quad \bl{e_s}\cdot\na=\gamma(s,t)\p_s\,,\quad \f{d\bl{e_s}}{ds}=\f{\kappa(s,t)}{\gamma(s,t)}\bl{e_t}\,,\]
in $\mS_0$, one derives
\[
\begin{split}
\bl{v}\cdot\nabla \bl{a}&=\gamma(s,t) v_s(s,t)\f{\p}{\p s}(\sigma_\ve'(t)\bl{e_s})+v_t(s,t)\f{\p}{\p t}(\sigma_\ve'(t)\bl{e_s})\\
&=\kappa(s,t) v_s(s,t)\sigma_\ve'(t)\bl{e_t}+v_t(s,t)\sigma_\ve''(t)\bl{e_s}\,.
\end{split}
\]
Hence, we have
\[
\bl{v}\cdot\nabla \bl{a}\cdot \bl{v}=\left(\sigma_\ve''(t)+\kappa(s,t)\sigma_\ve'(t)\right)v_s(s,t)v_t(s,t),\q\text{in}\q\mS_0.
\]
Recalling \eqref{para}, we deduce
\[
\begin{split}
	\int_{\mS_0}\bl{v}\cdot\nabla \bl{a}\cdot \bl{v}dx=&\int_0^\dl\int_{-s_0}^{s_0}\left(\sigma_\ve''(t)+\kappa(s,t)\sigma_\ve'(t)\right)v_s(s,t)v_t(s,t)\f{1}{\gamma(s,t)}ds dt\\[1mm]
	=&\int_0^\dl\int_{-s_0}^{s_0}\sigma_\ve''(t)v_s(s,t)v_t(s,t)\f{1}{\gamma(s,t)}dsdt+\int_0^\dl\int_{-s_0}^{s_0}\f{\kappa(s,t)}{\gamma(s,t)}\sigma_\ve'(t)v_s(s,t)v_t(s,t)dsdt.
\end{split}
\]
Noting that $\sigma'_\ve(t)$ vanishes near $\p\mS\cap\p\mS_0$, integration by parts for the first term of the right hand side of the above equality on $t$ indicate that
\be\label{s0est}
\begin{split}
\int_{\mS_0}\bl{v}\cdot\nabla \bl{a}\cdot \bl{v}dx=&-\int_0^\dl\int_{-s_0}^{s_0}\sigma_\ve'(t)\p_t\left(\f{v_s}{\gamma}\right)v_tdsdt-\int_0^\dl\int_{-s_0}^{s_0}\sigma_\ve'(t)v_s\p_tv_t\f{1}{\gamma(t,s)}dsdt\\
&+\int_0^\dl\int_{-s_0}^{s_0}\f{\kappa(s,t)}{\gamma(s,t)}\sigma_\ve'(t)v_s(s,t)v_t(s,t)dsdt.
\end{split}
\ee
Recalling \eqref{DIV-CURL}$_1$, the divergence-free property of $\bl{v}$ in the curvilinear coordinates follows:
\[
\gamma\p_sv_s+\p_tv_t-\kappa v_t=0.
\]
Then inserting the divergence-free property into \eqref{s0est}, we obtain that
\be\label{s1est}
\begin{split}
\int_{\mS_0}\bl{v}\cdot\nabla \bl{a}\cdot \bl{v}dx&=-\int_0^\dl\int_{-s_0}^{s_0}\sigma_\ve'(t)\p_t\left(\f{v_s}{\gamma}\right)v_tdsdt+\int_0^\dl\int_{-s_0}^{s_0}\sigma_\ve'(t)v_s\p_sv_sdsdt\\
&:=V_1+V_2.
\end{split}
\ee
First, noting that $\gamma(s,t)>\gamma_0>0$ is smooth, and $\sigma_\ve'(t)$, which is supported on $[0,\ve]$, satisfies
\[
|\sigma_\ve'(t)|\leq\f{2\Phi\ve}{t},\q\forall t\in[0,\ve],
\]
one bounds $V_1$ by using the Cauchy-Schwarz inequality and the Poincar\'e inequality in Lemma \ref{TORPOIN}
\be\label{V210}
\begin{split}
|V_{1}|\leq& C \Phi\ve\left(\int_0^\dl\int_{-s_0}^{s_0}|\na \bl{v}|^2dsdt\right)^{1/2}\un{\left(\int_0^\dl\int_{-s_0}^{s_0}\f{|v_t|^2}{t^2}dsdt\right)^{1/2}}_{V_{11}}.
\end{split}
\ee
Due to $v_t=0$ on the $t=0$, the part $V_{11}$ can be estimated by the one dimensional Hardy inequality. In fact
\[
\begin{split}
\int_0^\dl\f{|v_t(s,t)|^2}{t^2}dt&=-\int_0^\dl|v_t(s,t)|^2\left(\f{1}{t}\right)'dt\\
&=-\f{|v_t(s,\dl)|^2}{\dl}\int_0^\dl\f{|v_t(s,t)|^2}{t^2}dt+2\int_0^\dl \f{v_t(s,t)}{t}\p_tv_t(s,t)dt\\
&\leq 2\left(\int_0^\dl\f{|v_t(s,t)|^2}{t^2}dt\right)^{1/2}\left(\int_0^\dl|\p_tv_t(s,t)|^2dt\right)^{1/2},
\end{split}
\]
which indicates
\be\label{V211}
\int_0^\dl\f{|v_t(s,t)|^2}{t^2}dt\leq 4\int_0^\dl|\p_tv_t(s,t)|^2dt.
\ee
Thus one concludes
\[
|V_{1}|\leq C\Phi\ve\|\nabla \bl{v}\|_{L^2(\mS_0)}^2
\]
by combining \eqref{V210} and \eqref{V211}. For $V_{2}$ in \eqref{s1est}, it follows that
\be\label{V221E}
\begin{aligned}
V_{2}&=\f{1}{2}\int_0^\dl\int_{-s_0}^{s_0}\sigma_\ve'(t)\p_s(v_s)^2dsdt\\
	&=\f{1}{2}\int_{0}^1\sigma_\ve'(x_2)(v_1)^2(0,x_2)dx_2-\f{1}{2}\int_{0}^{c_0}\sigma_\ve'(y_2)(v_1)^2(0,y_2)dy_2.
\end{aligned}
\ee
The second equality above is established due to the Newton-Leibniz formula and fact that the curvilinear coordinates  $(s,t)$ turns to be Euclidean in $\mS\backslash\mS_0$.\\[1mm]

{\noindent\bf Estimates in $\mS_L$ and $\mS_R$.}\\[1mm]

The cases in subsets $\mS_L$ and $\mS_R$ are similar, thus we only discuss the latter one for simplicity. At the beginning, we denote that
\[
\mS_R:=\mS_{R1}\cup\mS_{R2},
\]
where
\[
\left\{
\begin{aligned}
&\mS_{R1}=\mS\cap\{x_1\in[0,\,e^{2/\ve}]\};\\
&\mS_{R2}=\mS\cap\{x_1\in(e^{2/\ve},\i)\}.
\end{aligned}
\right.
\]
Direct calculation shows
\[
\begin{aligned}
	\int_{\mS_R}\bl{v}\cdot\nabla \bl{a}\cdot \bl{v}dx=&\int_{\mS_{R1}}\eta'(x_1)\left(P^R_\Phi(x_2)-\sigma'_\ve(x_2)\right)\left((v_1)^2-(v_2)^2\right)dx\\
	&+\int_{\mS_{R1}}\eta''(x_1)\left(\int_0^{x_2}\left(\sigma'_\ve(\xi)-P^R_\Phi(\xi)\right)d\xi\right)v_1v_2dx\\
	&+\int_{\mS_{R1}}\sigma_\ve''(x_2)\left(1-\eta(x_1)\right)v_1v_2dx+\int_{\mS_{R2}}\eta(x_1)(P_{\Phi}^R)'(x_2)v_1v_2dx\\
:=&J_1+J_2+J_3+J_4.
\end{aligned}
\]
Here, noticing that $|\eta'|\leq 2e^{-2/\ve}$, $|\eta''|\leq 4e^{-4\ve}$ and
\[
|\sigma_\ve'(t)|\leq \Phi e^{1/\ve}\q\forall t\in[0,\dl)
\]
which follows from \eqref{VEEST}, one concludes that
\[
|J_1|+|J_2|\leq C\Phi e^{-1/\ve}\int_{\mS_{R1}}|\bl{v}|^2dx\leq C\Phi\ve\|\nabla\bl{v}\|_{L^2(\mS)}^2
\]
by applying the Cauchy-Schwarz inequality and the Poincar\'e inequality in Corollary \ref{CorPoin}. Moreover, adopting integrating by parts, one deduces
\[
\begin{split}
J_3=&-\int_{\mS_{R1}}\sigma_\ve'(x_2)\left(1-\eta(x_1)\right)\p_{x_2}v_1v_2dx-\int_{\mS_{R1}}\sigma_\ve'(x_2)\left(1-\eta(x_1)\right)v_1\p_{x_2}v_2dx\\
:=&J_{31}+J_{32}.
\end{split}
\]
Via an analogous route as we go through for $V_{1}$ in \eqref{s1est} above, one deduces
\[
|J_{31}|\leq C \Phi\ve\|\nabla\bl{v}\|_{L^2(\mS)}^2.
\]
For the term $J_{32}$, applying the divergence-free property of $\bl{v}$ and using integration by parts, one arrives
\[
\begin{aligned}
J_{32}=&\f{1}{2}\int_{\mS_{R1}}\sigma_\ve'(x_2)\left(1-\eta(x_1)\right)\p_{x_1}(v_1)^2dx\\ =&\f{1}{2}\int_{\mS_{R1}}\sigma_\ve'(x_2)\eta'(x_1)(v_1)^2dx-\f{1}{2}\int_{0}^1\sigma_\ve'(x_2)(v_1)^2(0,x_2)dx_2\\
:=&J_{321}+J_{322}.
\end{aligned}
\]
Noting that $J_{321}$ can be estimated in the same way as we do on $J_1$ and $J_2$, that is
\[
|J_{321}|\leq C \Phi\ve\|\nabla\bl{v}\|_{L^2(\mS)}^2.
\]
Due to $J_{322}$ being cancelled out with the first term in \eqref{V221E}$_2$, it remains only to estimate $J_4$. Recall the $L^\i$ bound of $(P_\Phi^R)'$ in \eqref{EHP}, one concludes that
\[
J_4\leq C\f{\al\Phi}{1+\al}\|\na \bl{v}^N\|_{L^2(\mS)}^2.
\]
Collecting the above estimates and cancellations, we derive that
\[
|V|=\left|\int_{\mS}\bl{v}\cdot\nabla \bl{a}\cdot \bl{v}dx\right|\leq C\Phi \left(\ve+\f{\al}{1+\al}\right)\|\na \bl{v}\|_{L^2(\mS)}^2,
\]
which concludes \eqref{estisqu}. This completes the proof of Proposition \ref{Prop}.

\qed

\subsection{Existence of the weak solution}\label{SEC3627}
We will look for a solution to \eqref{NS}-\eqref{GL3} of the form
\be\label{EEPPSS}
\bl{u}=\bl{v}+\bl{a}.
\ee
Thus, our problem turns to the following equivalent form:
\begin{problem}\label{PP2}
Find $(\bl{v},p)$ such that
\[
	\left\{
	\begin{aligned}
		&\bl{v}\cdot\nabla \bl{v}+\bl{a}\cdot\nabla \bl{v}+\bl{v}\cdot\nabla \bl{a}+\na p-\Dl \bl{v}=\Dl \bl{a}-\bl{a}\cdot\nabla \bl{a},\\
		&\na\cdot \bl{v}=0,\\
	\end{aligned}
	\right.\q\text{in }\q\mS,
\]
	subject to the Navier-slip boundary condition
	\be\label{BDR}
	\left\{
	\begin{aligned}
		&2(\mathbb{S}\bl{v}\cdot\bl{n})_{\mathrm{tan}}+\al \bl{v}_{\mathrm{tan}}=0,\\
		&\bl{v}\cdot\bl{n}=0,\\
	\end{aligned}
	\right.\q\text{on }\q\p\mS,
	\ee
	with the asymptotic behavior as $|x|\to\i$
	\[
	\bl{v}(x)\to \bl{0},\q\text{as}\q |x|\to \i.
	\]
\end{problem}

\qed

From the weak formulation \eqref{weaksd}, we have that $\bl{v}$ satisfies the following weak formulation:
\begin{definition}\label{defws}
	Let $\bl{a}$ be a smooth vector satisfying the properties stated in the above. We say that $\bl{v}\in \mathcal{H}_\sigma(\mS)$ is a weak solution of Problem \ref{PP2} if
	\be\label{EQU111}
	\begin{split}
		&2\int_{\mS}\mathbb{S}\bl{v}:\mathbb{S}\bl{\varphi} dx+\al\int_{\p\mS}\bl{v}_{\mathrm{tan}}\cdot\bl{\varphi}_{\mathrm{tan}}dS+\int_{\mS}\bl{v}\cdot\nabla \bl{v} \cdot \bl{\varphi} dx+\int_{\mS}\bl{v}\cdot\nabla \bl{a}\cdot\bl{\varphi} dx\\
		+&\int_{\mS}\bl{a}\cdot\nabla \bl{v}\cdot\bl{\varphi} dx=\int_{\mS}\big(\Dl \bl{a}-\bl{a}\cdot\nabla \bl{a}\big)\cdot\bl{\varphi} dx
	\end{split}
	\ee
	holds for any vector-valued function $\bl{\varphi}\in \mathcal{H}_\sigma(\mS)$.
\end{definition}

\qed

Now we state our main result of this part.
\begin{theorem}\label{THM3.8}
	There is a constant $\Phi_0>0$ depending on the curvature of $\p\mS$ such that if $\f{\al\Phi}{1+\al}<\Phi_0$, then Problem \ref{PP2} admits at least one weak solution $(\bl{v},p)\in \mathcal{H}_\sigma(\mS)\times L^2_{\mathrm{loc}}(\overline{\mS})$, with
\be\label{EOFV627}
	\|\bl{v}\|_{H^1(\mS)}\leq C(\|\bl{a}\cdot\na \bl{a}\|_{L^2(\mS_{e^{C\Phi}})}+\|\Dl\bl{a}\|_{L^2(\mS_{e^{C\Phi}})})\leq \Phi e^{C\Phi}\,.
\ee
\end{theorem}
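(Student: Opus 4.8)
The plan is to reduce Problem \ref{PP2} to a fixed point problem handled by the Galerkin method and the Brouwer fixed point theorem (Lemma \ref{FUNC}). First I would note that the flux carrier $\bl{a}$ built in Proposition \ref{Prop} moves the inhomogeneous right-hand side $\Dl\bl{a}-\bl{a}\cdot\na\bl{a}$ entirely onto data that is compactly supported modulo the far-field Poiseuille tails; since $\bl{P}^i_\Phi$ is harmonic up to a constant and $\bl{P}^i_\Phi\cdot\na\bl{P}^i_\Phi=0$ for a shear flow, both $\Dl\bl{a}$ and $\bl{a}\cdot\na\bl{a}$ are supported in $\mS_{e^{2/\ve}}$, hence in $\mS_{e^{C\Phi}}$ after choosing $\ve\simeq\Phi^{-1}$ (this will also be the origin of the $e^{C\Phi}$ scale in \eqref{EOFV627}). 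This gives $\|\Dl\bl{a}\|_{L^2(\mS)}+\|\bl{a}\cdot\na\bl{a}\|_{L^2(\mS)}\le\Phi e^{C\Phi}$, which I would verify from the explicit formulas \eqref{PF}, \eqref{Cons1}, \eqref{Cons} together with the bounds \eqref{VEEST} on $\sigma_\ve$ and its derivatives.

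Next I would set up the Galerkin scheme: pick a basis $\{\bl{\varphi}_k\}$ of $\mathcal{H}_\sigma(\mS)$ (for instance from $\bl{X}$, which is dense), seek $\bl{v}^N=\sum_{k=1}^N c_k^N\bl{\varphi}_k$ solving the finite-dimensional system obtained by testing \eqref{EQU111} against $\bl{\varphi}_1,\dots,\bl{\varphi}_N$, and define the operator $P:\mathbb{R}^N\to\mathbb{R}^N$ whose $k$-th component is the residual of that equation. The crucial structural facts are: the trilinear term $\int_\mS\bl{v}\cdot\na\bl{v}\cdot\bl{v}\,dx=0$ by the divergence-free and impermeability conditions, and likewise $\int_\mS\bl{a}\cdot\na\bl{v}\cdot\bl{v}\,dx=0$, so testing against $\bl{v}^N$ itself leaves only $2\|\mathbb{S}\bl{v}^N\|_{L^2}^2+\al\|\bl{v}^N_{\mathrm{tan}}\|_{L^2(\p\mS)}^2+\int_\mS\bl{v}^N\cdot\na\bl{a}\cdot\bl{v}^N\,dx$ on the left against the linear functional on the right. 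Applying Corollary \ref{cor-korn} to get $\|\bl{v}^N\|_{H^1}^2\lesssim\|\mathbb{S}\bl{v}^N\|_{L^2}^2$, and then the perturbative bound \eqref{estisqu} from Proposition \ref{Prop} to absorb $|\int_\mS\bl{v}^N\cdot\na\bl{a}\cdot\bl{v}^N\,dx|\le C\Phi(\ve+\tfrac{\al}{1+\al})\|\na\bl{v}^N\|_{L^2}^2$ into the left side provided $\ve$ is small and $\tfrac{\al\Phi}{1+\al}<\Phi_0$, I obtain $P(\bl{c}^N)\cdot\bl{c}^N\ge0$ on a sphere of radius $\rho=C(\|\Dl\bl{a}\|_{L^2}+\|\bl{a}\cdot\na\bl{a}\|_{L^2})\le\Phi e^{C\Phi}$. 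Lemma \ref{FUNC} then yields a solution $\bl{v}^N$ with $\|\bl{v}^N\|_{H^1(\mS)}\le\rho$, the bound \eqref{EOFV627}.

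Then I would pass to the limit $N\to\infty$: the uniform $H^1(\mS)$ bound gives a weakly convergent subsequence $\bl{v}^N\rightharpoonup\bl{v}$ in $\mathcal{H}_\sigma(\mS)$, with strong convergence in $L^2_{\mathrm{loc}}(\overline{\mS})$ (and $L^4_{\mathrm{loc}}$ by Sobolev embedding in 2D) on every truncation $\mS_\zeta$ by Rellich--Kondrachov; this suffices to pass to the limit in each term of \eqref{EQU111} against a fixed $\bl{\varphi}_k$ — the only delicate point being the trilinear term $\int_\mS\bl{v}^N\cdot\na\bl{v}^N\cdot\bl{\varphi}_k\,dx$, which I would rewrite as $-\int_\mS\bl{v}^N\cdot\na\bl{\varphi}_k\cdot\bl{v}^N\,dx$ (using $\na\cdot\bl{v}^N=0$ and $\bl{\varphi}_k\cdot\bl{n}|_{\p\mS}=0$) and control using compact support of $\bl{\varphi}_k$ plus the $L^2_{\mathrm{loc}}$-strong convergence. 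By density of $\bigcup_N\mathrm{span}\{\bl{\varphi}_k\}$ the limiting identity holds for all $\bl{\varphi}\in\mathcal{H}_\sigma(\mS)$, so $\bl{v}$ is a weak solution, and lower semicontinuity of the norm preserves \eqref{EOFV627}. Finally the pressure $p\in L^2_{\mathrm{loc}}(\overline{\mS})$ is recovered via De Rham's lemma (Lemma \ref{DeRham}) and Lemma \ref{LEM312} applied on each bounded Lipschitz piece, exactly as in Lemma \ref{pressure}.

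The main obstacle is the bookkeeping that makes all constants independent of $\al$: one must ensure that the smallness threshold $\Phi_0$ comes only through the combination $\tfrac{\al\Phi}{1+\al}$ in \eqref{estisqu} and through the $\al$-free Korn constant of Corollary \ref{cor-korn}, and that the $\al$-boundary term $\al\|\bl{v}^N_{\mathrm{tan}}\|_{L^2(\p\mS)}^2$ is simply discarded as nonnegative rather than estimated — so the energy inequality never sees $\al$ except with a favorable sign. The secondary technical care is the choice $\ve\simeq\Phi^{-1}$ (or $\ve$ a fixed small constant when $\Phi\lesssim1$), balancing the requirement $\ve<\dl/2$ against making $C\Phi\ve$ small enough to absorb, which is what produces the region $\mS_{e^{C\Phi}}$ and hence the $\Phi e^{C\Phi}$ bound; everything else is routine Galerkin analysis.
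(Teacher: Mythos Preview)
Your overall strategy is exactly the paper's: Galerkin approximation, coercivity via Corollary \ref{cor-korn} and the absorption estimate \eqref{estisqu}, Brouwer's lemma (Lemma \ref{FUNC}) for the finite-dimensional solvability, weak compactness to pass to the limit, and De Rham (Lemmas \ref{DeRham}--\ref{LEM312}) for the pressure. The handling of the trilinear cancellations, the discarding of the nonnegative $\al$-boundary term, and the choice $\ve\simeq\Phi^{-1}$ are all as in the paper.

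There is, however, one genuine error. You assert that ``$\Dl\bl{a}$ and $\bl{a}\cdot\na\bl{a}$ are supported in $\mS_{e^{2/\ve}}$'' and then write $\|\Dl\bl{a}\|_{L^2(\mS)}\le\Phi e^{C\Phi}$. This is false for $\Dl\bl{a}$: in the far tail $\{x_1>e^{2/\ve}\}$ one has $\bl{a}=\bl{P}^R_\Phi$ and hence $\Dl\bl{a}=(P^R_\Phi)''\bl{e_1}=-C_R\bl{e_1}$, a nonzero constant vector; thus $\|\Dl\bl{a}\|_{L^2(\mS)}=\infty$. Consequently your bound on $A_3=\int_\mS\bl{v}^N\cdot\Dl\bl{a}\,dx$ cannot be closed uniformly in $N$ from compact support alone (the support of $\bl{v}^N$ grows with $N$), and the Brouwer coercivity estimate breaks down as written.

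The missing ingredient, which the paper uses explicitly, is the zero-flux property of $\bl{v}^N$: since $\bl{v}^N\in\mathcal{H}_\sigma(\mS)$, one has $\int_0^1 v_1^N(x_1,x_2)\,dx_2=0$ for every $x_1>0$ (cf.\ the argument in Corollary \ref{CorPoin}). Therefore
\[
\int_{(e^{2/\ve},\infty)\times(0,1)}\bl{v}^N\cdot\Dl\bl{a}\,dx=-C_R\int_{e^{2/\ve}}^{\infty}\!\int_0^1 v_1^N\,dx_2\,dx_1=0,
\]
and similarly on the left. This is precisely what reduces $A_3$ to an integral over $\mS_{e^{2/\ve}}$ and explains why only the truncated norm $\|\Dl\bl{a}\|_{L^2(\mS_{e^{C\Phi}})}$ appears in \eqref{EOFV627}. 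Once you insert this zero-flux cancellation, the rest of your argument goes through and coincides with the paper's proof.
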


\qed
\begin{proof}
Set
\[
\bl{X}:=C^\infty_{\sigma,c}(\overline{\mS};\,\mathbb{R}^2)=\big\{\bl{\varphi}\in C^\infty_c(\overline{\mS}\,;\,\mathbb{R}^2):\, \na\cdot \bl{\varphi}=0,\ \bl{\varphi}\cdot\bl{n}\big|_{\p\mS}=0\big\},
\]
and $\{\bl{\varphi}_k\}_{k=1}^\i\subset\bl{X}$ be an unit orthonormal basis of $\mathcal{H}_\sigma(\mS)$, that is:
\[
\langle \bl{\varphi}_i,\bl{\varphi}_j\rangle_{H^1(\mS)}=\begin{cases}
	1,&\q\text{if }\q i=j;\\
	0,&\q\text{if }\q i\neq j,\\
\end{cases}
\]
$\forall i,j\in\mathbb{N}$. We look for an approximation of $\bl{v}$ of the form
\[
\bl{v}^N(x)=\sum_{i=1}^Nc_i^N\bl{\varphi}_i(x).
\]
Testing the weak formulation \eqref{EQU111} by $\bl{\varphi}_i$, with $i=1,2,...,N$, one has
\[
\begin{split}
	&2\sum_{i=1}^Nc_i^N\int_{\mS}\mathbb{S}\bl{\varphi}_i:\mathbb{S}\bl{\varphi}_j dx+\al\sum_{i=1}^N c_i^N\int_{\p\mS} (\bl{\varphi}_i)_{\mathrm{tan}}(\bl{\varphi}_j)_{\mathrm{tan}}dS+\sum_{i,k=1}^Nc_i^Nc_k^N\int_{\mS}\bl{\varphi}_i\cdot\nabla \bl{\varphi}_k\cdot \bl{\varphi}_jdx\\
	+&\sum_{i=1}^N\int_{\mS}\bl{\varphi}_i\cdot\nabla \bl{a}\cdot \bl{\varphi}_jdx+\sum_{i=1}^Nc_i^N\int_{\mS}\bl{a}\cdot\nabla \bl{\varphi}_i\cdot \bl{\varphi}_jdx=\int_{\mS}\big(\Dl \bl{a}-\bl{a}\cdot\nabla \bl{a}\big)\cdot \bl{\varphi}_jdx,\q\forall j=1,2,...,N.
\end{split}
\]
This is a system of nonlinear algebraic equations of $N$-dimensional vector
\[
\bl{c}^N:=(c_1^N,c_2^N,...,c_N^N).
\]
We denote ${P}:\,\mathbb{R}^N\to\mathbb{R}^N$ such that
\bes
\begin{split}
	\big({P}(\bl{c}^N)\big)_j=&2\sum_{i=1}^Nc_i^N\int_{\mS}\mathbb{S}\bl{\varphi}_i:\mathbb{S}\bl{\varphi}_j dx+\al\sum_{i=1}^N c_i^N\int_{\p\mS} (\bl{\varphi}_i)_{\mathrm{tan}}\cdot(\bl{\varphi}_j)_{\mathrm{tan}}dS+\sum_{i,k=1}^Nc_i^Nc_k^N\int_{\mS}\bl{\varphi}_i\cdot\nabla \bl{\varphi}_k\cdot \bl{\varphi}_jdx\\
	&+\sum_{i=1}^N\int_{\mS}\bl{\varphi}_i\cdot\nabla \bl{a}\cdot \bl{\varphi}_jdx+\sum_{i=1}^Nc_i^N\int_{\mS}\bl{a}\cdot\nabla \bl{\varphi}_i\cdot \bl{\varphi}_jdx-\int_{\mS}\big(\Dl \bl{a}-\bl{a}\cdot\nabla \bl{a}\big)\cdot \bl{\varphi}_jdx,\\
	&\hskip 10cm\q\forall j=1,2,...,N.
\end{split}
\ees
It is easy to check that
\[
{P}(\bl{c}^N)\cdot\bl{c}^N=\un{2\int_{\mS}|\mathbb{S} \bl{v}^N|^2dx+\al\int_{\p\mS}|(\bl{v}^N)_{\mathrm{tan}}|^2dS}_{A_1}+\un{\int_{\mS}\left((\bl{v}^N+\bl{a})\cdot\nabla(\bl{v}^N+\bl{a})\right)\cdot \bl{v}^Ndx}_{A_2}-\un{\int_{\mS}\bl{v}^N\cdot\Dl \bl{a}dx}_{A_3}.
\]
By Lemma \ref{lem-korn}, we have
\[
A_1\geq C_0\int_{\mS}|\nabla \bl{v}^N|^2dx.
\]

Next, by using integration by parts, together with the divergence-free property of $\bl{v}^N$ and $\bl{a}$, one knows that
\[
A_2=\un{\int_{\mS}\bl{v}^N\cdot\nabla \bl{a}\cdot \bl{v}^Ndx}_{V}+\un{\int_{\mS}\bl{a}\cdot\nabla \bl{a}\cdot \bl{v}^Ndx}_{K}.
\]
We now focus on the term $V$. Applying \emph{(iii)} in Proposition \ref{Prop} to $\bl{v}^N$, one deduces
\be\label{ESTA21}
|V|\leq C_1\Phi\left(\ve+\f{\al}{1+\al}\right)\|\na\bl{v}^N\|_{L^2(\mS)}^2,
\ee
where the constant $C_1$ is independent with $N$. For the term $K$, since $\bl{a}$ equals to the Poiseuille flow $\bl{P}^L_{\Phi}$ or $\bl{P}^R_{\Phi}$ in $\mS-\mS_{e^{2/\ve}}$, we have $\bl{a}\cdot\nabla \bl{a}\equiv 0$ in $\mS-\mS_{e^{2/\ve}}$.  Using the Cauchy-Schwarz inequality and the Poincar\'e inequality, one arrives at
\be\label{ESTA22}
\begin{split}
|K|=\left|\int_{\mS}\bl{a}\cdot\nabla \bl{a}\cdot \bl{v}^Ndx\right|
\leq C\|\bl{a}\cdot\na \bl{a}\|_{L^2(\mS_{e^{2/\ve}})}\|\na\bl{v}^N\|_{L^2(\mS)}\,.
\end{split}
\ee
Thus, by combining \eqref{ESTA21} and \eqref{ESTA22}, one deduces
\[
|A_2|\leq C_1\Phi\left(\ve+\f{\al}{1+\al}\right)\|\na\bl{v}^N\|_{L^2(\mS)}^2+C\|\bl{a}\cdot\na \bl{a}\|_{L^2(\mS_{e^{2/\ve}})}\|\na\bl{v}^N\|_{L^2(\mS)}.
\]
Finally, by the construction of the Poiseuille flow $\bl{P}_{\Phi}^{R}$ and $\bl{P}_{\Phi}^{L}$, we have
\[
\int_{(-\i,-e^{2/\ve})\times(0,c_0)}\bl{v}^N\cdot\Dl \bl{a}dy=-C_L\int^{-e^{2/\ve}}_{-\i}\int_{0}^{c_0}\bl{v}^N\cdot\bl{e_1}'dy=0
\]
and
\[
\int_{(e^{2/\ve},+\infty)\times(0,1)}\bl{v}^N\cdot\Dl \bl{a}dx=-C_R\int^{\i}_{e^{2/\ve}}\int_{0}^{1}\bl{v}^N\cdot\bl{e_1}dx=0\,.
\]
Thus, by the Cauchy-Schwarz inequality and the Poincar\'e inequality, we deduce that
\[
|A_3|=\left|\int_{\mS_{e^{2/\ve}}}\bl{v}^N\cdot\Dl\bl{a}dx\right|\leq C\|\Dl\bl{a}\|_{L^2(\mS_{e^{2/\ve}})}\|\na\bl{v}^N\|_{L^2(\mS)}.
\]

Substituting the above estimates for $A_1$--$A_3$, and choosing $\ve>0$ being sufficiently small such that $C_1\ve\Phi<\f{C_0}{2}$, one derives
\[
{P}(\bl{c}^N)\cdot\bl{c}^N\geq\|\bl{v}^N\|_{H^1(\mS)}\left(\left(\f{C_0}{2}-C_1\f{\al\Phi}{1+\al}\right)\|\bl{v}^N\|_{H^1(\mS)}-C(\|\bl{a}\cdot\na \bl{a}\|_{L^2(\mS_{e^{2/\ve}})}+\|\Dl\bl{a}\|_{L^2(\mS_{e^{2/\ve}})})\right),
\]
which guarantees
\[
{P}(\bl{c}^N)\cdot\bl{c}^N\geq0,
\]
provided
\[
\f{\al\Phi}{1+\al}< \Phi_0:=\f12C_1^{-1}C_0\q\text{and}\q|\bl{c}^N|=\|\bl{v}^N\|_{H^1(\mS)}\geq\f{C\left(\|\bl{a}\cdot\na \bl{a}\|_{L^2(\mS_{e^{2/\ve}})}+\|\Dl\bl{a}\|_{L^2(\mS_{e^{2/\ve}})}\right)}{C_0/2-C_1\al\Phi/(1+\al)}:=\rho.
\]
Using Lemma \ref{FUNC}, there exists
\be\label{ubound}
(\bl{v}^N)^*\in\text{span}\left\{\bl{\varphi}_1,\bl{\varphi}_2,...,\bl{\varphi}_N\right\},\q\text{and}\q\|(\bl{v}^N)^*\|_{H^1(\mS)}\leq\f{C\left(\|\bl{a}\cdot\na \bl{a}\|_{L^2(\mS_{e^{2/\ve}})}+\|\Dl\bl{a}\|_{L^2(\mS_{e^{2/\ve}})}\right)}{C_0/2-C_1\al\Phi/(1+\al)},
\ee
such that
\begin{equation}\label{app-N}
	\begin{split}
		&2\int_{\mS}\mathbb{S}(\bl{v}^N)^*:\mathbb{S}\bl{\phi}_Ndx+\al\int_{\p\mS}(\bl{v}^N)^*_{\mathrm{tan}}\cdot(\bl{\phi}_N)_{\mathrm{tan}}dS+\int_{\mS}(\bl{v}^N)^*\cdot\nabla (\bl{v}^N)^*\cdot \bl{\phi}_Ndx+\int_{\mS}(\bl{v}^N)^*\cdot\nabla \bl{a}\cdot\bl{\phi}_Ndx\\
		+&\int_{\mS}\bl{a}\cdot\nabla(\bl{v}^N)^*\cdot\bl{\phi}_N dx=\int_{\mS}\big(\Dl \bl{a}-\bl{a}\cdot\nabla \bl{a}\big)\cdot\bl{\phi}_Ndx,\q\forall\bl{\phi}_N\in\text{span}\left\{\bl{\varphi}_1,\bl{\varphi}_2,...,\bl{\varphi}_N\right\}.
	\end{split}
\end{equation}

The above bound \eqref{ubound} and Rellich-Kondrachov embedding theorem imply the existence of a field $\bl{v}\in \mH_{\sigma}(\mS)$ and a subsequence, which we will always denote by $(\bl{v}^N)^*$, such that
\begin{equation*}
	(\bl{v}^N)^*\to \bl{v}\quad \text{weakly in $\mH_{\sigma}(\mS)$}
\end{equation*}
and
\begin{equation*}
	(\bl{v}^N)^*\to \bl{v}\quad \text{strongly in $L^2(\mS')$, for all bounded $\mS'\subset\mS$}\,.
\end{equation*}
By passing to the limit in \eqref{app-N}, one obtains
\be\label{V}
\begin{split}
	&2\int_{\mS}\mathbb{S}\bl{v}:\mathbb{S}\bl{\varphi} dx+\al\int_{\p\mS}\bl{v}_{\mathrm{tan}}\cdot\bl{\varphi}_{\mathrm{tan}}dS+\int_{\mS}\bl{v}\cdot\nabla \bl{v}\cdot\bl{\varphi} dx+\int_{\mS}\bl{v}\cdot\nabla \bl{a}\cdot\bl{\varphi} dx\\
	+&\int_{\mS}\bl{a}\cdot\nabla \bl{v}\cdot\bl{\varphi} dx=\int_{\mS}\big(\Dl \bl{a}-\bl{a}\cdot\nabla \bl{a}\big)\cdot\bl{\varphi} dx,\q\text{for any}\q \bl{\varphi}\in \mH_\sigma(\mS).
\end{split}
\ee
It follows from \eqref{ubound} and the Fatou lemma for weakly convergent sequences that
\be\label{EEEEST1}
\|\bl{v}\|_{H^1(\mS)}\leq C\left(\|\bl{a}\cdot\na \bl{a}\|_{L^2(\mS_{e^{2/\ve}})}+\|\Dl\bl{a}\|_{L^2(\mS_{e^{2/\ve}})}\right)\,.
\ee
Now it remains to verify \eqref{EOFV627}. From the construction of $\bl{a}$ in \eqref{Cons1}--\eqref{Cons} and the estimate of $\sigma_\ve$ in \eqref{VEEST}, we have
\[
\left|\na^k\bl{a}\right|\leq C\Phi e^{\f1{\ve}}(\ve^{-1}e^{\f1{\ve}})^k,\q\text{for }k=0,1,2.
\]
According to the construction of $\bl{v}$ given before, it is legal to choose $\ve=\min\left\{\f{C_0}{4C_1\Phi},\f{\delta}{2}\right\}$. This indicates that
\[
\|\bl{a}\cdot\na \bl{a}\|_{L^2(\mS_{e^{2/\ve}})}+\|\Dl\bl{a}\|_{L^2(\mS_{e^{2/\ve}})}\leq Ce^{1/\ve}\left(\Phi^2\ve^{-1}e^{3/\ve}+\Phi\ve^{-2}e^{3/\ve}\right)\leq C\Phi\left(1+\Phi^2e^{C\Phi}\right),
\]
which gives
\[
\|\bl{v}\|_{H^1(\mS)}\leq \Phi e^{C\Phi}\,.
\]

Now we focus on the pressure. Let $\bl{v}$ be a weak solution of \eqref{EQU111} constructed in the above. Using \eqref{V}, one has $\bl{u}=\bl{v}+\bl{a}$ satisfies
\[
\int_{\mS}\nabla \bl{u}\cdot\nabla\bl{\phi}\,dx+\int_{\mS}\bl{u}\cdot\nabla \bl{u}\cdot\bl{\phi}\,dx=0,\q\text{for all}\q \bl{\phi}\in\{\bl{g}\in C_c^\i(\mS;\mathbb{R}^2):\,\text{div }\bl{g}=0\}.
\]
Thus by Lemma \ref{DeRham}, there exists $p\in\left(C_c^\i(\mS;\mathbb{R})\right)'$, such that
\be\label{NS11}
\Dl\bl{u}-\bl{u}\cdot\nabla\bl{u}=\nabla p
\ee
in the sense of distribution. Furthermore, we have that \eqref{NS11} is equivalent to
\be\label{DEFPi}
\text{div}\big(\nabla\bl{v}-\bl{v}\otimes\bl{v}-\bl{a}\otimes\bl{v}-\bl{v}\otimes\bl{a}\big)+\Dl \bl{a}+{C_{R}}{\eta(x_1)}\bl{e_1}+{C_{L}}{\eta(-y_1)}\bl{e_1}'-\bl{a}\cdot\nabla \bl{a}=\na\Pi,
\ee
with
\be\label{DEFPI}
\Pi=p+{C_{R}}{\int_{-\i}^{x_1}\eta(s)ds}-{C_{L}}{\int_{-\i}^{-y_1}\eta(s)ds}\,,
\ee
where $C_L$ and $C_R$ are Poiseuille constants defined in \eqref{POSS1}$_1$ and \eqref{POSS2}$_1$, respectively. By the definition of $\bl{a}$, one has both
\[
\Dl \bl{a}+{C_{R}}{\eta(x_1)}\bl{e_1}+{C_{L}}{\eta(-y_1)}\bl{e_1}'
\]
and $\bl{a}\cdot\nabla\bl{a}$ are smooth and have compact support. Since $\bl{v}\in H^1(\mS)$ and $\bl{a}$ is uniformly bounded, one deduces
\[
\nabla\bl{v}-\bl{v}\otimes\bl{v}-\bl{a}\otimes\bl{v}-\bl{v}\otimes\bl{a}\in L^2(\mS),
\]
directly by the Sobolev embedding and H\"older's inequality. Therefore one concludes the left hand side of \eqref{DEFPi} belongs to $H^{-1}(\mS)$. Then applying Lemma \ref{LEM312}, we have $\Pi\in L^2_{\mathrm{loc}}(\overline{\mS})$, which leads to $p\in L^2_{\mathrm{loc}}(\overline{\mS})$ by \eqref{DEFPI}.
\end{proof}

\subsection{Uniqueness result}
The rest part of this section is devoted to the proof of uniqueness. We will show that the solution $(\bl{u},{p})$ constructed earlier in this section with its flux being $\Phi$ is unique for $\Phi$ being sufficiently small and independent of $\al$.

\subsubsection{Estimate of the pressure}
Below, we give a proposition to show that an integration estimate related to the pressure in the truncated strip $\Upsilon_Z^{+}:=\lt(\mS_Z\backslash\mS_{Z-1}\rt)\cap \{x_1>0\}$ or $\Upsilon_Z^{-}:=\lt(\mS_Z\backslash\mS_{Z-1}\rt)\cap \{y_1<0\}$.

\begin{proposition}\label{P2.4}

Let $(\tilde{\bl{u}},\tilde{p})$ be an alternative weak solution of \eqref{NS} in the strip $\mS$, subject to the Navier-slip boundary condition \eqref{NBC}. If the total flux
\[
\int_{\mS\cap\{x_1=s\}}\tilde{\bl{u}}(s,x_2)\cdot\boldsymbol{e_1}dx_2=\Phi=\int_{\mS\cap\{x_1=s\}}{\bl{u}}(s,x_2)\cdot\boldsymbol{e_1}dx_2,\q\text{for any } s\geq 1,
\]
then the following estimate of $\bl{w}:=\tilde{\bl{u}}-{\bl{u}}$ and the pressure holds
\be\label{ZJZJ}
\left|\int_{\Upsilon^{\pm}_K}(\tilde{p}-p)w_1dx\right|\leq C\left(\|\bl{u}\|_{L^4(\Upsilon^{\pm}_K)}\|\nabla \bl{w}\|^2_{L^2(\Upsilon^{\pm}_K)}+\|\nabla \bl{w}\|_{L^2(\Upsilon^{\pm}_K)}^2+\|\nabla \bl{w}\|_{L^2(\Upsilon^{\pm}_K)}^3\right),\forall K\geq 2,
\ee
where $C>0$ is a constant independent of $K$.
\end{proposition}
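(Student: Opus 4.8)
Fix $K\geq2$ and write $\Upsilon:=\Upsilon^{\pm}_K$; in either case this is a rectangle inside the straight part of $\mS$, congruent to $(0,1)^2$ or to $(0,1)\times(0,c_0)$, so every Sobolev, Poincar\'e and Ladyzhenskaya constant attached to $\Upsilon$ may be taken independent of $K$. The idea is to recover $\tilde p-p$ as the Lagrange multiplier of the divergence constraint and to pair it with a Bogovskii potential of $w_1$ (here $w_1$ denotes the longitudinal component of $\bl{w}:=\tilde{\bl{u}}-\bl{u}$ in $\Upsilon$). Since $\bl{u}$ and $\tilde{\bl{u}}$ share the flux $\Phi$ and $\bl{w}$ is divergence free with $\bl{w}\cdot\bl{n}=0$ on $\p\mS$, the flux of $\bl{w}$ through every cross-section vanishes (integrate $\na\cdot\bl{w}=0$ over a slab, as in the proof of Corollary \ref{CorPoin}); integrating in the longitudinal variable yields $\int_{\Upsilon}w_1\,dx=0$, while the one-dimensional Poincar\'e inequality (the zero-mean step of Lemma \ref{POIN}) gives $\|w_1\|_{L^2(\Upsilon)}\leq C\|\na\bl{w}\|_{L^2(\Upsilon)}$. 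Lemma \ref{LEM2.1} then produces $\bl{\Psi}\in H^1_0(\Upsilon;\mathbb{R}^2)$ with $\na\cdot\bl{\Psi}=w_1$ and $\|\na\bl{\Psi}\|_{L^2(\Upsilon)}\leq C\|w_1\|_{L^2(\Upsilon)}\leq C\|\na\bl{w}\|_{L^2(\Upsilon)}$.

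Next I would test the two weak momentum identities against $\bl{\Psi}$, extended by zero. Every $\bl{\Psi}\in H^1_0(\Upsilon)$ is an $H^1$-limit of functions in $C_c^\infty(\Upsilon)$, each of which extends by zero to an element of $C_c^\infty(\mS)$, so the identity of Lemma \ref{pressure}, applied to $(\bl{u},p)$ and to $(\tilde{\bl{u}},\tilde p)$, passes to the limit (all integrals converging since $\bl{u},\tilde{\bl{u}}\in H^1_{\mathrm{loc}}\hookrightarrow L^4_{\mathrm{loc}}$ and $p,\tilde p\in L^2_{\mathrm{loc}}$) and remains valid with test field $\bl{\Psi}$. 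Subtracting the two identities and using $\tilde{\bl{u}}\cdot\na\tilde{\bl{u}}-\bl{u}\cdot\na\bl{u}=\bl{u}\cdot\na\bl{w}+\bl{w}\cdot\na\bl{u}+\bl{w}\cdot\na\bl{w}$ together with $\na\cdot\bl{\Psi}=w_1$, I obtain
\[
\int_{\Upsilon}(\tilde p-p)w_1\,dx=\int_{\Upsilon}\na\bl{w}:\na\bl{\Psi}\,dx+\int_{\Upsilon}(\bl{u}\cdot\na\bl{w})\cdot\bl{\Psi}\,dx+\int_{\Upsilon}(\bl{w}\cdot\na\bl{u})\cdot\bl{\Psi}\,dx+\int_{\Upsilon}(\bl{w}\cdot\na\bl{w})\cdot\bl{\Psi}\,dx.
\]
(The nonuniqueness of the pressure is harmless: adding a constant to $\tilde p-p$ changes nothing since $\int_{\Upsilon}w_1\,dx=0$.)

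Finally I would bound the four terms by H\"older's inequality together with $\|\bl{\Psi}\|_{L^4(\Upsilon)}\leq C\|\na\bl{\Psi}\|_{L^2(\Upsilon)}$ ($H^1_0\hookrightarrow L^4$ and Poincar\'e) and $\|\bl{w}\|_{L^4(\Upsilon)}\leq C\|\na\bl{w}\|_{L^2(\Upsilon)}$ (Ladyzhenskaya's inequality and the Poincar\'e inequality of Lemma \ref{POIN}): the first term is $\leq C\|\na\bl{w}\|_{L^2}\|\na\bl{\Psi}\|_{L^2}\leq C\|\na\bl{w}\|_{L^2}^2$; the second $\leq\|\bl{u}\|_{L^4}\|\na\bl{w}\|_{L^2}\|\bl{\Psi}\|_{L^4}\leq C\|\bl{u}\|_{L^4}\|\na\bl{w}\|_{L^2}^2$; the third, after using $\int_{\Upsilon}(\bl{w}\cdot\na\bl{u})\cdot\bl{\Psi}=-\int_{\Upsilon}(\bl{w}\cdot\na\bl{\Psi})\cdot\bl{u}$ (valid because $\na\cdot\bl{w}=0$ and $\bl{\Psi}|_{\p\Upsilon}=0$), $\leq\|\bl{w}\|_{L^4}\|\na\bl{\Psi}\|_{L^2}\|\bl{u}\|_{L^4}\leq C\|\bl{u}\|_{L^4}\|\na\bl{w}\|_{L^2}^2$; and the fourth $\leq\|\bl{w}\|_{L^4}\|\na\bl{w}\|_{L^2}\|\bl{\Psi}\|_{L^4}\leq C\|\na\bl{w}\|_{L^2}^3$, all norms taken over $\Upsilon$. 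Summing gives \eqref{ZJZJ} with $C$ independent of $K$. The one point needing genuine care is legitimising that $\bl{\Psi}$, which lies only in $H^1_0$ of the truncated box and is not compactly supported in the interior of $\mS$, may serve as a test field and thus generate the pressure term on the right-hand side; this is exactly the density observation above. Everything else is routine two-dimensional interpolation.
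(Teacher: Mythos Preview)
Your proof is correct and follows essentially the same route as the paper: both use the Bogovski\u{i} potential $\bl{\Psi}\in H^1_0(\Upsilon)$ of the zero-mean function $w_1$ to recast $\int_{\Upsilon}(\tilde p-p)w_1\,dx$ via the momentum equation, then bound the resulting viscous and convective terms by H\"older, Ladyzhenskaya and the Poincar\'e inequality of Lemma \ref{POIN}. The paper differs only cosmetically, integrating by parts once more so that all convective terms land on $\na\bl{\Psi}$ rather than on $\bl{\Psi}$ itself, and it treats the pressure pairing as $-\int_{\Upsilon}\na(\tilde p-p)\cdot\bl{\Psi}\,dx$ followed by the strong-form equation rather than invoking Lemma \ref{pressure}; your explicit density argument for $H^1_0$ test fields is in fact a cleaner justification of this step than the paper provides.
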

\pf We only show \eqref{ZJZJ} on the $\Upsilon^+$ since the rest part is similar. During the proof, we cancel the upper index ``$+$" of the domain for simplicity. Noticing
\[
\int_{\mS\cap\{x_1=s\}}w_1(s,x_2)dx_2\equiv0,\q\forall s\geq 1,
\]
by integrating the above equality for variable $s$ from $K-1$ to $K$,  we deduce that
\[
\int_{\Upsilon_K}w_1 dx=0,\q\forall K\geq 2.
\]
Using Lemma \ref{LEM2.1}, one derives the existence of a vector field $V$ satisfying \eqref{LEM2.11} with $f=w_1$. Applying equation \eqref{NS}$_1$, one arrives
\[
\bali
\int_{\Upsilon_K}(\tilde{p}-{p})w_1dx=&\int_{\Upsilon_K}(\tilde{p}-{p})\na\cdot \bl{V}dx\\
=&-\int_{\Upsilon_K}\nabla(\tilde{p}-{p})\cdot \bl{V}dx=\int_{\Upsilon_K}\left(\bl{w}\cdot\na \bl{w}+\bl{u}\cdot\nabla \bl{w}+\bl{w}\cdot\nabla \bl{u}-\Dl \bl{w}\right)\cdot \bl{V}dx.
\eali
\]
Using integration by parts, one deduces
\[
\int_{\Upsilon_K}(\tilde{p}-p)w_1dx=\sum_{i,j=1}^2\int_{\Upsilon_K}(\p_iw_j-w_iw_j-u_iw_j-u_jw_i)\p_iV_jdx.
\]
By applying H\"older's inequality and \eqref{LEM2.11} in Lemma \ref{LEM2.1}, one deduces that
\be\label{EP1}
\left|\int_{\Upsilon_K}(\tilde{p}-p)w_1dx\right|\leq C\left(\|\na \bl{w}\|_{L^2(\Upsilon_K)}+\|\bl{w}\|_{L^4(\Upsilon_K)}^2+\|\bl{u}\|_{L^4(\Upsilon_K)}\|\bl{w}\|_{L^4(\Upsilon_K)}\right)\|w_1\|_{L^2(\Upsilon_K)}.
\ee
%\pf By Lemma \ref{LEM2.1}, one derives the existence of a vector field $V$ satisfying \eqref{LEM2.11} with $f=p-\f{1}{|\Upsilon_Z|}\int_{\Upsilon_Z}pdx$. Multiplying \eqref{NS}$_1$ by $V$ and integrating on $\Upsilon_Z$, one finds
%\[
%\int_{\Upsilon_Z}\nabla\left(p-\bar{p}_{\Upsilon_Z}\right)\cdot Vdx=\int_{\Upsilon_Z}\left(\Dl u-u\cdot\nabla u\right)\cdot Vdx.
%\]
%Integration by parts, one deduces
%\be\label{EP1}
%\begin{split}
%\int_{\Upsilon_Z}(p-\bar{p}_{\Upsilon_Z})^2dx&=\int_{\Upsilon_Z}(p-\bar{p}_{\Upsilon_Z})\nabla\cdot Vdx\\
%&=\int_{\Upsilon_Z}(\p_iu_j-u_iu_j)\p_iV_jdx\\
%&\leq\|p-\bar{p}_R\|_{L^2(\Upsilon_Z)}\left(\|\nabla u\|_{L^2(\Upsilon_Z)}+\|u\|_{L^4(\Upsilon_Z)}^2\right).
%\end{split}
%\ee
Since $w_1$ has a zero mean value on each cross section $\{x_1=s\}$ for $s\geq 1$ and $w_2$ has zero boundary on in the $x_2$ direction, then Poincar\'e inequality in $x_2$ direction implies that
\be\label{PON3}
\|\bl{w}\|_{L^2(\Upsilon_K)}\leq C  \|\p_{x_2} \bl{w}\|_{L^2(\Upsilon_K)}.
\ee
Substituting \eqref{PON3} in \eqref{EP1}, also noting the Gagliardo-Nirenberg inequality
\[
\|\bl{w}\|_{L^4(\Upsilon_K)}^2\leq C\left(\|\bl{w}\|_{L^2(\Upsilon_K)}\|\na \bl{w}\|_{L^2(\Upsilon_K)}+\|\bl{w}\|_{L^2(\Upsilon_K)}^{2}\right),
\]
one concludes
\[
\left|\int_{\Upsilon_K}(\tilde{p}-p)w_1dx\right|\leq C\left(\|\bl{u}\|_{L^4(\Upsilon_K)}\|\nabla \bl{w}\|^2_{L^2(\Upsilon_K)}+\|\nabla \bl{w}\|_{L^2(\Upsilon_K)}^2+\|\nabla \bl{w}\|_{L^2(\Upsilon_K)}^3\right).
\]

\qed

\subsubsection{Main estimates of the uniqueness result}
\q\ Subtracting the equation of $\bl{u}$ from the equation of $\tilde{\bl{u}}$, one finds
\be\label{SUBT}
\bl{w}\cdot\nabla \bl{w}+\bl{u}\cdot\nabla \bl{w}+\bl{w}\cdot\nabla \bl{u}+\nabla(\tilde{p}-p)-\Dl \bl{w}=0.
\ee
Multiplying $\bl{w}$ on both sides of \eqref{SUBT}, and integrating on $\mS_\zeta$, one derives
\be\label{Maint0}
-\int_{\mathcal{S}_\zeta}\bl{w}\cdot\Dl \bl{w}dx=-\int_{\mathcal{S}_\zeta}\bl{w}\big(\bl{w}\cdot\nabla \bl{w}+\bl{u}\cdot\nabla \bl{w}+\bl{w}\cdot\nabla \bl{u}+\nabla (\tilde{p}-p)\big)dx.
\ee
Using the divergence-free property and the Navier-slip boundary condition of $\bl{u}$ and $\tilde{\bl{u}}$, one deduces
\[
\begin{split}
-\int_{\mathcal{S}_\zeta}\bl{w}\cdot\Dl \bl{w}dx&=-\int_{\mS_\zeta}w_i\p_{x_j}(\p_{x_j}w_i+\p_{x_i}w_j)dx\\
&=\sum_{i,j=1}^2\int_{\mS_\zeta}\p_{x_j}w_i(\p_{x_j}w_i+\p_{x_i}w_j)dx-\sum_{i,j=1}^2\int_{\p\mS_\zeta}w_in_j(\p_{x_j}w_i+\p_{x_i}w_j)dx\\
&=2\int_{\mS_\zeta}|\mathbb{S} \bl{w}|^2dx+\al\int_{\p\mS_\zeta\cap\p\mS}|w_{\tau}|^2dS\\
&\hskip 1cm-\sum_{i=1}^2\int_{\{x_1=\zeta\}}w_i(\p_{x_1}w_i+\p_{x_i} w_1)dx_2+\sum_{i=1}^2\int_{\{y_1=-\zeta\}}w_i(\p_{x_1}w_i+\p_{x_i}w_1)dy_2.
\end{split}
\]
%which indicates
%\[
%\int_{\p^C\mS_Z}|u_\tau|^2dS=-\f{2}{\al}\int_{\mS_Z}|\mathbb{S}u|^2dx+\f{1}{\al}\left(\int_{\Sigma_Z}u_i(\p_3u_i+\p_iu_3)dx_h-\int_{\Sigma_{-Z}}u_i(\p_3u_i+\p_iu_3)dx_h\right)-\f{1}{\al}\int_{\mS_Z}u\Dl udx.
%\]
Here $\bl{n}=(n_1,n_2)$ is the unit outer normal vector on $\p\mS$. Then one concludes that
\[
-\int_{\mathcal{S}_\zeta}\bl{w}\cdot\Dl \bl{w}dx+\int_{\{x_1=\zeta\}}|\bl{w}||\nabla \bl{w}|dx_2+\int_{\{y_1=-\zeta\}}|\bl{w}||\nabla \bl{w}|dy_2\geq 2\int_{\mS_\zeta}|\mathbb{S} \bl{w}|^2dx+\al\int_{\p\mS_\zeta\cap\p\mS}|\bl{w}_{\mathrm{tan}}|^2dS.
\]
Then using the Korn inequality \eqref{KN1.1} in Lemma \ref{lem-korn}, we can achieve that
\be\label{Maint1}
\int_{\mS_\zeta}|\na \bl{w}|^2dx\leq C\lt(-\int_{\mathcal{S}_\zeta}\bl{w}\cdot\Dl \bl{w}dx+\int_{\{x_1=\zeta\}}|\bl{w}||\nabla \bl{w}|dx_2+\int_{\{y_1=-\zeta\}}|\bl{w}||\nabla \bl{w}|dy_2\rt).
\ee
Now we focus on the right hand side of \eqref{Maint0}. Applying integration by parts, one derives
\be\label{Maint3}
\begin{split}
-\int_{\mathcal{S}_\zeta}\bl{w}\big(\bl{w}\cdot\nabla \bl{w}+\nabla (\tilde{p}-p)\big)dx=&-\int_{\mS\cap\{x_1=\zeta\}}\bl{w}\cdot\bl{e_1}\left(\f{1}{2}|\bl{w}|^2+(\tilde{p}-p)\right)dx_2\\
&+\int_{\mS\cap\{y_1=-\zeta\}}\bl{w}\cdot\bl{e_1}'\left(\f{1}{2}|\bl{w}|^2+(\tilde{p}-p)\right)dy_2.
\end{split}
\ee
%Using integration by parts, one derives
%\[
%\left|\int_{\mathcal{D}_\zeta}\bl{u}\cdot\nabla \bl{w}\cdot \bl{w}dx\right|=\f{1}{2}\left|\int_{\mS_\zeta}\mathrm{div}(\bl{u}|\bl{w}|^2)dx\right|\leq C_{\al,\Phi}\int_{\mS\cap\{x_3=\pm\zeta\}}|\bl{w}|^2dx_h.
%\]
%Similarly as above,
Applying H\"older's inequality, noting that $\bl{u}=\bl{v}+\bl{a}$, where $\bl{a}$ is the flux carrier constructed in Proposition \ref{Prop}, while $\bl{v}$ is the $H^1$-weak solution given in Section \ref{SEC3627}, one has
\be\label{Maint5}
\begin{split}
\left|-\int_{\mathcal{S}_\zeta}\big(\bl{w}\cdot\nabla \bl{u}\cdot \bl{w}+\bl{u}\cdot\na\bl{w}\cdot\bl{w}\big)dx\right|\leq&\, \|\nabla\bl{v}\|_{L^2(\mS_\zeta)}\|\bl{w}\|_{L^4(\mS_\zeta)}^2+\|\bl{v}\|_{L^4(\mS_\zeta)}\|\na\bl{w}\|_{L^2(\mS_\zeta)}\|\bl{w}\|_{L^4(\mS_\zeta)}\\
&+\|\na\bl{a}\|_{L^\i(\mS_\zeta)}\|\bl{w}\|_{L^2(\mS_\zeta)}^2+\|\bl{a}\|_{L^\i(\mS_\zeta)}\|\na\bl{w}\|_{L^2(\mS_\zeta)}\|\bl{w}\|_{L^2(\mS_\zeta)}\\
\leq&\, C\left(\|\bl{v}\|_{H^{1}(\mS_\zeta)}+\|\bl{a}\|_{W^{1,\i}(\mS_\zeta)}\right)\int_{\mS_\zeta}|\nabla \bl{w}|^2dx\\
\leq&\,\Phi e^{C \Phi}\int_{\mS_\zeta}|\nabla \bl{w}|^2dx.
\end{split}
\ee
Here in the second inequality, we have applied the Gagliardo-Nirenberg inequality and the Poincar\'e inequality \eqref{TORPIPEPOIN} in Lemma \ref{TORPOIN}, which indicate
\[
\|\bl{w}\|_{L^4(\mS_\zeta)}\leq C\left(\|\bl{w}\|^{1/2}_{L^2(\mS_\zeta)}\|\na\bl{w}\|^{1/2}_{L^2(\mS_\zeta)}+\|\bl{w}\|_{L^2(\mS_\zeta)}\right)\leq C\left(\int_{\mS_\zeta}|\nabla \bl{w}|^2dx\right)^{1/2}.
\]
Meanwhile, the third inequality in \eqref{Maint5} is guaranteed by \eqref{EOFV627} and Estimates for $\bl{a}$. Substituting \eqref{Maint1}, \eqref{Maint3} and \eqref{Maint5} in \eqref{Maint0}, one arrives
\[
\begin{split}
\int_{\mS_\zeta}|\nabla \bl{w}|^2dx\leq& C\left(\int_{\{x_1=\zeta\}}|\bl{w}|(|\na \bl{w}|+|\bl{w}|^2)dx_2+\int_{\{y_1=-\zeta\}}|\bl{w}|(|\na \bl{w}|+|\bl{w}|^2)dy_2+\Phi e^{C\Phi}\int_{\mS_\zeta}|\nabla \bl{w}|^2dx\right.\\
&\left.-\int_{\{x_1=\zeta\}}\bl{w}\cdot\bl{e_1}\left(\tilde{p}-p\right)dx_2+\int_{\{y_1=-\zeta\}}\bl{w}\cdot\bl{e_1}'\left(\tilde{p}-p\right)dy_2\right).
\end{split}
\]
Now one concludes that if $\Phi<<1$ being small enough such that
\bes
\Phi e^{C\Phi}<\f{1}{2},
\ees
then we achieve
\[
\begin{split}
\int_{\mS_\zeta}|\nabla \bl{w}|^2dx\leq C&\left(\int_{\{x_1=\zeta\}}|\bl{w}|(|\na \bl{w}|+|\bl{w}|^2)dx_2+\int_{\{y_1=-\zeta\}}|\bl{w}|(|\na \bl{w}|+|\bl{w}|^2)dy_2\right.\\
&\left.-\int_{\{x_1=\zeta\}}\bl{w}\cdot\bl{e_1}\left(\tilde{p}-p\right)dx_2+\int_{\{y_1=-\zeta\}}\bl{w}\cdot\bl{e_1}'\left(\tilde{p}-p\right)dy_2\right).
\end{split}
\]
Therefore,  one derives the following estimate by integrating with $\zeta$ on $[K-1,K]$, where $K\geq 2$:
\be\label{ET+0}
\begin{split}
\int_{K-1}^K\int_{\mathcal{S}_\zeta}|\nabla \bl{w}|^2dxd\zeta\leq C&\lt(\int_{\Upsilon_K^+}|\bl{w}|(|\na \bl{w}|+|\bl{w}|^2)dx+\int_{\Upsilon_K^-}|\bl{w}|(|\na \bl{w}|+|\bl{w}|^2)dy\right.\\
&\left.+\Big|\int_{\Upsilon_K^+}\bl{w}\cdot\bl{e_1}\left(\tilde{p}-p\right)dx\Big|+\Big|\int_{\Upsilon_K^-}\bl{w}\cdot\bl{e_1}'\left(\tilde{p}-p\right)dy\Big|\rt).
\end{split}
\ee
 Now we only handle integrations on $\Upsilon_K^+$ since the cases of $\Upsilon_K^-$ are similar. Using the Cauchy-Schwarz inequality and the Poincar\'e inequality Lemma \ref{POIN}, one has
\be\label{ET+1}
\int_{\Upsilon_K^+}|\bl{w}||\na \bl{w}|dx\leq\|\bl{w}\|_{L^2(\Upsilon_K^+)}\|\nabla \bl{w}\|_{L^2(\Upsilon_K^+)}\leq C\|\nabla \bl{w}\|^2_{L^2(\Upsilon_K^+)}.
\ee
Moreover, by H\"older's inequality and the Gagliardo-Nirenberg inequality, one writes
\[
\int_{\Upsilon_K^+}|\bl{w}|^3dx\leq C\left(\|\bl{w}\|_{L^2(\Upsilon_K^+)}^{2}\|\na \bl{w}\|_{L^2(\Upsilon_K^+)}+\|\bl{w}\|_{L^2(\Upsilon_K^+)}^{3}\right),
\]
which follows by the Poincar\'e inequality that
%\be\label{ET+1.5}
\[
\int_{\Upsilon_K^+}|\bl{w}|^3dx\leq C\|\na \bl{w}\|_{L^2(\Upsilon_K^+)}^{3}.
\]
Recalling Proposition \ref{P2.4}, one arrives at
\be\label{ET+2}
\left|\int_{\Upsilon_K^+}w_3\left(\tilde{p}-p\right)dx\right|\leq C\left(\|\bl{u}\|_{L^4(\Upsilon_K^+)}\|\nabla \bl{w}\|^2_{L^2(\Upsilon_K^+)}+\|\nabla \bl{w}\|_{L^2(\Upsilon_K^+)}^2+\|\nabla \bl{w}\|_{L^2(\Upsilon_K^+)}^3\right).
\ee
Substituting \eqref{ET+1}--\eqref{ET+2}, together with their related inequality on domain $\Upsilon_K^{-}$, in \eqref{ET+0}, one concludes
\be\label{FEST}
\int_{K-1}^K\int_{\mathcal{S}_\zeta}|\nabla \bl{w}|^2dxd\zeta\leq C\left(\|\nabla \bl{w}\|_{L^2(\Upsilon_K^+\cup\Upsilon_K^-)}^2+\|\nabla \bl{w}\|_{L^2(\Upsilon_K^+\cup\Upsilon_K^-)}^3\right).
\ee
\subsubsection{End of proof}

\q\ Finally, by defining
\[
Y(K):=\int_{K-1}^K\int_{\mathcal{S}_\zeta}|\nabla \bl{w}|^2dxd\zeta,
\]
\eqref{FEST} indicates
\[
Y(K)\leq C\left(Y'(K)+\left(Y'(K)\right)^{3/2}\right),\q\forall K\geq 1.
\]
By Lemma \ref{LEM2.3}, we derive
\[
\liminf_{\zeta\to\infty}K^{-3}Y(K)>0,
\]
that is, there exists $C_0>0$ such that
\[
\int_{K-1}^K\int_{\mathcal{S}_\zeta}|\nabla \bl{w}|^2dxd\zeta\geq C_0K^3.
\]
However, this leads to a paradox with the condition \eqref{sesti}. Thus $Y(K)\equiv0$ for all $K\geq 1$, which proves $\bl{u}\equiv \tilde{\bl{u}}$. This concludes the uniqueness.

\qed

\section{Asymptotic and regularity of the weak solution}\label{SECH}

\subsection{Decay estimate of the weak solution}

In this subsection we will show the weak solution constructed in the previous section decays exponentially to Poiseuille flows \eqref{PF} as $|x|\to\i$. Our proof is also valid for stationary Navier-Stokes problem on domains which is less regular, say an infinite pipe only with a $C^{1,1}$ boundary.

For the convenience of our further statement, we localize the problem in the following way: Denoting
\be\label{CUTFRAK}
\mS=\bigcup_{k\in\mathbb{Z}}\mathfrak{S}_k,
\ee
where
\[
\mathfrak{S}_k:=\left\{
\begin{array}{ll}
\mS\cap\left\{x\in\mathbb{R}^2:\,\left(\f{3k}{2}-1\right)Z_\Phi\leq x_1\leq\left(\f{3k}{2}+1\right)Z_\Phi\right\},
&\q k>0;\\[1.5mm]
\mS_{Z_\Phi},&\q k=0;\\[1.5mm]
\mS\cap\left\{x\in\mathbb{R}^2:\,\left(\f{3k}{2}-1\right)Z_\Phi\leq y_1\leq\left(\f{3k}{2}+1\right)Z_\Phi\right\},
&\q k<0,\\
\end{array}
\right.
\]
where $Z_{\Phi}=e^{2/\ve}\leq e^{C\Phi}$, while $\ve>0$ is a fixed small constant given in the construction of $\bl{a}$. Here is the main result of this subsection:

% since we should never worry about the regularity of the solution, nor the asymptotic behavior of the quantities of the solution on a cross section of the pipe that far away from the origin\footnote{For example, the estimate of 2nd term on the right hand side of \eqref{RH} would be more convenient.}.

\begin{proposition}\label{PROP5.1}
Let the conditions of the item (ii) in Theorem \ref{PRO1.2} be satisfied and $(\bl{v},\,\Pi)$ is given in \eqref{EEPPSS} and \eqref{DEFPI}. Then there exist positive constants $C$, $\sigma$, depending only on $\Phi$, such that
\be\label{ASYP}
\begin{split}
\left\|\bl{u}-\boldsymbol{P}^L_{\Phi}\right\|_{H^1(\mS_L\backslash\mS_\zeta)}+\left\|\bl{u}-\boldsymbol{P}^R_{\Phi}\right\|_{H^1(\mS_R\backslash\mS_\zeta)}&\leq C\|\bl{v}\|_{H^1(\mS)}\exp(-\sigma\zeta),\\
\end{split}
\ee
for any $\zeta$ being large enough.
\end{proposition}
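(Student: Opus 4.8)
Since $\bl{a}\equiv\boldsymbol{P}^i_\Phi$ in $\mS_i\backslash\mS_{e^{2/\ve}}$ by \eqref{EADF}, for every $\zeta>e^{2/\ve}$ one has $\bl{u}-\boldsymbol{P}^i_\Phi=\bl{v}$ on $\mS_i\backslash\mS_\zeta$, so \eqref{ASYP} is equivalent to $\|\bl{v}\|_{H^1(\mS\backslash\mS_\zeta)}\leq C\|\bl{v}\|_{H^1(\mS)}e^{-\sigma\zeta}$. I set $G(\zeta):=\int_{\mS\backslash\mS_\zeta}|\na\bl{v}|^2\,dx$, which is finite (as $\bl{v}\in H^1(\mS)$), nonincreasing, absolutely continuous, with $-G'(\zeta)=\int_{\{x_1=\zeta\}}|\na\bl{v}|^2dx_2+\int_{\{y_1=-\zeta\}}|\na\bl{v}|^2dy_2$ for a.e.\ $\zeta$. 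The plan is to force $G$ to decay geometrically via a Saint-Venant (energy in a tail) argument, exactly paralleling the uniqueness proof but with $\bl{v}=\bl{u}-\bl{a}$ and the modified pressure $\Pi$ of \eqref{DEFPI} in place of $\bl{w}$ and $\tilde p-p$.

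\textbf{The tail energy identity.} For $\zeta>e^{2/\ve}$ the equation for $\bl{v}$ restricted to $\mS\backslash\mS_\zeta$ is $\bl{v}\cdot\na\bl{v}+\boldsymbol{P}^i_\Phi\cdot\na\bl{v}+\bl{v}\cdot\na\boldsymbol{P}^i_\Phi+\na\Pi-\Dl\bl{v}=0$ (the flux-carrier source $\Dl\bl{a}-\bl{a}\cdot\na\bl{a}$ being the gradient of a linear function there, absorbed into $\Pi$). Multiplying by $\bl{v}$ and integrating over $\mS_{\zeta'}\backslash\mS_\zeta$ — rigorously by inserting a cutoff in $x_1$, using $\Pi\in L^2_{\mathrm{loc}}(\overline{\mS})$ from Theorem~\ref{THM3.8} and $\mathrm{div}\,\bl{v}=0$, then letting $\zeta'\to\infty$ along a sequence on which all sectional integrands (which are $L^1$ in $\zeta'$) vanish — and integrating by parts using $\mathrm{div}\,\bl{v}=0$, the Navier-slip condition on $\p\mS$, and $\bl{v}\cdot\bl{n}=0$, one gets
\[
2\int_{\mS\backslash\mS_\zeta}|\mathbb{S}\bl{v}|^2dx+\al\int_{\p\mS\backslash\mS_\zeta}|\bl{v}_{\mathrm{tan}}|^2dS=B(\zeta)-\int_{\mS\backslash\mS_\zeta}v_1v_2(P^i_\Phi)'\,dx,
\]
where $B(\zeta)$ collects the boundary integrals over $\{x_1=\zeta\}$ and $\{y_1=-\zeta\}$, with integrands controlled by $|\bl{v}||\na\bl{v}|+|\bl{v}|^3+|v_1\Pi|+P^i_\Phi|\bl{v}|^2$. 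On $\mS\backslash\mS_\zeta$ (two straight half-strips) the identity $\int|\mathbb{S}\bl{v}|^2=\int|\na\bl{v}|^2+(\text{sections at }\zeta)$ follows from a direct integration by parts, the wall terms dropping since $v_2=0$ (hence $\p_{x_1}v_2=0$) on the walls; one may equally invoke Lemma~\ref{lem-korn}. Combining with the Poincar\'e inequality in straight strips (Lemma~\ref{POIN}), and noting $\|(\boldsymbol{P}^i_\Phi)'\|_{L^\infty}\leq C\Phi$ is small enough (by the hypotheses of Theorem~\ref{PRO1.2}(ii)) to absorb $C\|(\boldsymbol{P}^i_\Phi)'\|_{L^\infty}\|\bl{v}\|^2_{L^2(\mS\backslash\mS_\zeta)}$ into $G(\zeta)$, one arrives at
\[
G(\zeta)\leq C\!\int_{\{x_1=\zeta\}}\!\big(|\bl{v}||\na\bl{v}|+|\bl{v}|^3+|v_1\Pi|+\Phi|\bl{v}|^2\big)dx_2+C\!\int_{\{y_1=-\zeta\}}\!\big(\cdots\big)dy_2.
\]

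\textbf{Slab integration, the pressure, and conclusion.} Integrating this over $\zeta\in[K-1,K]$ with $K-1>e^{2/\ve}$ converts every sectional integral into a slab integral over $\Upsilon_K^{\pm}$. By the $1$D Poincar\'e inequality ($v_1$ has zero mean on each section, $v_2$ has zero trace on the walls) and Gagliardo-Nirenberg, exactly as in the uniqueness proof, $\int_{\Upsilon_K^{\pm}}|\bl{v}||\na\bl{v}|\leq C\|\na\bl{v}\|^2_{L^2(\Upsilon_K^{\pm})}$, $\int_{\Upsilon_K^{\pm}}|\bl{v}|^3\leq C\|\na\bl{v}\|^3_{L^2(\Upsilon_K^{\pm})}$, and $\int_{\Upsilon_K^{\pm}}\Phi|\bl{v}|^2\leq C\|\na\bl{v}\|^2_{L^2(\Upsilon_K^{\pm})}$; for the pressure term one uses $\int_{\Upsilon_K^{\pm}}v_1\,dx=0$, solves $\mathrm{div}\,\bl{V}=v_1$ with $\bl{V}\in H^1_0(\Upsilon_K^{\pm})$ (Lemma~\ref{LEM2.1}), and repeats the argument of Proposition~\ref{P2.4} (now with $\Pi$, $\bl{v}$, $\boldsymbol{P}^i_\Phi$ in the roles of $\tilde p-p$, $\bl{w}$, $\bl{u}$) to get $\int_{\Upsilon_K^{\pm}}|v_1\Pi|\leq C(\|\na\bl{v}\|^2_{L^2(\Upsilon_K^{\pm})}+\|\na\bl{v}\|^3_{L^2(\Upsilon_K^{\pm})})$. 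Since $\|\na\bl{v}\|^2_{L^2(\Upsilon_K^+\cup\Upsilon_K^-)}=G(K-1)-G(K)$ and $\int_{K-1}^K G\geq G(K)$, this yields
\[
G(K)\leq C\big(G(K-1)-G(K)\big)+C\big(G(K-1)-G(K)\big)^{3/2},\qquad K-1>e^{2/\ve}.
\]
As $G(K-1)-G(K)\leq G(K-1)\to0$, for $K$ large the cubic term is absorbed into the linear one, giving $G(K)\leq q\,G(K-1)$ with $q=q(\Phi)\in(0,1)$; iterating and using $G\leq\|\bl{v}\|^2_{H^1(\mS)}$ gives $G(\zeta)\leq C\|\bl{v}\|^2_{H^1(\mS)}e^{-2\sigma\zeta}$ for large $\zeta$, $\sigma=\tfrac12\log(1/q)>0$. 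Lemma~\ref{POIN} then gives $\|\bl{v}\|_{L^2(\mS\backslash\mS_\zeta)}\leq C\sqrt{G(\zeta)}$, and the reduction in the first paragraph yields \eqref{ASYP}.

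\textbf{Main obstacle.} The genuinely delicate point is the pressure term $\int v_1\Pi$: it cannot be estimated sectionwise (that would require $\p_{x_2}\Pi\in L^2$ on a section, i.e.\ second derivatives of $\bl{v}$, unavailable at this stage), which is precisely why one must integrate in $\zeta$ and invoke the Bogovski\u\i/De~Rham argument of Proposition~\ref{P2.4}. A secondary point is the absorption step, which relies on the smallness of $\Phi$ (equivalently of $C_0$) from Theorem~\ref{PRO1.2}(ii) and on the $\al$-uniform Korn and Poincar\'e constants; and one must only claim the sectional traces and $1$D inequalities for a.e.\ $\zeta$, which is harmless once integrated.
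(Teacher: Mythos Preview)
Your argument is correct and follows the same Saint--Venant philosophy as the paper, but the final conversion to exponential decay is organized differently. After reaching the sectional inequality $G(\zeta)\leq C\int_{\{x_1=\zeta\}\cup\{y_1=-\zeta\}}(\cdots)$, the paper integrates in $\zeta'$ over the \emph{infinite} interval $(\zeta,\infty)$, handles the pressure by summing the Prop.~\ref{P2.4}--type bound over the countably many unit slabs $\Upsilon_{\zeta+m}^\pm$, and arrives at the continuous differential inequality $\mathcal N(\zeta)\leq -C\,\mathcal N'(\zeta)$ for $\mathcal N(\zeta)=\int_\zeta^\infty G$; Gronwall then gives the decay. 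You instead integrate over a single unit window $[K-1,K]$, apply Prop.~\ref{P2.4} once on $\Upsilon_K^\pm$, and obtain the discrete recursion $G(K)\leq C(G(K-1)-G(K))+C(G(K-1)-G(K))^{3/2}$, which you iterate. Both routes are standard and equivalent; yours is closer in spirit to the paper's uniqueness proof, and avoids the infinite summation over slabs.

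One point where the paper is more explicit than you: the passage $\zeta'\to\infty$. Your parenthetical ``all sectional integrands (which are $L^1$ in $\zeta'$)'' is correct for the velocity terms, but for the pressure term $\int_0^1 v_1\Pi\,dx_2$ it is \emph{not} immediate from $\Pi\in L^2_{\mathrm{loc}}$ alone, since $\Pi$ need not lie in $L^2(\mS)$. The paper closes this by the refined slab estimate of Lemma~\ref{RMKK54}, $\sum_k\|\Pi-\overline\Pi_{\mathfrak S_k}\|_{L^2(\mathfrak S_k)}^2<\infty$, combined with the zero-mean property $\int_0^1 v_1\,dx_2=0$ (so one may subtract $\overline\Pi_{\mathfrak S_k}$ on each slice), and then selects slices $\zeta_{1,k}$ by a pigeonhole argument. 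Your argument goes through once you invoke Lemma~\ref{RMKK54} at this step; without it the $L^1$ claim for the pressure sectional integral is unjustified.
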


\qed

During the proof of Proposition \ref{PROP5.1}, we need the following refined estimate of the pressure field:

\begin{lemma}\label{RMKK54}
The reformulated pressure field $\Pi$ given in \eqref{DEFPI} enjoys the following uniform estimate:
\[
\sum_{k\in\mathbb{Z}}\|\Pi-\overline{\Pi}_{\mathfrak{S}_k}\|^2_{L^2(\mathfrak{S}_k)}\leq \Phi^2 e^{C\Phi}<\i.
\]
\end{lemma}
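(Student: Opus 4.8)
The plan is to derive a local $L^2$ bound for $\Pi - \overline{\Pi}_{\mathfrak{S}_k}$ on each block $\mathfrak{S}_k$ from the equation \eqref{DEFPi} satisfied by $\Pi$, using Lemma \ref{LEM312} (the $L^2$ bound for a pressure in terms of the $H^{-1}$ norm of its gradient), and then sum over $k\in\mathbb{Z}$. Since the blocks $\mathfrak{S}_k$ have a fixed geometry (all straight pieces are translates of a fixed box once $|k|$ is large, and $\mathfrak{S}_0$ is a fixed bounded Lipschitz domain), the constant $C_{\mathfrak{S}_k}$ in \eqref{EEEE0} can be taken uniform in $k$. Applying Lemma \ref{LEM312} on each $\mathfrak{S}_k$ to the distribution $\Pi$ whose gradient equals the left-hand side of \eqref{DEFPi}, we obtain
\[
\|\Pi-\overline{\Pi}_{\mathfrak{S}_k}\|_{L^2(\mathfrak{S}_k)}\leq C\big\|\nabla\bl{v}-\bl{v}\otimes\bl{v}-\bl{a}\otimes\bl{v}-\bl{v}\otimes\bl{a}+\Dl\bl{a}+C_R\eta(x_1)\bl{e_1}+C_L\eta(-y_1)\bl{e_1}'-\bl{a}\cdot\nabla\bl{a}\big\|_{H^{-1}(\mathfrak{S}_k)}.
\]
Because $H^{-1}(\mathfrak{S}_k)$ is dominated by $L^2(\mathfrak{S}_k)$ for the zeroth-order terms and by $L^2$ of the field itself for the divergence-form term $\mathrm{div}(\nabla\bl{v}-\bl{v}\otimes\bl{v}-\cdots)$, the right-hand side is bounded by $C\big(\|\nabla\bl{v}\|_{L^2(\mathfrak{S}_k)}+\|\bl{v}\|_{L^4(\mathfrak{S}_k)}^2+\|\bl{a}\|_{L^\infty}\|\bl{v}\|_{L^2(\mathfrak{S}_k)}+\|\Dl\bl{a}+C_R\eta\bl{e_1}+C_L\eta\bl{e_1}'-\bl{a}\cdot\nabla\bl{a}\|_{L^2(\mathfrak{S}_k)}\big)$.

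Next I would square and sum over $k$. The key point is that the cover $\{\mathfrak{S}_k\}$ has bounded overlap (each point of $\mS$ lies in at most two consecutive blocks), so $\sum_k \|\nabla\bl{v}\|_{L^2(\mathfrak{S}_k)}^2\leq 2\|\nabla\bl{v}\|_{L^2(\mS)}^2$, and similarly $\sum_k\|\bl{v}\|_{L^2(\mathfrak{S}_k)}^2\leq 2\|\bl{v}\|_{L^2(\mS)}^2\leq C\|\nabla\bl{v}\|_{L^2(\mS)}^2$ by the Poincar\'e inequality (Corollary \ref{CorPoin}). For the quartic term one uses $\sum_k\|\bl{v}\|_{L^4(\mathfrak{S}_k)}^4\leq 2\|\bl{v}\|_{L^4(\mS)}^4$ together with the 2D Gagliardo–Nirenberg inequality $\|\bl{v}\|_{L^4(\mS)}^2\leq C\|\bl{v}\|_{L^2(\mS)}\|\nabla\bl{v}\|_{L^2(\mS)}$ and Poincar\'e, giving $\sum_k\|\bl{v}\|_{L^4(\mathfrak{S}_k)}^4\leq C\|\nabla\bl{v}\|_{L^2(\mS)}^4$; since $\sum_k a_k^2 \le (\sum_k a_k^2)$ we can bound $\sum_k (\|\bl v\|_{L^4(\mathfrak S_k)}^2)^2$ directly, and $\|\nabla\bl{v}\|_{L^2(\mS)}\leq\Phi e^{C\Phi}$ from \eqref{EOFV627} keeps everything finite. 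Finally, the term $\Dl\bl{a}+C_R\eta\bl{e_1}+C_L\eta(-y_1)\bl{e_1}'-\bl{a}\cdot\nabla\bl{a}$ is smooth with \emph{compact support} inside $\mS_{Z_\Phi}$ (as noted right after \eqref{DEFPI}), so only finitely many blocks $\mathfrak{S}_k$ see it and its contribution is $\leq C\|\Dl\bl{a}\|_{L^2(\mS_{e^{2/\ve}})}^2+C\|\bl{a}\cdot\nabla\bl{a}\|_{L^2(\mS_{e^{2/\ve}})}^2\leq\Phi^2 e^{C\Phi}$ by the estimates on $\bl{a}$ already established in the proof of Theorem \ref{THM3.8}. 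Collecting the pieces yields $\sum_{k}\|\Pi-\overline{\Pi}_{\mathfrak{S}_k}\|_{L^2(\mathfrak{S}_k)}^2\leq \Phi^2 e^{C\Phi}<\infty$.

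The main obstacle I anticipate is bookkeeping the uniformity of the constant in Lemma \ref{LEM312} across all the blocks $\mathfrak{S}_k$: one must check that the Bogovskii-type constant for $\mathfrak{S}_k$ does not degenerate as $|k|\to\infty$. This follows because for $|k|$ large $\mathfrak{S}_k$ is a translate of one of two fixed rectangles, and there are only finitely many "transition" blocks near $k=0$; so a single constant works. A secondary subtlety is that $\Pi$ itself is only known to be in $L^2_{\mathrm{loc}}(\overline{\mS})$ (not globally $L^2$), but since Lemma \ref{LEM312} is applied blockwise and the zero-mean normalization $\overline{\Pi}_{\mathfrak{S}_k}$ is subtracted on each block separately, this is not an issue; one should simply remark that the unbounded linear-in-$x_1$ drift in \eqref{DEFPI} relating $\Pi$ and $p$ is irrelevant after the blockwise mean subtraction because $\eta$ is constant ($\equiv 1$ or $\equiv 0$) on each far-field block, hence the drift is affine there and contributes nothing to $\nabla\Pi$ beyond the $C_R\eta\bl{e_1}$, $C_L\eta\bl{e_1}'$ terms already accounted for.
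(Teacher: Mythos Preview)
Your proposal is correct and follows essentially the same route as the paper: apply Lemma~\ref{LEM312} on each block $\mathfrak{S}_k$, use the congruence of the blocks (each $\mathfrak{S}_k$ is congruent to one of $\mathfrak{S}_{-1},\mathfrak{S}_0,\mathfrak{S}_1$) to make the constant uniform in $k$, estimate $\|\nabla\Pi\|_{H^{-1}(\mathfrak{S}_k)}$ from \eqref{DEFPi} via $\|\bl{v}\|_{H^1(\mathfrak{S}_k)}$ plus the compactly supported $\bl{a}$-terms, and then sum using \eqref{EOFV627}. The paper's write-up is terser (it records only the blockwise bound $\|\Pi-\overline{\Pi}_{\mathfrak{S}_k}\|_{L^2(\mathfrak{S}_k)}\leq C\|\bl{v}\|_{H^1(\mathfrak{S}_k)}(1+\Phi e^{C\Phi}+\|\bl{v}\|_{H^1(\mathfrak{S}_k)})+\Phi e^{C\Phi}\chi_{|k|\leq1}$ and leaves the summation implicit), but your explicit bounded-overlap and global Gagliardo--Nirenberg argument for the quartic term fills in exactly that step.
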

\pf Applying \eqref{EEEE0} in Lemma \ref{LEM312}, one deduces
\be\label{EEEE1}
\|\Pi-\overline{\Pi}_{\mathfrak{S}_k}\|_{L^2(\mathfrak{S}_k)}\leq C_k\|\na\Pi\|_{H^{-1}(\mathfrak{S}_k)}.
\ee
Notice that, each $\mathfrak{S}_k$ ($k\in\mathbb{Z}$) is congruent to an element in $\{\mathfrak{S}_{-1},\,\mathfrak{S}_0,\,\mathfrak{S}_1\}$. This indicates constants $C_k$ in estimates \eqref{EEEE1} above could be chosen uniformly with respect to $k\in\mathbb{Z}$. By equation
\[
\nabla\Pi=\text{div}\big(\nabla\bl{v}-\bl{v}\otimes\bl{v}-\bl{a}\otimes\bl{v}-\bl{v}\otimes\bl{a}\big)+\Dl \bl{a}+{C_{R}}{\eta(x_1)}\bl{e_1}+{C_{L}}{\eta(-y_1)}\bl{e_1}'-\bl{a}\cdot\nabla \bl{a},
\]
with both $\Dl \bl{a}+{C_{R}}{\eta(x_1)}\bl{e_1}+{C_{L}}{\eta(-y_1)}\bl{e_1}'$ and $\bl{a}\cdot\nabla \bl{a}$ vanish in $\mathfrak{S}_k$ with $|k|\geq 2$, one concludes from \eqref{EEEE1} that
\[
\begin{split}
\|\Pi-\overline{\Pi}_{\mathfrak{S}_k}\|_{L^2(\mathfrak{S}_k)}&\leq C\left(\|\na\bl{v}\|_{L^2(\mathfrak{S}_k)}+\|\bl{v}\|_{L^4(\mathfrak{S}_k)}^2+\Phi e^{C\Phi}\|\bl{v}\|_{L^2(\mathfrak{S}_k)}\right)+\Phi e^{C\Phi}\chi_{|k|\leq1}\\
&\leq C\|\bl{v}\|_{H^1(\mathfrak{S}_k)}\left(1+\Phi e^{C\Phi}+\|\bl{v}\|_{H^1(\mathfrak{S}_k)}\right)+\Phi e^{C\Phi}\chi_{|k|\leq1}.
\end{split}
\]
Here we have applied the Sobolev imbedding theorem and interpolations of $L^p$ spaces. This completes the proof of Lemma \ref{RMKK54}.

\qed

{\noindent\bf Proof of Proposition \ref{PROP5.1}: }
We only prove the estimate of term $\|\bl{u}-\boldsymbol{P}^R_{\Phi}\|_{H^1(\mS_R\backslash\mS_\zeta)}$ since the rest term is essentially identical. For $\zeta>Z_\Phi$, in $\mS_R\backslash\mS_\zeta$, the equation of $\bl{v}=\bl{u}-\bl{a}$ reads
\be\label{EW}
\bl{v}\cdot\nabla \bl{v}+\bl{a}\cdot\nabla\bl{v}+\bl{v}\cdot\na\bl{a}+\nabla\Pi-\Dl\bl{v}=0.
\ee
This is because
\[
\Dl \bl{a}+{C_{R}}{\eta(x_3)}\bl{e_1}+{C_{L}}{\eta(-y_1)}\bl{e_1}'-\bl{a}\cdot\nabla\bl{a}=\left(\Dl {P}^R_{\Phi}+{C_{L}}\right)\bl{e_1}=0,\q\text{in}\q\mS_R\backslash\mS_\zeta.
\]
In the following proof, we will drop (upper or lower) indexes ``$R$" for convenience. For any $Z_\Phi<\zeta\leq\zeta'<\zeta_1$, taking inner product with $\bl{v}$ on both sides of \eqref{EW} and integrating on $\mS_R\cap(\mS_{\zeta_1}\backslash\mS_{\zeta'})$, one has
\be\label{Maint000}
\un{\int_0^1\int_{\zeta'}^{\zeta_1}\bl{v}\cdot\Dl \bl{v}dx_1dx_2}_{LHS}=\un{\int_0^1\int_{\zeta'}^{\zeta_1}\big(\bl{v}\cdot\nabla \bl{v}+\bl{a}\cdot\nabla \bl{v}+\bl{v}\cdot\nabla \bl{a}+\nabla\Pi\big)\cdot\bl{v}dx_1dx_2}_{RHS}.
\ee
To handle the left hand side of \eqref{Maint000}, one first recalls the derivation of \eqref{Maint1} that
\[
\begin{split}
\int_0^1\int_{\zeta'}^{\zeta_1}\bl{v}\cdot\Dl\bl{v}dx_1dx_2=&-2\int_0^1\int_{\zeta'}^{\zeta_1}|\mathbb{S} \bl{v}|^2dx_1dx_2-\al\int_{\zeta'}^{\zeta_1}|\bl{v}_{tan}|^2\Big|_{x_2=1}dx_1-\al\int_{\zeta'}^{\zeta_1}|\bl{v}_{tan}|^2\Big|_{x_2=0}dx_1\\
&-\sum_{i=1}^2\int_0^1v_i(\p_{x_1}v_i+\p_{x_i}v_1)\Big|_{x_1=\zeta'}dx_2+\sum_{i=1}^2\int_0^1v_i(\p_{x_1}v_i+\p_{x_i}v_1)\Big|_{x_1=\zeta_1}dx_2.
\end{split}
\]
Applying Lemma \ref{lem-korn}, the Korn's inequality in a truncated stripe, one deduces the left hand side of \eqref{Maint000} satisfies
\be\label{LH}
\begin{split}
LHS\leq&C\left(-\int_0^1\int_{\zeta'}^{\zeta_1}|\na \bl{v}|^2dx_1dx_2+\int_0^1|\bl{v}||\na\bl{v}|\Big|_{x_1=\zeta'}dx_2+\int_0^1|\bl{v}||\na\bl{v}|\Big|_{x_1=\zeta_1}dx_2\right)
\end{split}
\ee
Using integration by parts for the right hand side of \eqref{Maint000}, one arrives
\be\label{RH}
\begin{split}
RHS=&\int_0^1\left(\f{1}{2}\left(v_1+P_\Phi\right)|\bl{v}|^2+v_1\Pi+P_\Phi(v_1)^2\right)\Big|_{x_1=\zeta_1}dx_2\\
&-\int_0^1\left(\f{1}{2}\left(v_1+P_\Phi\right)|\bl{v}|^2+v_1\Pi+P_\Phi(v_1)^2\right)\Big|_{x_1=\zeta'}dx_2\\
&-\int_0^1\int_{\zeta'}^{\zeta_1}\bl{v}\cdot\na \bl{v}\cdot \bl{a} dx_1dx_2.
\end{split}
\ee
Now we are ready to perform $\zeta_1\to\i$. To do this, one must be careful with the integrations on $\{x_1=\zeta_1\}\times(0,1)$ in both \eqref{LH} and \eqref{RH}. Recalling estimates of $(\bl{v},\Pi)$ in Theorem \ref{THM3.8} and Lemma \ref{RMKK54}, one derives
\be\label{WESTTT}
\|\bl{v}\|^2_{H^1(\mS)}+\|\bl{v}\|^4_{L^4(\mS)}+\sum_{k\in\mathbb{Z}}\|\Pi-\overline{\Pi}_{\mfS_k}\|^2_{L^2(\mfS_k)}\leq \Phi^2 e^{C\Phi}<\i.
\ee
Choosing $M:=\f{\Phi^2e^{C\Phi}}{Z_\Phi}$, one concludes that for any $k>1$, there exists a slice $\{x_1=\zeta_{1,k}\}\times(0,1)$ which satisfies
\[
\{x_1=\zeta_{1,k}\}\times(0,1)\subset\mS\cap\left\{x\in\mathbb{R}^2:\,\left(\f{3k}{2}-\f{1}{2}\right)Z_\Phi\leq x_1\leq\left(\f{3k}{2}+\f{1}{2}\right)Z_\Phi\right\}\subset\mfS_k,
\]
and it holds that
\[
\int_0^1\left(|\na\bl{v}|^2+|\bl{v}|^4+|\Pi-\overline{\Pi}_{\mfS_k}|^2\right)\Big|_{{x_1}=\zeta_{1,k}}dx_2\leq M.
\]
Otherwise, one has
\[
\|\bl{v}\|^2_{H^1(\mfS_k)}+\|\bl{v}\|^4_{L^4(\mfS_k)}+\|\Pi-\overline{\Pi}_{\mfS_k}\|^2_{L^2(\mfS_k)}{>Z_\Phi M=\Phi ^2e^{C\Phi}},
\]
which creates a paradox to \eqref{WESTTT}. Choosing $k_0>0$ being sufficiently large such that the sequence $\{\zeta_{1,k}\}_{k=k_0}^\i\subset [\zeta',\i)$, clearly one has $\zeta_{1,k}\nearrow\i$ as $k\to\i$. Moreover, using the trace theorem of functions in the Sobolev space $H^1$, one has
\[
\int_0^1|\bl{v}(x_1,x_2)|^2dx_2\leq C\int_{z>x_1}\int_0^1(|\bl{v}|^2+|\nabla \bl{v}|^2)(z,x_2)dx_2dz\to 0,\q\text{as}\q x_1\to\i.
\]
Noting that $\int_0^1v_1(\zeta_{1,k},x_2)dx_2=0$ for $k\geq k_0$, we deduce the following by the Poincar\'e inequality:
\[
\begin{split}
\left|\int_0^1v_3\Pi\Big|_{x_1=\zeta_{1,k}} dx_2\right|&=\left|\int_0^1v_3\left(\Pi-\overline{\Pi}_{\mfS_k}\right)\Big|_{x_1=\zeta_{1,k}} dx_2\right|\\
&\leq \left(\int_0^1|\bl{v}|^2\Big|_{x_1=\zeta_{1,k}}dx_2\right)^{1/2}\left(\int_0^1|\Pi-\overline{\Pi}_{\mfS_k}|^2\Big|_{x_1=\zeta_{1,k}}dx_2\right)^{1/2}\to 0,\q\text{as}\q k\to\i.
\end{split}
\]
Meanwhile, one finds
\[
\begin{split}
&\int_{\Sigma\times\{x_3=\zeta_{1,k}\}}|\bl{v}|\left(|\nabla\bl{v}|+|\bl{v}|^2\right)\Big|_{x_1=\zeta_{1,k}}dx_2\\
&\leq \left(\int_0^1\left(|\na\bl{v}|^2+|\bl{v}|^4\right)\Big|_{x_1=\zeta_{1,k}}dx_2\right)^{1/2}\left(\int_0^1|\bl{v}|^2\Big|_{x_1=\zeta_{1,k}}dx_2\right)^{1/2}\to 0,\q\text{as}\q k\to\i;
\end{split}
\]
and
\[
\int_0^1|P_\Phi||\bl{v}|^2\Big|_{x_1=\zeta_{1,k}}dx_2\leq\|P_\Phi\|_{L^\i(\mS_R)}\int_0^1|\bl{v}|^2\Big|_{x_1=\zeta_{1,k}}dx_2\to0,\q\text{as}\q k\to\i.
\]
Choosing $\zeta_1=\zeta_{1,k}$ ($k\geq k_0$) in \eqref{LH} and \eqref{RH}, respectively, and performing $k\to\i$, one can deduce that
\[
\begin{split}
\int_0^1\int_{\zeta'}^\i|\na \bl{v}|^2dx\leq&\,\un{C\int_0^1\int_{\zeta'}^\i\bl{v}\cdot\na \bl{v}\cdot \bl{a} dx}_{R_1}\\
&+C\int_0^1\Big(|\bl{v}|\left(|\bl{v}|^2+|P_\Phi||\bl{v}|+|\nabla\bl{v}|\right)+v_1\Pi\Big)\Big|_{x_1=\zeta'}dx_2.
\end{split}
\]
Using the Cauchy-Schwarz inequality, the Poincar\'e inequality in Lemma \ref{POIN}, and the construction of profile vector $\bl{a}$, one derives
\[
R_1\leq C\|P_\Phi\|_{L^\i(\mS_R)}\left(\int_0^1\int_{\zeta'}^\i|\na \bl{v}|^2dx\right)^{1/2}\left(\int_0^1\int_{\zeta'}^\i|\bl{v}|^2dx\right)^{1/2}\leq \f{C\al\Phi}{1+\al}\int_0^1\int_{\zeta'}^\i|\na \bl{v}|^2dx,
\]
which indicates the following estimate provided $\al\Phi$ is small enough such that $\f{C\al\Phi}{1+\al}<1$:
\be\label{MEST}
\int_0^1\int_{\zeta'}^\i|\na \bl{v}|^2dx\leq C\int_0^1\Big(|\bl{v}|\left(|\bl{v}|^2+|P_\Phi||\bl{v}|+|\nabla\bl{v}|\right)+v_3\Pi\Big)\Big|_{x_1=\zeta'}dx_2.
\ee
%Performing $\zeta_1\to\i$, and using H\"older inequality together with Poinacr\'e inequality, one arrives
%\be\label{RH1}
%RHS\geq -C_0|\Phi|\int_{(-\i,\-\zeta']\times\Sigma}|\na \bl{w}|^2dx+\int_{\mS\cap\{x_3=-\zeta'\}}\left(\f{1}{2}\left(w_3+P_\Phi\right)|\bl{w}|^2+w_3\Pi+P_\Phi(w_3)^2\right)\Big|_{x_1=\zeta'}\Big|_{x_1=\zeta'}\Big|_{x_1=\zeta'}dx_2.
%\ee
Denoting
\be\label{DEFG}
\mathcal{G}(\zeta'):=\int_0^1\int_{\zeta'}^\i|\na \bl{v}|^2dx,
\ee
and integrating \eqref{MEST} with $\zeta'$ on $(\zeta,\i)$, one arrives
\be\label{EEP0}
\int_\zeta^\i\mathcal{G}(\zeta')d\zeta'\leq C\left(\int_0^1\int_\zeta^\i\Big(|\bl{v}|\left(|\bl{v}|^2+|P_\Phi||\bl{v}|+|\nabla\bl{v}|\right)\Big)dx+\left|\int_0^1\int_\zeta^\i v_1\Pi dx\right|\right).
\ee
Applying the Poincar\'e inequality in Lemma \ref{POIN}, one deduces
\be\label{EEP1}
\int_0^1\int_\zeta^\i|\bl{v}|\left(|\bl{v}|^2+|P_\Phi||\bl{v}|+|\nabla\bl{v}|\right)dx\leq C\int_0^1\int_\zeta^\i|\nabla\bl{v}|^2dx.
\ee
Moreover, using a similar approach as in the proof of Proposition \ref{P2.4}, one notices that
\be\label{EEP2}
\begin{split}
\left|\int_0^1\int_\zeta^\i v_1\Pi dx\right|&\leq\sum_{m=1}^\i\left|\int_{\Upsilon_{\zeta+m}^+}v_1\Pi dx\right|\\
&\leq C\sum_{m=1}^\i\left(\|P_\Phi\|_{L^\infty(\Upsilon^{+}_{\zeta+m})}\|\nabla \bl{v}\|^2_{L^2(\Upsilon^{+}_{\zeta+m})}+\|\nabla \bl{v}\|_{L^2(\Upsilon^{+}_{\zeta+m})}^2+\|\nabla \bl{v}\|_{L^2(\Upsilon^{+}_{\zeta+m})}^3\right)\\
&\leq C\int_0^1\int_\zeta^\i|\nabla\bl{v}|^2dx.
\end{split}
\ee
Substituting \eqref{EEP1} and \eqref{EEP2} in \eqref{EEP0}, one arrives at
\[
\int_\zeta^\i\mathcal{G}(\zeta')d\zeta'\leq C\mathcal{G}(\zeta),\q\text{for any}\q\zeta>Z_\Phi.
\]
%\[
%(1-C_0|\Phi|)\int_\zeta^\i\mathcal{G}(\zeta')d\zeta'\leq C\mathcal{G}(\zeta)-\f{1}{2}\mathcal{G}'(\zeta).
%\]
This implies
\[
\mathcal{N}(\zeta):=\int_\zeta^\i\mathcal{G}(\zeta')d\zeta'
\]
is well-defined for all $\zeta>Z_\Phi$, and
\be\label{EEP3}
\mathcal{N}(\zeta)\leq-C\mathcal{N}'(\zeta),\q\text{for any}\q\zeta>Z_\Phi.
\ee
Multiplying the factor $e^{C^{-1}\zeta}$ on both sides of \eqref{EEP3} and integrating on $[Z_\Phi,\zeta]$, one deduces
\[
\mathcal{N}(\zeta)\leq C\exp\left(-C^{-1}\zeta\right),\q\text{for any}\q\zeta>Z_\Phi.
\]
According to the definition \eqref{DEFG}, one has $\mathcal{G}$ is both non-negative and non-increasing. Thus
\[
\mathcal{G}(\zeta)\leq\int_{\zeta-1}^\zeta\mathcal{G}(\zeta')d\zeta'\leq\mathcal{N}(\zeta-1)\leq C\exp\left(-C^{-1}\zeta\right),\q\text{for any}\q\zeta>Z_\Phi+1.
\]
This completes the proof of the \eqref{ASYP} by choosing $\sigma=C^{-1}$.

\qed

\subsection{Higher-order regularity of weak solutions}
\subsubsection{$\bl{H^m}$-estimates of weak solutions}\label{SEC521}
Given an arbitrary $\phi\in C_c^\i(\overline{\mS}\,,\,\mathbb{R})$, with $\phi=0$ on $\p\mS$, direct calculation shows $\bl{\varphi}:=(-\p_{x_2}\phi\,,\,\p_{x_1}\phi)$ defines a well-defined test function in Definition \ref{defws}. By replacing $\bl{\varphi}$ with $(-\p_{x_2}\phi\,,\,\p_{x_1}\phi)$ in \eqref{EQU111}, and denoting $\o=\p_{x_2}v_1-\p_{x_1}v_2$, one deduces
\[
\begin{split}
-\int_{\mS}\o\Dl\phi dx+\int_{\p\mS}(\al-2\kappa)v_{\mathrm{tan}}\f{\p\phi}{\p\bl{n}}dS+\int_{\mS}\bl{v}\cdot\nabla \o\cdot \phi dx=&\int_{\mS}\big(\Dl b-\bl{a}\cdot\nabla b\big)\cdot\phi dx\\
&+\int_{\mS}\left(\bl{v}\cdot\na\bl{a}+\bl{a}\cdot\nabla\bl{v}\right)^{\perp}\cdot\na\phi dx,
\end{split}
\]
where $b=\p_{x_2}a_1-\p_{x_1}a_2$. This implies $\o$ solves the following linear elliptic problem weakly:
\be\label{PCURL}
\left\{
\begin{array}{ll}
-\Dl\o+\bl{v}\cdot\nabla\o=(\Dl b-\bl{a}\cdot\nabla b)-\na\cdot\left(\bl{v}\cdot\na\bl{a}+\bl{a}\cdot\nabla\bl{v}\right)^{\perp}, & \text{in}\q\mS;\\[2mm]
\o=\left(-2\kappa+\al\right)v_{\mathrm{tan}}, & \text{on}\q\p\mS.
\end{array}
\right.
\ee
Here $\bl{v}\in H^1(\mS)$ is treated as a known function solved in Section \ref{SEC3627}, while $\bl{a}$ is the smooth divergence-free flux carrier constructed in Section \ref{SEC32}. %which holds that $\bl{v}\cdot\na\bl{a}+\bl{a}\cdot\nabla\bl{v}$ has compact support.

To study bounds of higher-order norms of the solution, we split the problem \eqref{PCURL} into a sequence of problems on bounded smooth domains. Recall the definition of $\mathfrak{S}_k$ in \eqref{CUTFRAK}, we denote the related cut-off function
\[
\psi_k=\left\{
\begin{array}{ll}
\psi\left(x_1-\f{3kZ_\Phi}{2}\right),&\q\text{for}\q k>0;\\[1mm]
\psi\left(y_1-\f{3kZ_\Phi}{2}\right),&\q\text{for}\q k<0,
\end{array}
\right.
\]
where $\psi$ is a smooth 1D cut-off function that satisfies:
\[
\left\{
\begin{array}{*{2}{ll}}
\mathrm{supp}\,\psi\subset\left[-9Z_\Phi/10\,,9Z_\Phi/10\right];\\
\psi\equiv 1,&\q\text{in }\left[-{4Z_\Phi}/{5}\,,{4Z_\Phi}/{5}\right];\\
0\leq\psi\leq1,&\q\text{in }\left[-Z_\Phi\,,Z_\Phi\right];\\
|\psi^{(m)}|\leq C/Z_\Phi^m\leq C,&\q\text{for }m=1,2. \\
\end{array}
\right.
\]
Meanwhile, $\psi_0$ is a 2D smooth cut-off function that enjoys
\[
\left\{
\begin{array}{*{2}{ll}}
\mathrm{supp}\,\psi_0\subset\mS_{9Z_\Phi/10};\\
\psi\equiv 1,&\q\text{in }\mS_{4Z_\Phi/5};\\
0\leq\psi\leq1,&\q\text{in }\mS_{Z_\Phi};\\
|\psi^{(m)}|\leq C/Z_\Phi^m\leq C,&\q\text{for }m=1,2. \\
\end{array}
\right.
\]
However, domains  $\mathfrak{S}_k$ given in previous subsection are only Lipschitzian, which may cause unnecessary difficulty in deriving higher-order regularity of $\o$. To this end, we introduce $\tilde{\mathfrak{S}}_k$, a bounded smooth domain which contains $\mathfrak{S}_k$, with its boundary $\p\tilde{\mathfrak{S}}_k\supset\p\mathfrak{S}_k\cap\p\mS$. In order to make the constants of specific inequalities (i.e. imbedding inequalities, trace inequalities, Biot-Savart law) on each $\tilde{\mathfrak{S}}_k$ ($k\in\mathbb{Z}$) being uniform, one chooses every $\tilde{\mathfrak{S}}_k$ with $k>0$ to be congruent to $\tilde{\mathfrak{S}}_1$, and every $\tilde{\mathfrak{S}}_k$ with $k<0$ to be congruent to $\tilde{\mathfrak{S}}_{-1}$. This can be guaranteed by the definition of $\mathfrak{S}_k$. By the splitting and constructions above, the ``distorted part" in the middle of the stripe is totally contained in $\mfS_0\subset\tilde{\mfS}_0$, and $\na\psi_k$ are totally supported away from this``distorted part" for each $k\in\mathbb{Z}$.

Multiplying \eqref{PCURL}$_1$ by $\psi_k$, we can convert the problem \eqref{PCURL} to related problem in domain $\tilde{\mfS}_k$, with $k\in\mathbb{Z}$:
\[
\left\{
\begin{array}{ll}
-\Dl\o_k+\bl{v}\cdot\nabla\o_k=\nabla\cdot\bl{F}_k+\bl{f}_k, & \text{in}\q\tilde{\mfS}_k;\\[2mm]
\o_k=g_k, & \text{on}\q\p\tilde{\mfS}_k.
\end{array}
\right.
\]
Here $\o_k=\psi_k\o$, while
\[
\begin{split}
\bl{F}_k=&-\psi_k\left(\bl{v}\cdot\nabla\bl{a}+\bl{a}\cdot\nabla\bl{v}\right)^{\perp}-2\o\nabla\psi_k;\\[2mm]
\bl{f}_k=&\,\,\psi_k\left(\Dl b-\bl{a}\cdot\nabla b\right)+\nabla\psi_k\cdot\left(\bl{v}\cdot\na \bl{a}+\bl{a}\cdot\na \bl{v}\right)^{\perp}+\o\left(\Dl\psi_k+\bl{v}\cdot\nabla\psi_k\right);\\[2mm]
g_k=&\left(-2\kappa+\al\right)\psi_k \bl{v}_{\mathrm{tan}}.
\end{split}
\]
Using Gagliardo-Nirenberg interpolation together with the trace theorem, it is not hard to derive
\be\label{EST0908}
\|\bl{F}_k\|_{L^2(\tilde{\mfS}_k)}+\|\bl{f}_k\|_{L^2(\tilde{\mfS}_k)}+\|g_k\|_{H^{1/2}(\p\tilde{\mfS}_k)}\leq C\|\bl{v}\|_{H^1(\mathfrak{S}_k)}\left(1+\Phi e^{C\Phi}+\|\bl{v}\|_{H^1(\mathfrak{S}_k)}\right),\q\forall k\in\mathbb{Z}.
\ee
Noting that the constant $C$ above is independent with $k$, due to congruent property of domains $\{\tilde{\mfS}_k\}_{k\in\mathbb{Z}}$. Therefore, using the classical theory of elliptic equations and \eqref{EST0908}, one derives
\[
\|\o_k\|_{H^1(\tilde{\mfS}_k)}\leq C\|\bl{v}\|_{H^1(\mathfrak{S}_k)}\left(1+\Phi e^{C\Phi}+\|\bl{v}\|_{H^1(\mathfrak{S}_k)}\right),\q\forall k\in\mathbb{Z}.
\]
Applying the Biot-Savart law, one derives
\[
\|\bl{v}\|_{H^2(\mathfrak{S}_k')}\leq C\left(\|\o_k\|_{H^1(\tilde{\mfS}_k)}+\|\bl{v}\|_{L^2(\mathfrak{S}_k')}\right)\leq C\|\bl{v}\|_{H^1(\mathfrak{S}_k)}\left(1+\Phi e^{C\Phi}+\|\bl{v}\|_{H^1(\mathfrak{S}_k)}\right),\q\forall k\in\mathbb{Z},
\]
where
\[
\mathfrak{S}_k'=\{x\in\mathfrak{S}_k:\,\psi_k=1\}.
\]
This implies, by summing over $k\in\mathbb{Z}$, that
\[
\begin{split}
\|\bl{v}\|^2_{H^2(\mS)}\leq& \sum_{k\in\mathbb{Z}}\|\bl{v}\|^2_{H^2(\mathfrak{S}_k')}\\
\leq&C\sum_{k\in\mathbb{Z}}\|\bl{v}\|^2_{H^1(\mathfrak{S}_k)}\left(1+\Phi e^{C\Phi}+\|\bl{v}\|_{H^1(\mathfrak{S}_k)}\right)^2\\
\leq&C\|\bl{v}\|^2_{H^1(\mS)}\left(1+\Phi^2 e^{C\Phi}+\|\bl{v}\|^2_{H^1(\mS)}\right).\\
\end{split}
\]
This concludes the global $H^2$-regularity estimate of $\bl{v}$. From this, similarly as we derive \eqref{EST0908}, one achieves a ``one-order upper" regularity of $\bl{F}_k$, $\bl{f}_k$ and $g_k$, for any $k\in\mathbb{Z}$, that is
\[
\|\bl{F}_k\|_{H^1(\tilde{\mfS}_k)}+\|\bl{f}_k\|_{H^1(\tilde{\mfS}_k)}+\|g_k\|_{H^{3/2}(\p\tilde{\mfS}_k)}\leq C_{\Phi},
\]
which indicates the $H^3$-regularity of $\bl{v}$. Following this bootstrapping argument, one deduces $\bl{v}$ is smooth and
\[
\|\bl{v}\|_{H^m(\mS)}\leq C_{\Phi,m},\q\forall m\in\mathbb{N}.
\]
This finished the proof of the regularity part of Theorem \ref{THMSA}.

\qed

\subsubsection{Exponential decay of higher-order norms}

Finally, the higher-order regularity and the $H^1$-exponential decay estimate in previous subsection, indicates the higher-order exponential decay. In fact, using Sobolev imbedding, we first need to show the following decay of the solution in $H^m$ norms, with $m\geq 2$:
\[
\begin{split}
&\|\bl{v}\|_{H^m(\mS_L\backslash\mS_\zeta))}+\|\bl{v}\|_{H^m(\mS_R\backslash\mS_\zeta)}\leq C_{\Phi,m}\left(\|\bl{v}\|_{H^1(\mS_L\backslash\mS_{\zeta-Z_\Phi})}+\|\bl{v}\|_{H^1(\mS_R\backslash\mS_{\zeta-Z_\Phi})}\right),\\[1mm]
\end{split}
\]
for all $\zeta>2Z_\Phi$. This is derived by using the method in the proof of Section \ref{SEC521}, but summing over $k\in\mathbb{Z}$ such that
\[
\mathrm{supp}\,\psi_k\cap\left(\mS\backslash\mS_\zeta\right)\neq\varnothing.
\]
Then, the proof is completed by the $H^1$ decay estimate \eqref{ASYP}. This finishes the proof of Theorem \ref{THMSA}.

\qed

\begin{remark}\label{RMMKK43}
For the pressure $p$, there exists two constants $C_{L},\,C_{R}>0$ (See \eqref{POSS1} and \eqref{POSS2}) , and a smooth cut-off function $\eta$ given in \eqref{ETA} such that: For any  $m\geq 0$,
\[
\left\|\na^m\na\left(p+{C_R\int_{-\i}^{x_1}\eta(s)ds}-{C_L\int_{-\i}^{-y_1}\eta(s)ds}\right) \right\|_{L^2(\mS)}\leq C_{\Phi,m}.
\]
Meanwhile, the following pointwise decay estimate holds: for all $|x|>>1$,
\[
\left|\na^m\na \left(p+{C_R\int_{-\i}^{x_1}\eta(s)ds}-{C_L\int_{-\i}^{-y_1}\eta(s)ds}\right)(x)\right|\leq C_{\Phi,m}\exp\left\{-\sigma_{\Phi,m}|x|\right\},
\]
where $C_{\Phi,m}$ and $\sigma_{\Phi,m}$ are positive constants depending on $\Phi$ and $m$. The subtracted term
\[
\pi_{\bl{P}}:=-{C_R\int_{-\i}^{x_1}\eta(s)ds}+{C_L\int_{-\i}^{-y_1}\eta(s)ds}
\]
is set to balance the pressure of the Poiseuille flows.
\end{remark}

\qed

\section*{Data availability statement}
\addcontentsline{toc}{section}{Data availability statement}

\q\ Data sharing is not applicable to this article as no datasets were generated or analysed during the current study.

\section*{Conflict of interest statement}
\addcontentsline{toc}{section}{Conflict of interest statement}
\q\ The authors declare that they have no conflict of interest.

\section*{Acknowledgments}
\addcontentsline{toc}{section}{Acknowledgments}
\q\ Z. Li is supported by Natural Science Foundation of Jiangsu Province (No. BK20200803) and National Natural Science Foundation of China (No. 12001285). X. Pan is supported by National Natural Science Foundation of China (No. 11801268, 12031006). J. Yang is supported by National Natural Science Foundation of China (No. 12001429).

\medskip
\medskip

{\footnotesize

{\sc Z. Li: School of Mathematics and Statistics, Nanjing University of Information Science and Technology, Nanjing 210044, China}

  {\it E-mail address:}  zijinli@nuist.edu.cn

\medskip

 {\sc X. Pan: College of Mathematics and Key Laboratory of MIIT, Nanjing University of Aeronautics and Astronautics, Nanjing 211106, China}

  {\it E-mail address:}  xinghong\_87@nuaa.edu.cn

\medskip

 {\sc J. Yang: School of Mathematics and Statistics, Northwestern Polytechnical University, Xi'an 710129, China}

  {\it E-mail address:} yjqmath@nwpu.edu.cn
}

\begin{thebibliography}{}
\addcontentsline{toc}{section}{References}
\small
\setlength{\itemsep}{-3 pt}
\bibitem{AACG:2021JDE} {\sc T. Acevedo, C. Amrouche, C. Conca and A. Ghosh}: Stokes and Navier-Stokes equations with Navier boundary conditions. {\it J. Differential Equations} 285 (2021), 258--320.

\bibitem{AP:1989SIAM} {\sc K. Ames and L. Payne}: Decay estimates in steady pipe flow. {\it SIAM J. Math. Anal.} 20 (1989), no. 4, 789--815.

\bibitem{Amick:1977ASN} {\sc C. Amick}: Steady solutions of the Navier-Stokes equations in unbounded channels and pipes. {\it Ann. Scuola Norm. Sup. Pisa Cl. Sci.} (4) 4 (1977), no. 3, 473--513.

\bibitem{Amick:1978NATMA} {\sc C. Amick}: Properties of steady Navier-Stokes solutions for certain unbounded channels and pipes. {\it Nonlinear Anal.} 2 (1978), no. 6, 689--720.

\bibitem{Amick-Fraenkel1980} {\sc C. Amick and L. Fraenkel}: Steady solutions of the Navier-Stokes equations representing plane flow in channels of various types. {\it Acta Math.} 2 (1980), no. 1--2, 83--151.

\bibitem{Amrouche2014} {\sc C. Amrouche and A. Rejaiba}: $L^p$-theory for Stokes and Navier-Stokes equations with Navier boundary condition, {\it J. Differential Equations} (256) (2014), no. 4, 1515--1547.

\bibitem{Bme1} {\sc M. Bogovski\u{i}}: Solution of the first boundary value problem for an equation of continuity of an incompressible medium, (Russian) {\it Dokl. Akad. Nauk SSSR} (248) 5 (1979), no. 3, 1037--1040.

\bibitem{Bme2} {\sc M. Bogovski\u{i}}: Solutions of some problems of vector analysis, associated with the operators div and grad, (Russian) {\it Theory of cubature formulas and the application of functional
analysis to problems of mathematical physics, pp. 5--40, 149, Trudy Sem. S. L. Soboleva, No. 1, 1980, Akad. Nauk SSSR Sibirsk. Otdel., Inst. Mat., Novosibirsk}, 1980.


\bibitem{DeRham1960} {\sc G. De Rham}: Differentiable manifolds. Forms, currents, harmonic forms. Translated from the French by F. R. Smith. With an introduction by S. S. Chern, {\it Grundlehren der mathematischen Wissenschaften [Fundamental Principles of Mathematical Sciences], 266. Springer-Verlag, Berlin}, 1984.

\bibitem{Galdi2011} {\sc G. Galdi}: An introduction to the mathematical theory of the Navier-Stokes equations: Steady-state problems, {\it Second edition. Springer Monographs in Mathematics. Springer, New York}, 2011.

\bibitem{Ghosh:2018PHD} {\sc A. Ghosh}: Navier-Stokes equations with Navier boundary condition. Thesis (Ph.D.) Universit\'{e} de Pauet des Pays de Adour, France and Universidad del Pa\'{l}s Vasco, Bilbao. 2018, 168 pp.

%\bibitem{GT1998} {\sc D. Gilbarg and N. Turdinger}: Elliptic partial differential equations of second order, {\it Second edition. Springer-Verlag Berlin Heidelberg}, 2001.

\bibitem{HW:1978SIAM} {\sc C. Horgan and L. Wheeler}: Spatial decay estimates for the Navier-Stokes equations with application to the problem of entry flow. {\it SIAM J. Appl. Math.} 35 (1978), no. 1, 97--116.

\bibitem{KO:1988RMS} {\sc  V.A. Kondrat'ev and O.A. Olenik}, Boundary value problems for a system in elasticity theory in unbounded domains. Korn inequalities. Russ. Math. Surv. 43 (1988), 65--119.

\bibitem{Konie:2006COLLMATH} {\sc P. Konieczny}: On a steady flow in a three-dimensional infinite pipe. {\it Colloq. Math.} 104 (2006), no. 1, 33--56.


\bibitem{Ladyzhenskaya:1959UMN} {\sc O. Lady\v{z}enskaya}: Investigation of the Navier-Stokes equation for stationary motion of an incompressible fluid. (Russian) {\it Uspehi Mat. Nauk} 14 (1959), no. 3, 75--97.

\bibitem{Ladyzhenskaya:1959SPD} {\sc O. Lady\v{z}enskaya}: Stationary motion of viscous incompressible fluids in pipes. {\it Soviet Physics. Dokl.} 124 (4) (1959), 68--70 (551--553 {\it Dokl. Akad. Nauk SSSR}).


\bibitem{Lady-Sol1980} {\sc O. Lady\v{z}enskaya and V. Solonnikov}: Determination of the solutions of boundary value problems for stationary Stokes and Navier-Stokes equations having an unbounded Dirichlet integral, Translated from Zapiski Nauchnykh Seminarov Leningradskogo Otdeleniya Matematicheskogo Instituta im. V. A. Steklova Akad. Nauk SSSR, Vol. 96, pp. 117-160, 1980.

\bibitem{LP:2021ARXIV} {\sc Z. Li X. Pan and J. Yang}: Characterization of bounded smooth solutions to the axially symmetric Navier-Stokes equations in an infinite pipe with Navier-slip boundary. {\it arXiv}: 2110.02445.


\bibitem{LPY:2022ARXIV} {\sc Z. Li X. Pan and J. Yang}: On Leray's problem in an infinite-long pipe with the Navier-slip boundary condition. {\it arXiv:} 2204.10578.

\bibitem{Lions1969} {\sc J. Lions}: Quelques methodes de r\'{e}solution des probl\`{e}mes aux limites non lin\'{e}aires, {\it Dunod, Paris}, 1969.


\bibitem{Mucha2003} {\sc P. Mucha}: On Navier-Stokes equations with silp boundary conditions in an infinite pipe. {\it Acta Appl. Math.} (76) (2003), 1--15.


\bibitem{Mucha:2003STUDMATH} {\sc P. Mucha}: Asymptotic behavior of a steady flow in a two-dimensional pipe. {\it Studia Math.} 158 (2003), no. 1, 39--58.

\bibitem{Navier} {\sc C. Navier}: Sur les lois du mouvement des fuides. {\it Mem. Acad. R. Sci. Inst. France,} 6, (1823), 389--440.

\bibitem{Pileckas:2002MB} {\sc K. Piletskas}: On the asymptotic behavior of solutions of a stationary system of Navier-Stokes equations in a domain of layer type. (Russian) {\it Mat. Sb.} 193 (2002), no. 12, 69--104; translation in {\it Sb. Math.} 193 (2002), no. 11--12, 1801--1836.


\bibitem{SWX:2022ARXIV1} {\sc K. Sha, Y. Wang and C. Xie}: On the Steady Navier-Stokes system with Navier slip boundary conditions in two-dimensional channels {\it arXiv}: 2210.15204.

\bibitem{SWX:2022ARXIV2} {\sc K. Sha, Y. Wang and C. Xie}: On the Leray problem for steady flows in two-dimensional infinitely long channels with slip boundary conditions. {\it arXiv}: 2210.16833.

\bibitem{Temam1984} {\sc R. Temam}: Navier-Stokes equations: Theory and numerical analysis, {\it AMS Chelsea edition. American Mathematical Society $\cdot$ Providence, Rhode Island}, 2001.

\bibitem{WX:2019ARXIV} {\sc Y. Wang and C. Xie}: Uniform structural stability of Hagen-Poiseuille flows in a pipe. {\it Comm. Math. Phys.} 393 (2022), no. 3, 1347--1410.

\bibitem{Wang-Xie2022ARMA}{\sc Y. Wang and C. Xie}: Existence and asymptotic behavior of large axisymmetric solutions for steady Navier-Stokes system in a pipe. {\it Arch. Ration. Mech. Anal.} 243 (2022), no. 3, 1325--1360.

\bibitem{WX:2021ARXIV2}{\sc Y. Wang and C. Xie}: Uniqueness and uniform structural stability of Poiseuille flows in an infinitely long pipe with Navier boundary conditions. {\it arXiv}: 2111.08200.

\bibitem{Watanabe2003} {\sc J. Watanabe}: On incompressible viscous fluid flows with slip boundary conditions. {\it J. Comput. Appl. Math.} 159 (2003), no. 1, 161--172.

\bibitem{YY:2018SIAM} {\sc J. Yang and H. Yin}: On the steady non-Newtonian fluids in domains with noncompact boundaries. {\it SIAM J. Math. Anal.} 50 (2018), no. 1, 283--338.



\end{thebibliography}
\end{document}